\newsavebox{\ssa}
\newtheorem{thm}{Theorem}[subsection]
\newtheorem{prop}[thm]{Proposition}
\newtheorem{lemma}[thm]{Lemma}
\newtheorem{cor}[thm]{Corollary}
\theoremstyle{definition}
\newtheorem{def0}[thm]{Definition}
\theoremstyle{remark}
\newtheorem*{rem}{Remark}
\newtheorem{ex}[thm]{Example}
\newcommand{\lthm}{Łoś's theorem }
\newcommand{\Rep}{\textrm{Rep}}
\newcommand{\Hom}{\textrm{Hom}}
\newcommand{\End}{\textrm{End}}
\newcommand{\Id}{\textrm{Id}}
\newcommand{\Irr}{\textrm{Irr}}
\newcommand{\Aut}{\textrm{Aut}}
\long\def\/*#1*/{}
\title{Finite-dimensional representations of Yangians in complex rank}
\author{Daniil Kalinov\thanks{dkalinov@mit.edu}}
\affil{\footnotesize Department of Mathematics, Massachusetts Institute of Technology, Cambridge, MA 02139, USA}
\affil{\footnotesize Departament of Mathematics, Higher School of Economics, 119048 Moscow, Russia}
\affil{\footnotesize Landau Institute for Theoretical Physics, Chernogolovka, 142432 Moscow Region, Russia}
\date{}
\begin{document}

\maketitle

\begin{abstract}
We classify the "finite-dimensional" irreducible representations of the Yangians $Y(\mathfrak{gl}_t)$ and $Y(\mathfrak{sl}_t)$. These are associative ind-algebras in the Deligne category $\Rep(GL_t)$, which generalize the regular Yangians $Y(\mathfrak{gl}_n)$ and $Y(\mathfrak{sl}_n)$ to complex rank. They were first defined in the paper \cite{etingof2016representation}. Here we solve Problem 7.2 from \cite{etingof2016representation}. We work with the Deligne category $\Rep(GL_t)$ using the ultraproduct approach introduced in  \cite{deligne2007categorie} and \cite{harman2016deligne}. 
\end{abstract}

\tableofcontents

\section*{Introduction}

The study of representation theory in complex rank was initiated by Pierre Deligne. In his paper \cite{deligne2007categorie} he defined the tensor categories $\Rep(S_t)$, $\Rep(GL_t)$, $\Rep(SO_t)$ and $\Rep(Sp_t)$ interpolating the categories of representations of the corresponding groups in finite rank\footnote{Actually the category $\Rep(GL_t)$ appears even earlier in the paper \cite{deligne1982tannakian} by Deligne and Milne and \cite{deligne1990categories} by Deligne}. These categories were studied by Deligne himself and many other authors, for example see \cite{comes2012ideals}, \cite{comes2009blocks}, \cite{knop2006construction}.

Later Etingof in his papers \cite{etingof2014representation}, \cite{etingof2016representation} suggested the methods for interpolating the representation theory of many other algebras connected to $S_n$ or $GL_n$ to complex rank. These papers included many open problems, some of which were  solved, for example see \cite{entova2014schur},\cite{entova2014representations},\cite{harman2016generators} and \cite{sciarappa2015simple}. 

In this paper we study the representation theory of the generalization of the Yangians $Y(\mathfrak{gl}_n)$ and $Y(\mathfrak{sl}_n)$ to complex rank, which were introduced by Etingof in section 7 of \cite{etingof2016representation}, thus solving Problem 7.2 in \cite{etingof2016representation} in the case of general linear and special linear groups. 

The Yangian $Y(\mathfrak{g})$ was introduced by Vladimir Drinfeld in his paper \cite{drinfeld1985hopf} for a general simple Lie algebra $\mathfrak{g}$. He later classified the finite-dimensional irreducible representations of $Y(\mathfrak{g})$ in \cite{drinfeld1988new}. The theory of Yangians for $\mathfrak{gl}_n$ and $\mathfrak{sl}_n$ is described in detail also in the textbooks \cite{molev2007yangians}, \cite{chari1995guide}. These books use somewhat different approaches, the first one highly relying on the Faddeev-Reshetikhin-Takhtajan presentation of $Y(\mathfrak{gl}_n)$. We will mostly follow the notations and techniques of \cite{molev2007yangians}, but we will describe some connections with methods of \cite{chari1995guide} later on. The main result of this paper is the classification of "finite-dimensional" irreducible representations of $Y(\mathfrak{sl}_t)$ and $Y(\mathfrak{gl}_t)$ by Drinfeld polynomials, which generalizes the known theorems in the finite rank case. 

We can not prove this theorem using the same techniques as in the finite rank case, since there is no obvious way to generalize the triangular decomposition of $Y(\mathfrak{gl}_n)$ or $Y(\mathfrak{sl}_n)$ to the complex rank case. Thus we need to use some instruments to connect results in finite rank case to the result in complex rank.

There are several methods of generalizing results from the representation theory in finite rank cases to the representation theory in Deligne categories. In this paper we will use the method of ultraproducts, which was introduced for transcendental rank in \cite{deligne2007categorie} and later generalized to algebraic ranks in the work \cite{harman2016deligne} motivated by Deligne's letter to Ostrik. This method provides a way to generalize results from finite rank to both algebraic and transcendental $t$ provided we know enough about the representation theory in positive characteristic. For other methods see for example \cite{mathew2013categories}.

We also would like to note that previously this subject was studied by Léa Bittmann in \cite{bittmann}. There she has studied the representation of Yangians $Y(\mathfrak{sl}_t)$ and $Y(\mathfrak{gl}_t)$ using the invariant algebra $Y(\mathfrak{g})^{\mathfrak{g}}$.

The sturcture of the paper is as follows. In section 1, we first briefly discuss the construction of the Deligne category $\Rep(GL_t)$ and its properties which are going to be relevant to us. Later in the section we review the theory of ultrafilters and ultraproducts and describe the contruction of $\Rep(GL_t)$ as an ultraproduct of the categories of representations of $GL_n$.

In section 2 we recall the Faddeev-Reshetikhin-Takhtajan presentation of $Y(\mathfrak{gl}_t)$ and describe its basic properties, define the subalgebra $Y(\mathfrak{sl}_t)$, and describe the classification theorem of irreducible finite-dimensional representations of $Y(\mathfrak{gl}_n)$ and $Y(\mathfrak{sl}_n)$.

In section 3 we generalize some of the results of section 2 to positive characteristic under some conditions on $n$ and $p$. These results will be crucial for us when we work with $Y(\mathfrak{gl}_t)$ for algebraic $t$.

In section 4 we first define $Y(\mathfrak{gl}_t)$ and $Y(\mathfrak{sl}_t)$ and prove some of their properties, using the ultraproduct approach. Later we study irreducible representations of these algebras and prove the classification theorem for them.

\subsection*{Acknowledgement} 

I want to thank Pavel Etingof for suggesting this project to me  and for the valuable discussions we had about it. 

The work has been supported by the RScF grant 16-12-10151.

\section{Deligne category $\Rep (GL_t)$}

\subsection{Construction of $\Rep (GL_t)$ and its main properties.}

Here we will give a definition of the category $\Rep(GL_t)$ for $t \in \mathbb C$ and state a number of important properties of this category. For a more detailed discussion, see \cite{etingof2016representation}, \cite{comes2012ideals}.

First, we define a preliminary category $\Rep_0(GL_t)$:
\begin{def0}
$\Rep_0(GL_t)$ is a skeletal symmetric tensor category with objects being given by pairs $(n,m)$ with $n,m \in \mathbb Z_{\ge 0}$, represented as rows consisting of $n$ $\bullet$'s and $m$ $\circ$'s.

$\Hom_{\Rep_0(GL_t)}((m,n),(m',n'))$ is a vector space over $\mathbb C$ with a basis given by all possible matchings, i.e. ways to pair all elements of two rows corresponding to $(n,m)$ and $(n',m')$ such that $\bullet$ can be paired with $\bullet$ or $\circ$ with $\circ$ only if they belong to different rows, and $\bullet$ can be paired with $\circ$ only if they are in the same row.

Composition is given by vertical concatenation of diagrams and forgetting the middle row, and then deletion of each resulting loop and multiplying the coefficient of the corresponding basis vector by $t$ for each loop deleted. Tensor product is given by horizontal concatenation of diagrams.
\end{def0}

Now we can easily construct the Deligne category from this:
\begin{def0}
The Deligne category $\Rep(GL_t)$ is the Karoubian envelope of the additive envelope of $\Rep_0(GL_t)$.
\end{def0}

We will also need auxiliary definitions:
\begin{def0}
A bipartition $\lambda$ is a pair of partitions $\lambda = (\lambda^\bullet, \lambda^\circ)$. The size of $\lambda$ is equal to $|\lambda| = |\lambda^\bullet| + |\lambda^\circ|$. The length of $\lambda$ is equal to $l(\lambda) = l(\lambda^\bullet) + l(\lambda^\circ).$
\end{def0}

\begin{def0}
An object $V((\square, \emptyset)) = (1,0)$ is called the fundamental representation and is denoted by $V$. An object $V((\emptyset,\emptyset)) = (0,0)$ is called the trivial representation and is denoted by $\mathbb C$ (abusing notation).
\end{def0}

Here are some properties of $\Rep(GL_t)$:
\begin{prop}
\textbf{a)} The category $\Rep(GL_t)$ is semisimple for $t\notin \mathbb Z$.

\textbf{b)} The irreducible objects of $\Rep(GL_t)$ are in $1-1$ correspondence with bipartitions of arbitrary size. They are denoted by $V(\lambda)$. Moreover $V(\lambda)$ is a direct summand in $(r,s)$.

\textbf{c)} The dimension of $V$ is $t$ and the dimension of $\mathbb C$ is $1$.

\textbf{d)} $V((\lambda^\bullet,\lambda^\circ))^* = V((\lambda^\circ, \lambda^\bullet))$
\end{prop}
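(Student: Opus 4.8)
The four statements are essentially Deligne's, so the quickest honest route is to postpone their proof until the ultraproduct realization of $\Rep(GL_t)$ is available (next subsection) and then transfer each of them from the corresponding statement about $GL_n$ via \lthm. If instead one wants a direct argument at this stage, the plan is: prove (a) by a nondegeneracy computation on morphism spaces, deduce (b) from semisimplicity together with an interpolation argument against $GL_n$ for $n\gg 0$, and establish (c) and (d) by the diagram calculus of $\Rep_0(GL_t)$.

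For (a): the endomorphism algebra $\End_{\Rep_0(GL_t)}((r,s))$ is the walled Brauer algebra $B_{r,s}(t)$ (diagrams matching $r$ $\bullet$'s and $s$ $\circ$'s on top to the same on the bottom, with vertical edges staying on one side of the $\bullet/\circ$ wall and horizontal edges crossing it). It carries the bilinear form $(f,g)\mapsto\mathrm{tr}(fg)$ obtained by closing diagrams into loops and substituting $t$ for each loop. The Gram determinant of this form, computed on a basis adapted to the cell-module filtration, is a product of linear polynomials in $t$ whose roots are all integers; hence for $t\notin\mathbb Z$ the form is nondegenerate and each $B_{r,s}(t)$ is semisimple. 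Since every object of $\Rep(GL_t)$ is a direct summand of a finite sum of the generators $(r,s)$, and a corner of a semisimple algebra is again semisimple, a Karoubian $\mathbb C$-linear category whose generators have semisimple endomorphism algebras is semisimple; this gives (a).

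For (b): in the semisimple category it is enough to decompose each generator $(r,s)=V^{\otimes r}\otimes(V^*)^{\otimes s}$ and to record the new indecomposable summands. Comparing with $GL_n$ for $n\ge r+s$, where $\End_{GL_n}(V_n^{\otimes r}\otimes(V_n^*)^{\otimes s})\cong B_{r,s}(n)$ by classical invariant theory and this object decomposes into the rational Schur functors $S_{(\lambda^\bullet,\lambda^\circ)}(V_n)$ indexed by bipartitions $\lambda$ with $|\lambda^\bullet|\le r$, $|\lambda^\circ|\le s$ and $|\lambda^\bullet|+|\lambda^\circ|\equiv r+s$ in the appropriate parity, and using that the idempotent combinatorics of $B_{r,s}(t)$ is insensitive to the generic parameter, the same bipartitions index the summands $V(\lambda)$ of $(r,s)$; the summand with $|\lambda^\bullet|=r$, $|\lambda^\circ|=s$ is the one that does not already occur in smaller generators, which produces the bijection with the set of all bipartitions and the last assertion of (b). Part (c) is the trace computation: for $V=(1,0)$ the dimension $\mathrm{ev}_V\circ\mathrm{coev}_V$ is a single closed loop, evaluating to $t$, while for $\mathbb C=(0,0)$ the identity is the empty diagram, with trace $1$. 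Part (d) follows since duality is the contravariant monoidal autoequivalence that on objects sends $(n,m)\mapsto(m,n)$ and on diagrams interchanges $\bullet$ and $\circ$; it preserves indecomposability, so it sends the new summand of $(r,s)$ to the new summand of $(s,r)$, giving $V((\lambda^\bullet,\lambda^\circ))^*=V((\lambda^\circ,\lambda^\bullet))$ — the same identity being visible on the $GL_n$ side as $S_{(\lambda^\bullet,\lambda^\circ)}(V_n)^*=S_{(\lambda^\circ,\lambda^\bullet)}(V_n)$.

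The only genuinely nontrivial input is the semisimplicity in (a): the Gram-determinant computation for the walled Brauer algebras (equivalently, Deligne's nondegeneracy criterion for the morphism pairing) is the real obstacle, and in the write-up I would cite \cite{deligne2007categorie,comes2012ideals,etingof2016representation} for it rather than reproduce it, after which (b)–(d) are bookkeeping.
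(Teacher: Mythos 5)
The paper does not actually prove this proposition: it is stated as a list of known facts, with the reader pointed to \cite{etingof2016representation} and \cite{comes2012ideals} at the head of the subsection. So there is no in-paper proof to compare against; what you have written is a sketch of the argument one finds in that literature, and it is a faithful one. Your ``honest route'' remark — defer to the ultraproduct realization of the next subsection — is also defensible within this paper's logical order, since the proof of Theorem 1.3.1 relies on the universal property (Prop.~1.1.6) rather than on Prop.~1.1.5, so no circularity would result; it is, however, not what the paper does, and for algebraic $t$ that route requires the extra input that $V(\lambda|_{t_n})$ is actually irreducible in characteristic $p_n$ for almost all $n$ (Lemma~3.1.2 in the paper), which the author flags but proves only later.

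Two spots in the direct sketch deserve tightening. In (a), ``a corner of a semisimple algebra is again semisimple'' plus ``each $B_{r,s}(t)$ is semisimple'' does not by itself give semisimplicity of the whole Karoubian envelope: a general object is a summand of $\bigoplus_i (r_i,s_i)$, whose endomorphism algebra is not a single walled Brauer algebra, and the Hom-pairings between distinct generators must also be shown nondegenerate. The clean way to close this is Deligne's criterion: a Karoubian rigid tensor category with finite-dimensional Hom spaces over a field of characteristic $0$ is semisimple iff its ideal of negligible morphisms vanishes, which reduces to nondegeneracy of the trace form on each $\End((r,s))$ because $\Hom((r,s),(r',s'))$ identifies, via rigidity, with a Hom space from the unit into some $(r'',s'')$. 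In (b), your parity condition is a touch loose: the summands of $V_n^{\otimes r}\otimes V_n^{*\otimes s}$ are indexed by bipartitions with $r-|\lambda^\bullet|=s-|\lambda^\circ|\ge 0$ (the common number of contractions), which is stronger than ``$|\lambda^\bullet|+|\lambda^\circ|\equiv r+s$''; this is what makes $V(\lambda)$ appear for the first time precisely in $(|\lambda^\bullet|,|\lambda^\circ|)$, as asserted in the last clause of (b). With these fixed, (c) and (d) are exactly as you say.
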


Also $\Rep(GL_t)$ has an important universal property. Suppose $\mathcal T$ is a symmetric rigid tensor category and denote by $\mathcal T_t$ the full subcategory of $t$-dimensional objects in $\mathcal T$. Also by $\mathcal{H}om(\Rep(GL_t),\mathcal T)$ denote a category of symmetric tensor functors between $\Rep(GL_t)$ and $\mathcal T$. When we have the following proposition (Thm. 2.9(ii) in \cite{etingof2016representation} and Prop. 10.3 in \cite{deligne2007categorie}):
\begin{prop}
\textbf{(Deligne)}The following functor induces an equivalence of categories:
\begin{align*}
    \mathcal{H}om(\Rep(GL_t),\mathcal T) &\rightarrow \mathcal T_t \\
    F &\mapsto F(V) \\ 
    (\eta: F \Rightarrow F') &\mapsto \eta_V \ .
\end{align*}
\end{prop}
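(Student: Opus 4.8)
The plan is to show that the functor in the statement is essentially surjective and fully faithful; essential surjectivity is where the content lies, so I would start there. First note the functor is well-defined: a symmetric tensor functor preserves categorical dimension, so $\dim F(V)=\dim V=t$ by part (c) of the properties above, hence $F(V)\in\mathcal T_t$.

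\textbf{Essential surjectivity.} Given an object $X$ of $\mathcal T_t$, I want a symmetric tensor functor $F\colon\Rep(GL_t)\to\mathcal T$ with $F(V)\cong X$. Since $\mathcal T$ is rigid, $X$ is dualizable, and $\dim X=t$ means precisely that the loop $\mathbf 1\xrightarrow{\mathrm{coev}}X\otimes X^{*}\xrightarrow{\mathrm{ev}}\mathbf 1$ equals $t\cdot\mathrm{id}_{\mathbf 1}$. I would first construct a symmetric tensor functor $F_0\colon\Rep_0(GL_t)\to\mathcal T$ on the preliminary diagram category by setting $F_0((n,m))=X^{\otimes n}\otimes(X^{*})^{\otimes m}$, and sending each matching — the chosen basis of the Hom spaces — to the morphism of $\mathcal T$ obtained by reading the matching as a composite of identities, symmetries, evaluations $X\otimes X^{*}\to\mathbf 1$, and coevaluations $\mathbf 1\to X\otimes X^{*}$ (and the variants with $X^{*}$ first). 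The one nontrivial check is compatibility with composition: composition in $\Rep_0(GL_t)$ is diagrammatic composition followed by deleting each closed loop and multiplying by $t$, and in $\mathcal T$ a closed loop is exactly $\dim(X)\cdot\mathrm{id}=t\cdot\mathrm{id}$, so the two agree; the remaining relations (snake identities, naturality and the hexagons for the symmetry) hold automatically in any symmetric rigid tensor category. Then I would extend $F_0$ along $\Rep_0(GL_t)\hookrightarrow\Rep(GL_t)$ via the universal properties of the additive and Karoubian envelopes, which is legitimate because $\mathcal T$ is additive and idempotent complete (in our applications it is even abelian). By construction $F(V)=F((1,0))=X$.

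\textbf{Full faithfulness.} By construction $\Rep(GL_t)$ is the Karoubian additive envelope of $\Rep_0(GL_t)$, so every object is a direct summand of a finite direct sum of objects $(r,s)=V^{\otimes r}\otimes(V^{*})^{\otimes s}$; thus $V$ tensor-generates $\Rep(GL_t)$. A monoidal natural transformation $\eta\colon F\Rightarrow F'$ satisfies $\eta_{\mathbf 1}=\mathrm{id}$ and is multiplicative for $\otimes$, hence is determined on each $V^{\otimes r}\otimes(V^{*})^{\otimes s}$ by $\eta_V$ and $\eta_{V^{*}}$; moreover $\eta_{V^{*}}$ is forced by $\eta_V$ through rigidity, using that such an $\eta$ is automatically an isomorphism (so $\eta_V^{*}$ is invertible and $\eta_{V^{*}}=(\eta_V^{*})^{-1}$); and $\eta$ on a general object is then determined by naturality with respect to the idempotent cutting it out of some $(r,s)$. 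So $\eta\mapsto\eta_V$ is injective, i.e.\ the functor is faithful. (In particular all morphisms of $\mathcal{H}om(\Rep(GL_t),\mathcal T)$ are isomorphisms, which is why $\mathcal T_t$ is to be regarded as a groupoid.) For fullness, given an isomorphism $f\colon F(V)\to F'(V)$ in $\mathcal T_t$, I would put $\eta_{(n,m)}=f^{\otimes n}\otimes((f^{-1})^{*})^{\otimes m}$ and transport it to arbitrary objects through the cutting idempotents; the monoidal-naturality conditions then reduce to $\mathrm{ev}_Y\circ(f\otimes(f^{-1})^{*})=\mathrm{ev}_X$ and the analogous identity for $\mathrm{coev}$, both immediate since $f$ is an isomorphism. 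Hence the functor is full as well, and therefore an equivalence.

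\textbf{Where the difficulty lies.} The only genuinely substantive step is the construction of $F_0$, and within it the verification that it is well-defined on morphisms: this is the unique place where the hypothesis $\dim X=t$ enters, via the rule that a loop evaluates to $t$, and carrying it out cleanly requires expressing an arbitrary matching as a composite of elementary cup/cap/crossing morphisms so that the single relation built into $\Rep_0(GL_t)$ is matched by a relation holding automatically in $\mathcal T$. Everything afterwards — extending through the additive and Karoubian envelopes, and the whole of full faithfulness — is formal once one knows that $V$ tensor-generates $\Rep(GL_t)$.
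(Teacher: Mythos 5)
The paper itself does not prove this proposition; it states it with citations to Deligne (Prop.~10.3 of the 2007 paper) and Etingof (Thm.~2.9(ii)), so there is no in-paper proof to compare against. Your argument is correct and is the standard route: essential surjectivity by building $F_0$ on the diagram category $\Rep_0(GL_t)$ (sending matchings to composites of identities, symmetries, evaluations and coevaluations, with the closed-loop relation matching $\dim X = t$) and then extending through the additive and Karoubian envelopes, and full faithfulness from the fact that $V$ tensor-generates $\Rep(GL_t)$ together with rigidity forcing $\eta_{V^*}$ from $\eta_V$. Your parenthetical that $\mathcal T_t$ must be read as the groupoid of $t$-dimensional objects and isomorphisms, since monoidal natural transformations between tensor functors are automatically invertible, is a genuine precision that the paper's phrase ``full subcategory'' glosses over. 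The one place I would tighten is the well-definedness of $F_0$ on a matching: a matching has many decompositions into elementary cups, caps and crossings, and you should say explicitly that any two yield the same morphism of $\mathcal T$ by Mac~Lane coherence for symmetric monoidal categories together with the zig-zag (snake) identities — you gesture at this but it is the substance of the ``holds automatically'' claim.
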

In particular this means that for any $t$-dimensional object in $\mathcal{T}$ we have a symmetric tensor functor $\Rep(GL_t) \to \mathcal{T}$ sending $V$ to this object.

\subsection{Ultrafilters and ultraproducts.}

It is crucial to us that there is another construction of $\Rep(GL_t)$ as a subcategory in a certain ultraproduct category, which formalizes the fact that $\Rep(GL_t)$ is a "limit" of representation categories of general linear groups.

We will quickly define what ultrafilters and ultraproducts are, state their main properties and give some examples. For more details see \cite{schoutens2010use}.

\begin{def0}
An ultrafilter $\mathcal F$ on a set $X$ is a subset of $2^{X}$ satisfying the following properties:

$\bullet$ $X \in \mathcal F$ ;

$\bullet$ If $A \in \mathcal F$ and $A \subset B$, then $B \in \mathcal F$ ;

$\bullet$ If $A,B \in \mathcal F$, then $A\cap B \in \mathcal F$ ;

$\bullet$ For any $A\subset X$ either $A$ or $X \backslash A$ belongs to $A$, but not both.
\end{def0}

There is an obvious family of examples of ultrafilters: $\mathcal F_x = \{ A| x \in A \}$ for $x \in X$. Such ultrafilters are called principal. Using Zorn's lemma one can show that there exist  non-principal ultrafilters $\mathcal F$. Also it follows that all cofinite sets belong to such an $\mathcal F$ (but not all  sets belonging to $\mathcal F$ are cofinite). From now on we will denote by $\mathcal F$ a fixed non-principal ultrafilter on $\mathbb N$. Also by something being true for "almost all $n$", we will mean that it is true for all $n$ in some $A \in \mathcal F$. Note that by definition of an ultrafilter, if two statements hold for almost all $n$, then their conjunction holds for almost all $n$. Also note that if  for almost all $n$ the disjunction of a finite number of statements holds, then one of them holds for almost all $n$ (if not then each of them holds on some subset $A \notin \mathcal F$ and the union of this subsets is not in $\mathcal F$). We will use these elementary observations quite frequently. 

Let's now define a notion of an ultraproduct. 
\begin{def0}
Suppose we have a collection of sets $S_i$ labeled by natural numbers. Suppose that for almost all $x\in A$ one has $S_x \ne \emptyset$. Then $\prod_{\mathcal F}S_x$ is the quotient of $\prod_{x \in A} S_x$ by the following relation: $\{s_x\} \sim \{s'_x\}$ iff $s_x = s_x'$ for almost all $x$.
If for almost all $x$ one has $S_x = \emptyset$, then $\prod_{\mathcal F}S_x = \emptyset$. 
The set $\prod_{\mathcal F}S_x$ is called the ultraproduct of $S_x$. 
\end{def0}
Usually we will denote $\{ s_x \} \in \prod_{\mathcal F}S_i$ by $\prod_\mathcal F s_x$.

The most important property of ultraproducts is the following:
\begin{thm}\textbf{\lthm} (Thm. 2.3.2 in \cite{schoutens2010use})

Suppose we have a collection of sequences of sets $S^{(k)}_i$ for $k = 1,\dots,m$ and a collection of sequences of elements $f^{(r)}_i$ for $r = 1,\dots, l$ and a  formula of a first order language $\phi(x_1,\dots,x_l, Y_1, \dots, Y_m)$ depending on some parameters $x_i$ and sets $Y_j$. Denote by $S^{(k)} = \prod_{\mathcal F}S^{(k)}_{n}$ and $f^{(r)} = \prod_{\mathcal F} f^{(r)}_n$.  Then 
$\phi(f^{(1)}_n, \dots, f^{(l)}_n, S^{(1)}_n, \dots S^{(m)}_n)$ is true for almost all $n$ iff  $\phi(f^{(1)}, \dots, f^{(l)}, S^{(1)}, \dots S^{(m)})$ is true.
\end{thm}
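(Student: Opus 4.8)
The plan is to prove \lthm by induction on the complexity of the first-order formula $\phi$, following the classical argument. First I would package the data into a sequence of (many-sorted) first-order structures $M_n$: the sorts of $M_n$ are the sets $S^{(1)}_n, \dots, S^{(m)}_n$ (so that a bounded quantifier $\exists y$ occurring in $\phi$ ranges over one of these, which is the role of the parameters $Y_j$), the distinguished constants are $f^{(1)}_n, \dots, f^{(l)}_n$, and $M_n$ carries the component-wise interpretations of whatever function and relation symbols occur in $\phi$. The ultraproduct $M := \prod_{\mathcal F} M_n$ is then the structure with sorts $S^{(k)} = \prod_{\mathcal F} S^{(k)}_n$, constants $f^{(r)} = \prod_{\mathcal F} f^{(r)}_n$, and functions and relations defined representative-wise. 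Two preliminary checks are needed: that these representative-wise definitions descend to the quotient (if two sequences of tuples agree on a set in $\mathcal F$ then so do their images, which is exactly closure of $\mathcal F$ under finite intersections and supersets), and that the value of any term at a tuple of classes in $M$ is the class of the sequence of component-wise values (a routine sub-induction on terms).

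Then I would run the main induction, proving the slightly stronger statement that makes it go through: for every subformula $\psi(z_1,\dots,z_j)$ of $\phi$ and every choice of sequences $a^{(1)}_n, \dots, a^{(j)}_n$ of elements of the appropriate sorts, $\psi$ holds of $(\prod_{\mathcal F} a^{(1)}_n, \dots, \prod_{\mathcal F} a^{(j)}_n)$ in $M$ if and only if $\psi(a^{(1)}_n, \dots, a^{(j)}_n)$ holds in $M_n$ for almost all $n$. The atomic case is immediate from the definition of $M$ and the term lemma. The conjunction case uses closure of $\mathcal F$ under finite intersections and supersets; the negation case uses the defining dichotomy of an ultrafilter, that for each $A$ exactly one of $A$ and $X \setminus A$ lies in $\mathcal F$; disjunction and implication then follow formally, or can be handled directly via the observation already recorded in the text that a finite union of sets lies in $\mathcal F$ only if one of the pieces does. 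The universal quantifier I would reduce to $\neg \exists \neg$.

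The genuinely non-trivial case, and the step I expect to be the main obstacle, is the existential quantifier $\psi = \exists y\, \chi(y,\bar{z})$ with $y$ ranging over some sort $S^{(k)}$. The forward direction is easy: a witness in $M$ is a class $\prod_{\mathcal F} g_n$, and the inductive hypothesis applied to $\chi$ shows $\chi(g_n, \bar{a}_n)$ holds for almost all $n$, hence so does $\exists y\, \chi$. For the converse one is handed, for each $n$ in some $A \in \mathcal F$, only the \emph{existence} of a witness $g_n \in S^{(k)}_n$, and must assemble these into an honest sequence; this is where the axiom of choice is used. After choosing $g_n$ for $n \in A$ (and setting $g_n$ to anything for $n \notin A$, which is harmless since $A \in \mathcal F$), the inductive hypothesis for $\chi$ gives that $\prod_{\mathcal F} g_n$ witnesses $\exists y\, \chi$ in $M$. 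One should also remark that for $n \in A$ the sort $S^{(k)}_n$ is automatically nonempty so the choice makes sense, and that the degenerate possibility $S^{(k)}_n = \emptyset$ for almost all $n$ is consistent with the conventions already fixed for ultraproducts of eventually empty families. The theorem as stated is the special case $\psi = \phi$ with all free variables among the parameters $x_1, \dots, x_l$.
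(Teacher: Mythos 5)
The paper does not prove Łoś's theorem; the statement is cited as-is from Theorem 2.3.2 of \cite{schoutens2010use}, so there is no in-paper proof to compare against. Your proof is the standard one — fold the data into a sequence of (many-sorted) structures, check the term lemma and well-definedness on representatives, and then induct on formula complexity, with conjunction handled by closure of $\mathcal F$ under intersections and supersets, negation by the ultrafilter dichotomy, and the existential step (the only genuinely non-trivial one) by a choice function selecting witnesses on a set in $\mathcal F$ — and it is correct; your reading of the informal ``set parameters $Y_j$'' as sorts over which the quantifiers of $\phi$ range is also the natural one, and your remark on the degenerate empty-sort case is consistent with the convention the paper fixes in its definition of ultraproduct.
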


In plain language this means that if we have a sequence  of collections of sets with some algebraic structure given by maps between them, then, first, we have the corresponding maps between the ultraproducts of these sets. And, second, these maps satisfy a given set of axioms or properties for the ultraproducts iff they satisfy these axioms/properties for almost all $n$. Also frequently it is useful to think about  ultraproducts as a some kind of limits as $n \mapsto \infty$.

We  give a number of examples of such constructions, which are going to be useful to us below:

\begin{ex}
 If $S_i$ is a sequence of monoids/groups/rings/fields then $\prod_{\mathcal F} S_i$ with operations given by taking the ultraproduct of the operations in the corresponding sets of $\Hom_{Sets}$ gives us a structure of monoid/group/ring/field by Łoś's theorem.
\end{ex}

\begin{ex}
 If $V_i$ are finite-dimensional vector spaces over a field $k$, then $\prod_{\mathcal F} V_i$ is not necessarily a finite-dimensional vector space, since the property of being finite-dimensional cannot be written in first-order language. But if the dimensions of $V_i$ are bounded, then they are the same for almost all $i$ and hence $V$ has the same dimension (for example, because the ultraproduct of bases is a basis).
\end{ex}

\begin{ex}
 Take the ultraproduct of a countably infinite number of copies of $\overline{\mathbb Q}$. By \lthm $\prod_{\mathcal F} \overline{\mathbb Q}$ is a field, which is algebraically closed. It has characteristic zero since $\forall k  \in \mathbb Z$ such that $k\ne 0$ it follows that $ k = \prod_{\mathcal F} k\ne 0$.  Also it is easy to see that its cardinality is continuum. Hence by Steinitz's theorem\footnote{This theorem tells us that two uncountable algebraically closed fields are isomorphic iff their characteristic and cardinality are the same. It is proven in \cite{steinitz1910algebraische}.} $\prod_{\mathcal F} \overline{\mathbb Q} \simeq \mathbb C$. Note that there is no canonical isomorphism.
\end{ex}
 
 \begin{ex}
 Take the ultraproduct of $\overline{\mathbb F}_{p_n}$ for some sequence of distinct prime numbers $p_n$. As before, by \lthm $\prod_{\mathcal F} \overline{\mathbb F}_{p_n}$ is a field, which is algebraically closed. Also as before it has cardinality continuum. Now $k = \prod_{\mathcal F} k \ne 0$, since it is equal to zero for at most one $k$.  Hence $\prod_{\mathcal F} \overline{\mathbb F}_{p_n} \simeq \mathbb C$, again not in a canonical way.
\end{ex}

\begin{ex}
 Suppose $\mathcal C_i$ is a collection of small categories. We can define an ultraproduct category $\widehat{\mathcal C} = \prod_{\mathcal F} \mathcal C_i$ as a category with objects $Ob( \widehat{ \mathcal C}) = \prod_{\mathcal F} Ob(\mathcal C_i)$ and $\Hom_{\widehat{\mathcal C}}(\prod_{\mathcal F} X_i,\prod_{\mathcal F} Y_i) = \prod_{\mathcal F} \Hom_{\mathcal C_i}(X_i,Y_i)$; the composition maps are given by the ultraproducts of composition maps, i.e. $(\prod_{\mathcal F}f_i) \circ (\prod_{\mathcal F}g_i) = \prod_{\mathcal F} (f_i \circ g_i)$. By \lthm this data satisfies the axioms of a category. If the categories $\mathcal C_i$ have some structures, for example the structures of an abelian/monoidal/tensor category, then $\widehat{\mathcal C}$ also has these structures\footnote{But the finite-length property, for example, does not survive}.
 
 Usually $\widehat{\mathcal C}$ is too big and it is interesting to consider some full subcategories $\mathcal C$ in there, or, equivalently, consider ultraproducts only of some sequences of objects of $\mathcal C_i$, for example bounded in some sense. 
 
 This construction obviously extends to essentially small categories. All categories which we will consider are essentially small, so we won't bother mentioning this later.
\end{ex}

\subsection{Deligne categories as ultraproducts.}

Here, we will  show how to construct $\Rep(GL_t)$ using ultraproducts. See \cite{harman2016deligne}, \cite{deligne2007categorie}.

We want to apply the last example of the previous section to $\mathcal C_i = \textbf{Rep}(GL_{n_i}, \mathbb K_i)$ -- the tensor category of finite-dimensional representations of $GL_{n_i}$ over $\mathbb K_i$. From now on we will denote by $\textbf{Rep}(GL_n) = \textbf{Rep}(GL_n,\overline{\mathbb Q})$ and by $\textbf{Rep}_p(GL_n) = \textbf{Rep}(GL_n, \overline{\mathbb F}_p)$. We have the following result (Introduction of \cite{deligne2007categorie} or Thm. 1.1 in \cite{harman2016deligne}):
\begin{thm}

\textbf{a)} Suppose $t\in \mathbb C$ is transcendental. Consider $\widehat{\mathcal C} = \prod_{\mathcal{F}} \textbf{Rep}(GL_n)$. Denote by $V_i$ -- the fundamental representation of $GL_i$ and $V = \prod_{\mathcal{F}}V_i$. Fix an isomorphism $\prod_{\mathcal F}\overline{\mathbb Q}\simeq \mathbb C$ such that $\prod_{\mathcal F} i = t$. Then the full subcategory of the $\prod_{\mathcal F}\overline{\mathbb Q}$-linear category $\widehat{\mathcal C}$ generated by $V$ under taking duals, tensor products, direct sums and direct summands is equivalent to the $\mathbb C$--linear category $\Rep(GL_t)$, in a way consistent with the above isomorphism $\prod_{\mathcal F}\overline{\mathbb Q} \simeq \mathbb C$.

\textbf{b)} Suppose $t \in \mathbb C$ is algebraic but not integer, with minimal polynomial $q(x) \in \mathbb Z[x]$. Fix a sequence of distinct primes $p_n$ and sequence of integers $t_n$ tending to infinity such that $q(t_n) = 0$ in $\mathbb F_{p_n}$. Moreover fix an isomorphism $\prod_{\mathcal F}\overline{\mathbb F}_{p_n}\simeq \mathbb C$ such that $\prod_{\mathcal F} t_i = t$. Set $\widehat{\mathcal C} = \prod_{\mathcal{F}} \textbf{Rep}_{p_n}(GL_{t_n})$. Denote by $V_{t_i}$  the fundamental representation of $GL_{t_i}$ and set $V_t = \prod_{\mathcal{F}}V_{t_i}$. Then the  full subcategory of the $\prod_{\mathcal F}\overline{\mathbb F}_{p_n}$-linear category $\widehat{\mathcal C}$ generated by $V$ under taking duals, tensor products, direct sums and direct summands is equivalent to the $\mathbb C$-linear category $\Rep(GL_t)$,  in a way consistent with the above isomorphism $\prod_{\mathcal F}\overline{\mathbb F}_{p_n}\simeq \mathbb C$.
\end{thm}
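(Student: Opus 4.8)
The plan is to derive both parts from three ingredients already in hand: the universal property of $\Rep(GL_t)$ (Deligne's Proposition in Section 1.1), \lthm, and classical \emph{mixed Schur--Weyl duality} for the general linear group, i.e. the first and second fundamental theorems of invariant theory for $GL_N$ when $N$ is large. I treat a) first and then indicate the changes for b). To begin, $\widehat{\mathcal C}=\prod_{\mathcal F}\textbf{Rep}(GL_n)$ is an essentially small symmetric rigid tensor category, linear over $\prod_{\mathcal F}\overline{\mathbb Q}$: the abelian/symmetric monoidal structure passes to the ultraproduct as in the categorical ultraproduct example of Section 1.2, and rigidity survives by \lthm since it is a first-order property of the (co)evaluation maps. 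Moreover the categorical dimension of $V=\prod_{\mathcal F}V_i$ is the trace of $\Id_V$, which equals $\prod_{\mathcal F}\dim V_i=\prod_{\mathcal F} i$, and this is $t$ under the fixed isomorphism $\prod_{\mathcal F}\overline{\mathbb Q}\simeq\mathbb C$ with $\prod_{\mathcal F} i=t$. Hence $V$ is a $t$-dimensional object of $\widehat{\mathcal C}$, and the universal property gives a $\prod_{\mathcal F}\overline{\mathbb Q}$-linear symmetric tensor functor $F\colon\Rep(GL_t)\to\widehat{\mathcal C}$ with $F(V)=V$, compatible with the isomorphism of fields.

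Since every object of $\Rep(GL_t)$ is obtained from $V$ by duals, tensor products, direct sums and summands, and $F$ commutes with all of these, the essential image of $F$ lies in the subcategory $\mathcal C$ of the statement. Conversely, once $F$ is fully faithful, lifting of idempotents (legitimate because $\Rep(GL_t)$ is Karoubian) shows the essential image is closed under summands, hence equals $\mathcal C$; so $F$ would be an equivalence onto $\mathcal C$. It therefore suffices to prove that $F$ is fully faithful, and since $\Rep(GL_t)$ is the Karoubian additive envelope of $\Rep_0(GL_t)$ and $F$ respects biproducts and images of idempotents, it is enough to show that for all $m,n,m',n'$ the map
\[ F\colon \Hom_{\Rep_0(GL_t)}((m,n),(m',n'))\longrightarrow \prod_{\mathcal F}\Hom_{\textbf{Rep}(GL_i)}\!\left(V_i^{\otimes m}\otimes(V_i^*)^{\otimes n},\ V_i^{\otimes m'}\otimes(V_i^*)^{\otimes n'}\right) \]
is an isomorphism, the target being $\Hom_{\widehat{\mathcal C}}(F(m,n),F(m',n'))$ by the definition of the ultraproduct category.

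The source has the set of matchings between the two rows as a basis, by the definition of $\Rep_0(GL_t)$. Being a symmetric tensor functor, $F$ sends a matching to the $\prod_{\mathcal F}$ of the corresponding composite of coevaluations, evaluations and braidings on $V_i^{\otimes m}\otimes(V_i^*)^{\otimes n}$ --- equivalently, to the $\prod_{\mathcal F}$ of its image under the walled Brauer representation of $GL_i$ --- and a matching containing a closed loop is sent to $\left(\prod_{\mathcal F} i\right)=t$ times the loopless one, in agreement with the loop rule in $\Rep_0(GL_t)$. By mixed Schur--Weyl duality, for $i$ larger than a bound depending only on $m+n+m'+n'$ these images form a basis of $\Hom_{\textbf{Rep}(GL_i)}(\dots)$; since $\mathcal F$ is non-principal this holds for almost all $i$, and an ultraproduct of bases is a basis (as recalled for vector spaces in Section 1.2), so the target above has $F(\text{matchings})$ as a basis. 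Thus $F$ carries a basis to a basis, is an isomorphism, and a) follows.

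Part b) runs in the same way over $\prod_{\mathcal F}\overline{\mathbb F}_{p_n}\simeq\mathbb C$, with $\widehat{\mathcal C}=\prod_{\mathcal F}\textbf{Rep}_{p_n}(GL_{t_n})$ and $V_i$ replaced by the fundamental module $V_{t_i}$ of $GL_{t_i}$, using $\dim V_t=\prod_{\mathcal F} t_i=t$. The one step that needs genuine care --- the main obstacle --- is the input to the Hom computation in positive characteristic: one needs the walled Brauer algebra $B_{r,s}(t_i)$ to map \emph{isomorphically} onto $\End_{GL_{t_i}}\!\left(V_{t_i}^{\otimes r}\otimes(V_{t_i}^*)^{\otimes s}\right)$ over $\overline{\mathbb F}_{p_i}$ for almost all $i$. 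Mixed Schur--Weyl duality for $GL_N$ is characteristic-free once $N$ is large relative to $r+s$; since $t_i\to\infty$ (and, since the $p_i$ are distinct, $p_i\to\infty$ as well), the hypotheses hold for all large $i$, hence for almost all $i$, and \lthm transports the statement to the ultraproduct exactly as before. The non-semisimplicity of $\textbf{Rep}_{p_i}(GL_{t_i})$ causes no trouble, because all Hom spaces we compute are between the objects $V_{t_i}^{\otimes m}\otimes(V_{t_i}^*)^{\otimes n}$, on which morphisms are governed by the characteristic-free diagram calculus.
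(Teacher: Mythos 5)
Your proposal follows essentially the same route as the paper: construct $F\colon\Rep(GL_t)\to\widehat{\mathcal C}$ via Deligne's universal property applied to the $t$-dimensional object $V=\prod_{\mathcal F}V_i$, observe the essential image lies in $\mathcal C$, reduce full faithfulness to the Hom-spaces between $V^{\otimes r}\otimes V^{*\otimes s}$'s, and identify those via (mixed) Schur--Weyl duality with the matchings basis of $\Rep_0(GL_t)$, matching loop contributions ($n\mapsto t$) along the way. The paper's proof does one additional thing that your write-up omits: it first checks that the data listed in the hypotheses can actually be produced. In a) this means showing $\prod_{\mathcal F} i$ is transcendental so one may postcompose with an automorphism of $\mathbb C$ to hit $t$; in b) it means proving the existence of infinitely many pairs $(t_n,p_n)$ with the $p_n$ distinct primes, $t_n\to\infty$, and $q(t_n)\equiv 0\bmod p_n$, which the paper establishes by a counting argument showing that infinitely many primes divide values $q(n)$. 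Strictly, the theorem is conditional on this data, so your proof is not incomplete as a verification of the stated implication, but the existence step is part of what makes the theorem non-vacuous and is worth at least a remark. One more small point: when you invoke "characteristic-free mixed Schur--Weyl duality" in b), the paper's precise reference is to the fact that $k[S_r]\to\End_{GL_N}((k^N)^{\otimes r})$ is an isomorphism for $N>r$ over any infinite field $k$ (König--Xi); the walled case follows from this plus the first fundamental theorem of invariant theory, which is indeed characteristic-free in this stable range, so your appeal is justified but should be backed by a citation if written up.
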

\begin{proof}
\textbf{a)} First let's prove that it is indeed possible to fix such an isomorphism. The ultraproduct $\prod_{\mathcal F}i$ is an element of $\mathbb C$. Suppose it is algebraic over $\mathbb Q$, then it should satisfy a monic equation $f$ with coefficients in $\mathbb Q$. Then by \lthm for almost all $i$ we have $f(i) = 0$, but since this is true for infinite number of distinct $i$'s, it follows that $f = 0$. Hence by contradiction we conclude that $\prod_{\mathcal F} i$ is transcendental. Now by fixing an automorphism of $\mathbb C$ over $\mathbb Q$ we may send this transcendental number to $t$.

So we have a tensor category $\widehat{\mathcal C}$ linear over $\mathbb C$, with an object $\prod_{\mathcal F}V_i$ of dimension $t$. Hence by Prop. 1.1.6 we obtain a tensor functor $F:\Rep(GL_t) \to \widehat{\mathcal C}$. Since $\Rep(GL_t)$ is generated by $V$ under taking duals, tensor products, direct sums and direct summands, it follows that the image of $\Rep(GL_t)$ under $F$ is contained in the full subcategory $\mathcal C$ in $\widehat{\mathcal C}$ generated by $V_t$ under taking duals, tensor products, direct sums and direct summands. So we know that $F:\Rep(GL_t) \to \mathcal C$ is essentialy surjective. Now it is enough to prove that it is fully faithful. 

Note that it is enough to prove that 
$$
\prod_{\mathcal F} \Hom_{GL_n}(V_n^{\otimes r} \otimes V_n^{* \otimes s}, V_n^{\otimes p} \otimes V_n^{* \otimes q}) = \Hom_{\mathcal C}(V_t^{\otimes r} \otimes V_t^{* \otimes s}, V_t^{\otimes p} \otimes V_t^{* \otimes q}) \ ,
$$
and that the composition maps are the same. Indeed both categories can be obtained as the Karoubian envelope of the additive envelope of the categories consisting of all $V^{\otimes p} \otimes V^{* \otimes q}$ or  $V_t^{\otimes p} \otimes V_t^{* \otimes q}$ respectively.

From Schur-Weyl duality we know that for a postive integer $r$, another positive integer $n>r$, the algebra $\End_{GL_n}(V_n^{\otimes r}) = (V_n^{\otimes r} \otimes V_n^{*\otimes r})^{GL_n}$ is naturally isomorphic to $\overline{\mathbb Q}[S_r]$. From invariant theory we also know that $(V_n^{\otimes r} \otimes V_n^{*\otimes r})^{GL_n}$ is generated by the elements $\sum_{i_1,\dots,i_r} e_{i_1}\otimes \dots \otimes e_{i_r} \otimes \varepsilon_{i_{\sigma(1)}}\otimes \dots \otimes \varepsilon_{i_{\sigma(r)}}$, where $e_i$ and $\varepsilon_j$ are dual bases of $V_n$ and $V_n^*$ and $\sigma \in S_r$.

Since $\Hom_{GL_n}(V_n^{\otimes r} \otimes V_n^{* \otimes s}, V_n^{\otimes p} \otimes V_n^{* \otimes q}) = \left(V_n^{\otimes(r+q)}\otimes V_n^{*\otimes(s+p)}\right)^{GL_n}$, it follows that it is non-zero only if $r+q=p+s$ and for sufficently large $n$ has dimension $(r+q)!$ . Now rewriting the generating elements above using evaluation and coevaluation maps, we may explicitly describe these elements. It is easy to see that these descriptions can be obtained by looking at the diagram of a fixed matching and interpreting lines connecting different rows as identity arrows, lines connecting elements in the upper row as evaluation maps and lines connecting elements in the lower row as co-evaluation maps. Since the number of matchings is $(r+q)!$, it follows that for sufficiently large $n$ they form a basis. Since they form the basis of $\Hom_{\Rep(GL_t)}(V^{\otimes r} \otimes V^{* \otimes s}, V^{\otimes p} \otimes V^{* \otimes q})$ by definition, the equality of $\Hom$-spaces follows (since taking ultraproduct of the same finite-dimensional vector space over $\overline{\mathbb Q}$ gives you the same vector space tensored with $\mathbb C$). 

It remains to check that the composition maps are the same. In the  $GL_n$ case they are given also by vertical concatenation of diagrams. The only thing which we need to check is what happens to the loops obtained in this way. But since each loop through the properties of tensor categories can be simplified to $coev \circ ev$, it follows that each loop gives us a multiplication by $n$, hence for the composition maps in the ultraproduct we get the multiplication by $t$, so the composition law in $\mathcal{C}$ and $\Rep(GL_t)$ is the same and we are done.

\textbf{b)} First, again, we need to explain how we can fix such an isomorphism.
Let us prove that there is indeed an infinite number of pairs $t_n$ and $p_n$ such that $q(t_n) = 0 \mod p_n$. It is enough to show that there are infinite number of primes dividing the numbers $q(n)$ (if in this case the sequence $t_n$ is bounded, it follows that some $q(t_n)$ is divisible by an infinite number of prime numbers, which is absurd). Suppose it is not so, and there are only $k$ such primes. Fix $C$ such that  we have $q(n) < C \cdot n^{\deg(q)}$ for all positive  integer $n$. Denote by $Q$ the number of integers of the form $q(n)$ for $n \in \mathbb Z_{\ge 0}$ such that $q(n)<N$. By the above inequality this number is at least $\frac{1}{C}\cdot N^{\frac{1}{\deg(q)}}$. On the other hand the number $P$ of  numbers less than $N$ divisible only by $k$ fixed primes  is less or equal to $\log_2(N)^k$, since each prime number is at least $2$. Hence for big enough $N$ we have $P<Q$, which contradicts the hypothesis\footnote{This proof is also written by Nate Harman in his paper, see the proof of Prop. 2.2 in \cite{harman2016deligne}}. 

So we indeed can choose such unbounded sequences $t_n$ and $p_n$. Now by \lthm it follows that $\prod_{\mathcal F}t_n$ is a root of $q$ in $\mathbb C$, so by composing with an automorphism of $\mathbb C$ we may assume that under an isomorphism $\mathbb C \simeq \prod_{\mathcal F} \overline{\mathbb F}_{p_n}$, $\prod_{\mathcal F}t_n$ maps to $t$. 

The remaining part of the proof is completely the same since the relevant part of Schur-Weyl duality holds over an algebraically closed field of any characteristic. Indeed the natural isomorphism $k[S_r] = End_{GL_n}((k^n)^{\otimes r})$ for $n>r$ holds over an infinite field $k$ of any characteristic, see Thm. 1.2 in \cite{konig2001double}.

\end{proof}

Through this isomorphism we obtain a connection between irreducible objects in $\textbf{Rep}(GL_n)$ or $\textbf{Rep}_p(GL_n)$ and irreducible objects of $\Rep(GL_t)$. As a first ingredient for this we need to relate bipartitions with partitions.

\begin{def0}
For a bipartition $\lambda$ denote by $\lambda|_n$ the weight equal to  
{\footnotesize $(\lambda|_n)_i = \lambda^\bullet_i - \lambda^\circ_{n-i+1}$}. Also for a complex number $c\in \mathbb C$ denote by $c|_n$ elements of $\overline{\mathbb Q}$ or $\overline{\mathbb F}_{p_n}$ (which one will be clear in the context) such that $\prod_{\mathcal F}c|_n = c$.
\end{def0}

To show how this works let's describe the sequence of objects of $\textbf{Rep}(GL_n)$ corresponding to $V(\lambda)$. Fix $\lambda$; as we know, $V(\lambda)$ is a direct summand in $ V^{\otimes |\lambda^\bullet|} \otimes V^{*\otimes |\lambda^\circ|}$. Denote by $B_{r,s}(t) = \End(V^{\otimes r} \otimes V^{*\otimes s})$  the walled Brauer algebra. For $n$  big enough $B_{r,s}(n) = \End(V_n^{\otimes r} \otimes V_n^{*\otimes s})$ by \lthm and proof of Thm. 1.3.1. So we can take the direct summand corresponding to the same idempotent as $\lambda$. The corresponding representation is going to be isomorphic to $V(\lambda|_{n})$. So it follows that $\prod_{\mathcal F}V(\lambda|_{n}) = V(\lambda)$.

The same is true for finite characteristic case since, as we show below, for $n$ big enough $V(\lambda|_{t_n})$ is going to be irreducible.

\section{Yangians in integer rank}

In the first two parts of this section we will work over an algebraically closed field of characteristic $0$, more specifically $\overline{\mathbb Q}$ or $\mathbb C$. Here we will briefly recall the definition and basic properties of the Yangian, and then state the classification theorems for finite-dimensional representations of Yangians for general and special linear groups. For a more detailed study see \cite{molev2007yangians} or \cite{chari1995guide} (we will mostly follow \cite{molev2007yangians}).

\subsection{The Yangians $Y(\mathfrak{gl}_n)$ and $Y(\mathfrak {sl}_n)$.}

\begin{def0} (Sections 1.1-1.2 in \cite{molev2007yangians})
$Y(\mathfrak{gl}_n)$ is the associative algebra generated by $t^{(k)}_{ij}$, where $1 \le i,j \le n$ and $k > 0$ with the following defining relation in \\ $Y(\mathfrak gl_n) \otimes \End( \mathbb C^n) \otimes \End (\mathbb C^n)[[u^{-1},v^{-1}]]$:
\begin{equation}
    (u-v)R(u-v)T^I(u)T^{II}(v) = (u-v)T^{II}(v)T^I(u)R(u-v) \ ,
\end{equation}
where $T^{\alpha}(u) = 1 + \sum_{k>0, i,j} t^{(k)}_{ij}\otimes e^{\alpha}_{ij} \ u^{-k} $ and $R(u) = 1 +\frac{\sigma}{u}$ with $\sigma$ being the operator permuting first and second copy of $\mathbb C^n$.
\end{def0}

Below we list properties of this algebra which are going to be important for us:
\begin{prop} (Sections 1.1-1.7 in \cite{molev2007yangians})

\textbf{a)} $Y(\mathfrak{gl}_n)$ is a Hopf algebra with $S(T(u)) = T^{-1}(u)$ and $\Delta(T(u)) = T^I(u)T^{II}(u)$.

\textbf{b)} There is an algebra homomorphism $ev:Y(\mathfrak{gl}_n)\rightarrow U(\mathfrak{gl}_n)$ given by $T(u)\mapsto R(u)$. 

\textbf{c)} There is a Hopf algebra embedding $i:U(\mathfrak{gl}_n) \rightarrow Y(\mathfrak{gl}_n)$ given by $E_{ij} \mapsto t^{(1)}_{ij}$.

\textbf{d)} The center $Z(Y(\mathfrak{gl}_n))$ is generated by algebraically independent elements given by the coefficients of a certain series $\textrm{qdet} \ T(u)$ defined as:
$$
\textrm{qdet} \ T(u) = \sum_{s \in S_n} sgn(s) \cdot t_{1,s(1)}(u-n+1)\dots t_{n,s(n)}(u) \ .
$$

\textbf{e)} There is a family of automorphisms of $Y(\mathfrak{gl}_n)$ given by $T(u) \mapsto f(u)T(u)$ for any $f(u) \in 1 +u^{-1}\mathbb C[[u^{-1}]]$.
\end{prop}

From Prop. 2.1.2(b) it follows that any $\mathfrak{gl}_n$ representation has a structure of a $Y(\mathfrak{gl}_n)$ representation, such representations are called evaluation modules. Also from Prop. 2.1.2(c) it follows conversely that any $Y(\mathfrak{gl}_n)$ representation can be regarded as a $\mathfrak{gl}_n$ representation with respect to the $t^{(1)}_{ij}$-action.

Now we can define the Yangian of the special linear group following Section 1.8 in \cite{molev2007yangians}.
\begin{def0}
$Y(\mathfrak{sl}_n)$ is the sublagebra of $Y(\mathfrak{gl}_n)$ of elements invariant under all automorphisms specified in Prop. 2.1.2(e).
\end{def0}

Below are the properties of this algebra:
\begin{prop}

\textbf{a)} $Y(\mathfrak{sl}_n)$ is a Hopf subalgebra in $Y(\mathfrak{gl}_n)$

\textbf{b)} There is a Hopf algebra embedding $i:U(\mathfrak{sl}_n) \rightarrow Y(\mathfrak{sl}_n)$ given by the restriction of $i$ from Prop.  2.1.2c).

\textbf{c)} $Y(\mathfrak{gl}_n) = Y(\mathfrak{sl}_n) \otimes Z(Y(\mathfrak{gl}_n))$ and thus  $Y(\mathfrak{sl}_n) = Y(\mathfrak{gl}_n)/(\textrm{qdet} \ T(u)-1)$.
\end{prop}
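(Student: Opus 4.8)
The plan is to establish Proposition 2.1.4 in three parts, all of which reduce to standard structural facts about $Y(\mathfrak{gl}_n)$ that one can find in \cite{molev2007yangians} (Sections 1.8--1.11), but which I would like to reprove cleanly so that everything generalizes to the ultraproduct setting later. The central object is the canonical series decomposition $T(u) = z(u)\,\widetilde T(u)$, where $z(u) \in 1 + u^{-1}\mathbb C[[u^{-1}]]$ is a central series (essentially a normalized $\mathrm{qdet}$) and $\widetilde T(u)$ has quantum determinant $1$; the entries of $\widetilde T(u)$ generate $Y(\mathfrak{sl}_n)$.

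First I would treat part (c), since (a) and (b) will follow from it with little extra work. The key point is that $\mathrm{qdet}\,T(u)$ is a series $d(u) = 1 + d_1 u^{-1} + d_2 u^{-2} + \cdots$ whose coefficients $d_1, d_2, \dots$ are algebraically independent central elements (Prop. 2.1.2(d)), and that the family of automorphisms $\mu_f : T(u) \mapsto f(u) T(u)$ from Prop. 2.1.2(e) acts on the quantum determinant by $\mathrm{qdet}\,T(u) \mapsto f(u)^n\,\mathrm{qdet}\,T(u)$. Since $f \mapsto f^n$ is a bijection of $1 + u^{-1}\mathbb C[[u^{-1}]]$ (one can solve for the $n$-th root of a series with constant term $1$ uniquely by a recursion on coefficients, using that $n$ is invertible in $\mathbb C$), there is for each choice of central series a unique $\mu_f$ realizing it. Hence: given the central subalgebra $Z = Z(Y(\mathfrak{gl}_n)) = \mathbb C[d_1, d_2, \dots]$, every element of $Y(\mathfrak{gl}_n)$ can be uniquely written, after rescaling $T(u)$ by the unique $f$ with $f(u)^n d(u) = 1$, in terms of the entries of $\widetilde T(u) := d(u)^{-1/n}\,T(u)$, and those entries are precisely the invariants under all $\mu_f$, i.e. they generate $Y(\mathfrak{sl}_n)$. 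This gives a multiplication map $Y(\mathfrak{sl}_n) \otimes Z \to Y(\mathfrak{gl}_n)$; I would check it is an isomorphism of algebras by exhibiting the inverse $T(u) \mapsto d(u)^{1/n} \otimes 1 \cdot \widetilde T(u)$, noting that $Z$ is a polynomial algebra central in $Y(\mathfrak{gl}_n)$ so the tensor product is an honest algebra, and that injectivity follows from the algebraic independence of the $d_i$ together with the fact that $\widetilde T(u)$ has $\mathrm{qdet}$ equal to $1$ (so no nontrivial central series is hidden among its entries). The quotient statement $Y(\mathfrak{sl}_n) = Y(\mathfrak{gl}_n)/(\mathrm{qdet}\,T(u) - 1)$ is then immediate: killing $d(u) - 1$ kills $Z$ down to $\mathbb C$ and leaves $Y(\mathfrak{sl}_n)$.

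For part (a), I must show $Y(\mathfrak{sl}_n)$ is a Hopf subalgebra, i.e. that it is closed under the coproduct $\Delta(T(u)) = T^I(u) T^{II}(u)$, the antipode, and the counit. The clean argument: the automorphisms $\mu_f$ are compatible with the coalgebra structure in the sense that $(\mu_f \otimes \mu_f)\circ \Delta = \Delta \circ \mu_f$ (since $\Delta(f(u)T(u)) = f(u)^2 T^I(u)T^{II}(u)$... — here one must be slightly careful: $\mu_f$ is an algebra automorphism and $\Delta(\mu_f(T(u))) = f(u)\,T^I(u)T^{II}(u)$ whereas $(\mu_f\otimes\mu_f)\Delta(T(u)) = f(u)^2\,T^I(u)T^{II}(u)$, so the two differ; the correct statement is that the Hopf ideal $(d(u)-1)$ is stable, or equivalently that $\widetilde T(u)$ is grouplike up to the central correction). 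The slicker route, which I would actually take, is via part (c): $Y(\mathfrak{sl}_n) = Y(\mathfrak{gl}_n)/(\mathrm{qdet}\,T(u) - 1)$, and since $\mathrm{qdet}\,T(u)$ is grouplike, i.e. $\Delta(\mathrm{qdet}\,T(u)) = \mathrm{qdet}\,T^I(u)\cdot\mathrm{qdet}\,T^{II}(u)$ and $S(\mathrm{qdet}\,T(u)) = \mathrm{qdet}\,T(u)^{-1}$ (standard, Prop. 1.9.x in \cite{molev2007yangians}), the ideal generated by $\mathrm{qdet}\,T(u) - 1$ is a Hopf ideal, so the quotient is a Hopf algebra and the quotient map is a Hopf algebra morphism; identifying the quotient with the subalgebra $Y(\mathfrak{sl}_n) \subset Y(\mathfrak{gl}_n)$ via (c) transports the Hopf structure and shows the inclusion is a morphism of Hopf algebras. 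Part (b) then follows because the embedding $i : U(\mathfrak{gl}_n) \to Y(\mathfrak{gl}_n)$ of Prop. 2.1.2(c) sends $E_{ij} \mapsto t^{(1)}_{ij}$, and under $i$ the Lie subalgebra $\mathfrak{sl}_n$ (traceless matrices) maps into $Y(\mathfrak{sl}_n)$: one checks that $t^{(1)}_{ij}$ for $i \ne j$ and $t^{(1)}_{ii} - t^{(1)}_{jj}$ are fixed by every $\mu_f$ because the effect of $\mu_f$ on $t^{(1)}_{ij}$ is to add a scalar multiple of $\delta_{ij}$ (the coefficient of $u^{-1}$ in $f$), which cancels in these combinations; so $i(U(\mathfrak{sl}_n)) \subseteq Y(\mathfrak{sl}_n)$, and it is a Hopf subalgebra embedding as a restriction of $i$.

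The main obstacle is the bookkeeping around the $n$-th root operation: one needs $n$ invertible to extract $d(u)^{1/n}$, which is fine over $\mathbb C$ or $\overline{\mathbb Q}$ but will require care in Section 3 when passing to characteristic $p$ (one needs $p \nmid n$), and I expect that restriction to propagate. Within this section, the only genuinely delicate verification is that the multiplication map $Y(\mathfrak{sl}_n) \otimes Z(Y(\mathfrak{gl}_n)) \to Y(\mathfrak{gl}_n)$ is injective, i.e. that $Y(\mathfrak{sl}_n)$ and $Z$ are "independent" inside $Y(\mathfrak{gl}_n)$; this is where I would lean on the explicit description of $Y(\mathfrak{sl}_n)$ via the entries of $\widetilde T(u)$ plus the algebraic independence of the coefficients of $\mathrm{qdet}\,T(u)$, so that a relation in the image forces a polynomial relation among the $d_i$. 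Everything else is formal manipulation with the Hopf structure and the known grouplike behaviour of the quantum determinant, which I will cite rather than rederive.
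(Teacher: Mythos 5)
The paper itself does not supply a proof here; it simply cites Section~1.8 of \cite{molev2007yangians}, so there is no internal argument to compare against. Assessing your proof on its own merits, there are two genuine problems. First, the transformation law for the quantum determinant is wrong. Reading off the paper's own displayed formula
$$
\textrm{qdet}\ T(u) = \sum_{s\in S_n}\textrm{sgn}(s)\,t_{1,s(1)}(u-n+1)\cdots t_{n,s(n)}(u),
$$
each factor carries its own shift of $u$, so
$$
\mu_f\bigl(\textrm{qdet}\ T(u)\bigr) = f(u-n+1)f(u-n+2)\cdots f(u)\,\textrm{qdet}\ T(u),
$$
not $f(u)^n\,\textrm{qdet}\ T(u)$. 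The overall strategy survives because the map $f\mapsto f(u)f(u-1)\cdots f(u-n+1)$ is still a bijection on $1+u^{-1}\mathbb C[[u^{-1}]]$ (again using that $n$ is invertible), but the normalization you want is not $d(u)^{-1/n}T(u)$; it is $h(u)T(u)$ with $h$ the solution of $h(u)h(u-1)\cdots h(u-n+1)=d(u)^{-1}$. You should also say openly that this $h$ has coefficients in the center $Z$, not in $\mathbb C$, so the ``rescaling'' you perform is not one of the $\mu_f$ from Prop.~2.1.2(e); it is a formally different (though still legitimate) device, and conflating the two obscures what is actually being used.

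Second, the ``slicker route'' to part (a) does not prove what is claimed. Transporting the Hopf structure from $Y(\mathfrak{gl}_n)/(\textrm{qdet}\ T(u)-1)$ equips $Y(\mathfrak{sl}_n)$ with \emph{some} Hopf structure, but being a Hopf \emph{sub}algebra is a statement about the ambient coproduct: $\Delta_{\mathfrak{gl}}\bigl(Y(\mathfrak{sl}_n)\bigr)\subseteq Y(\mathfrak{sl}_n)\otimes Y(\mathfrak{sl}_n)$ inside $Y(\mathfrak{gl}_n)^{\otimes 2}$, and that inclusion is not a formal consequence of the quotient being a Hopf algebra. You actually spotted the relevant wrinkle — $\Delta\circ\mu_f\neq(\mu_f\otimes\mu_f)\circ\Delta$ — and then backed away from it, but the fix is right there: checking on $T$-generators gives $(\mu_f\otimes\mu_g)\circ\Delta=\Delta\circ\mu_{fg}$, so for any $x$ with $\mu_h(x)=x$ for all $h$ one has $(\mu_f\otimes\mu_g)\Delta(x)=\Delta(\mu_{fg}(x))=\Delta(x)$; combined with the decomposition $Y(\mathfrak{gl}_n)^{\otimes 2} = \bigl(Y(\mathfrak{sl}_n)\otimes Z\bigr)^{\otimes 2}$ from part (c) and the fact that $Z^{\mu}=\mathbb C$, this identifies the bi-invariants as $Y(\mathfrak{sl}_n)\otimes Y(\mathfrak{sl}_n)$ and gives the inclusion directly, replacing the whole transport argument. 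Part (b) is fine as written: the computation $\mu_f(t^{(1)}_{ij})=t^{(1)}_{ij}+f_1\delta_{ij}$ is correct and does show the image of $\mathfrak{sl}_n$ lands in the invariants.
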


\subsection{Finite-dimensional representation of $Y(\mathfrak{gl}_n)$ and $Y(\mathfrak {sl}_n)$.}

One can classify all finite-dimensional irreducible representations of the Yangians defined above. This can be done by using an analog of the Cartan decomposition for the Yangian, introduction of  an analog of category $\mathcal O$ and by reduction to the $Y(\mathfrak{gl}_2)$ case which can be solved more or less explicitly. More precisely, one can show that each finite-dimensional module has a highest weight vector with a weight given by $(\lambda_1(u),\dots, \lambda_n(u))$, which determine the action of $t_{ii}^{(n)}$ on this vector.
The result of this is the classification theorem originally due to Drinfeld. To state it we need the following definition:
\begin{def0}
Evaluation modules of $Y(\mathfrak{gl}_n)$ or $Y(\mathfrak{sl}_n)$ are irreducible representations of $\mathfrak{gl}_n$ with a structure of the representation of the Yangian given by the morphism $ev:Y(\mathfrak{gl}_n)\to U(\mathfrak{gl}_n)$.
\end{def0}

\begin{rem}
  Note that there are slight differences in the definitions of evaluation modules in \cite{chari1995guide} and \cite{molev2007yangians}. Chari and Pressley work with $Y(\mathfrak{sl}_n)$ and define maps $ev_z:Y(\mathfrak{sl}_n) \to U(\mathfrak{sl}_n)$ in order to construct modules $V_z(\mu)$, which are equal to $V(\mu)$, where $\mu$ is integral weight of $\mathfrak{sl}_n$ with the structure of a $Y(\mathfrak{sl}_n)$-module given by $ev_z$. Now it is easy to see that the map $ev'_z:Y(\mathfrak{sl}_n) \to U(\mathfrak{gl}_n)$ which is equal to the composition of $ev_z$ with an inclusion of $U(\mathfrak{sl}_n)\to U(\mathfrak{gl}_n)$ is equal to the composition $ev \circ\tau_z \circ j$, where $\tau_z$ is the automorphism of $Y(\mathfrak{gl}_n)$ given by $T(u) \mapsto T(u-z)$ and $j$ is the inclusion of $Y(\mathfrak{sl}_n)$ to $Y(\mathfrak{gl}_n)$. Note that $ev \circ \tau_z$ sends $T(u)$ to $1 + \frac{\sigma}{u-z}$. So if we take $R_f$ to be automorphism from Prop. 2.1.2(e) with $f = 1-z/u$ it follows that $ev \circ R_f \circ \tau_z$ maps $T(u)$ to $1+\frac{R-z}{u}$. So if $\psi_z$ is the automorphism of $U(\mathfrak{gl}_n)$ which sends $\sigma$ to $\sigma-z$ (or, equivelantly, on representations we just tensor with a one-dimensional representation of $\mathfrak{gl}_n$), then we have $ev \circ R_f \circ \tau_z  = \psi_z \circ ev$. But since $Y(\mathfrak{sl}_n)$ is invariant under $R_f$, it follows that $ev'_z =  \psi_z \circ ev \circ j$. But this means that to obtain the modules $V_z(\mu)$ we can equivalently consider all possible $\mathfrak{gl}_n$ modules $V(\lambda)$ with $\lambda$ being a weight of $\mathfrak{gl}_n$ equal to $\mu$ when restricted to $\mathfrak{sl}_n$. So if we consider all $\mathfrak{gl}_n$ modules, to get all evaluation modules, so it is enough to use only the map $ev$, which Molev does in \cite{molev2007yangians} and we will do here.
\end{rem}

Now we are ready to state the theorem (Cor. 3.4.2 in \cite{molev2007yangians}):
\begin{thm}
Irreducible finite dimensional representations of $Y(\mathfrak{gl}_n)$ are in $1-1$ correspondence with  tuples $(f(u), P_1(u), \dots, P_{n-1}(u))$, where $P(u)$ are monic polynomials in $u$ and $f(u) \in 1 + u^{-1}\mathbb C[[u^{-1}]]$. Moreover, $\frac{\lambda_i}{\lambda_{i+1}} = \frac{P_i(u+1)}{P_i(u)}$. Also up to tensoring with 1-dimensional representations each irreducible finite-dimensional representation is a subquotient of the tensor product of evaluation modules.
\end{thm}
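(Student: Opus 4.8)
This is the classical theorem of Drinfeld (in the form given by Molev), and the plan is to develop a highest-weight theory for $Y(\mathfrak{gl}_n)$, reduce the classification of highest weights to rank one, and then realize all finite-dimensional modules inside tensor products of evaluation modules. First I would write $t_{ij}(u)=\delta_{ij}+\sum_{k\ge 1}t_{ij}^{(k)}u^{-k}$ and introduce the subalgebras $Y^+$, $Y^0$, $Y^-$ generated by the $t_{ij}^{(k)}$ with $i<j$, $i=j$, $i>j$ respectively; the RTT relations together with the PBW theorem for $Y(\mathfrak{gl}_n)$ give a triangular decomposition $Y(\mathfrak{gl}_n)\cong Y^-\otimes Y^0\otimes Y^+$ of vector spaces, with $Y^0$ commutative. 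Under the embedding $U(\mathfrak{gl}_n)\hookrightarrow Y(\mathfrak{gl}_n)$ of Prop. 2.1.2(c) each $t_{ij}^{(k)}$ has $\mathfrak{gl}_n$-weight $\varepsilon_i-\varepsilon_j$, so in a finite-dimensional module $V$ any maximal $\mathfrak{gl}_n$-weight space is annihilated by all of $Y^+$; being $Y^0$-stable and $Y^0$ commutative, it contains a common eigenvector $\xi$, whose $t_{ii}(u)$-eigenvalues assemble into a tuple $\lambda(u)=(\lambda_1(u),\dots,\lambda_n(u))$ with each $\lambda_i(u)\in 1+u^{-1}\mathbb{C}[[u^{-1}]]$. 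Then $V=Y^-\xi$ is a highest-weight module, it is the unique irreducible quotient $L(\lambda(u))$ of the universal highest-weight module $M(\lambda(u))$, and $L(\lambda(u))$ determines and is determined by $\lambda(u)$ — so we get an injection from finite-dimensional irreducibles into tuples $\lambda(u)$ of such series.

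Next I would separate off the series $f(u)$. The automorphisms $T(u)\mapsto f(u)T(u)$ of Prop. 2.1.2(e) carry $L(\lambda(u))$ to $L(f(u)\lambda(u))$ (equivalently, tensoring with a one-dimensional module), so finite-dimensionality depends only on the ratios $\rho_i(u):=\lambda_i(u)/\lambda_{i+1}(u)$, and conversely the orbit of a finite-dimensional $L(\lambda(u))$ under these twists contributes exactly a $\bigl(1+u^{-1}\mathbb{C}[[u^{-1}]]\bigr)$-family, recorded by $f(u)$ — this is the content of $Y(\mathfrak{gl}_n)=Y(\mathfrak{sl}_n)\otimes Z(Y(\mathfrak{gl}_n))$ (Prop. 2.1.5(c)). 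So it remains to prove that the admissible tuples of ratios are exactly $\rho_i(u)=P_i(u+1)/P_i(u)$ for monic polynomials $P_i$. For necessity I would use the rank-one reduction: permuting the standard basis of $\mathbb{C}^n$ yields automorphisms of $Y(\mathfrak{gl}_n)$, so for each $i$ the $t_{ab}^{(k)}$ with $a,b\in\{i,i+1\}$ generate a copy of $Y(\mathfrak{gl}_2)$, and the submodule $Y(\mathfrak{gl}_2)\xi\subseteq V$ is a finite-dimensional highest-weight $Y(\mathfrak{gl}_2)$-module of highest weight $(\lambda_i(u),\lambda_{i+1}(u))$. Everything then comes down to the $n=2$ case: a finite-dimensional $L(\mu_1(u),\mu_2(u))$ over $Y(\mathfrak{gl}_2)$ forces $\mu_1(u)/\mu_2(u)=P(u+1)/P(u)$ for a monic $P$.

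The $Y(\mathfrak{gl}_2)$ analysis is the crux and, I expect, the main obstacle. Writing $A(u),B(u),C(u),D(u)$ for $t_{11}(u),t_{12}(u),t_{21}(u),t_{22}(u)$, the RTT relations become explicit commutators; one shows $L(\mu(u))$ is spanned by ordered monomials $t_{21}^{(k_1)}\cdots t_{21}^{(k_m)}\xi$, computes the action of $A(u)$ and $D(u)$ on $t_{21}^{(k)}\xi$ and of $B(u)$ back down, and extracts from finite-dimensionality a monic $P(u)$ with $\mu_1(u)/\mu_2(u)=P(u+1)/P(u)$, together with a description of $L(\mu(u))$ as the irreducible quotient of a tensor product of one-dimensional twists of evaluation modules. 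For sufficiency I would run this in reverse: any monic $P$ factors, the corresponding linear factors (or strings) are realized by evaluation modules, and their tensor product is finite-dimensional with the prescribed highest weight as its top constituent. For general $n$, given $(P_1,\dots,P_{n-1})$ one assembles a tensor product of evaluation modules $L(\nu^{(j)})_{a_j}$ of $\mathfrak{gl}_n$ whose individual Drinfeld data are the factors occurring in the $P_i$; since highest weights multiply under tensor products of highest-weight modules (a quick check with $\Delta(t_{ij}(u))=\sum_k t_{ik}(u)\otimes t_{kj}(u)$), the unique irreducible top constituent of this tensor product is $L(\lambda(u))$ with Drinfeld polynomials $(P_1,\dots,P_{n-1})$ — finite-dimensional and a subquotient of a tensor product of evaluation modules, as claimed. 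Twisting by $T(u)\mapsto f(u)T(u)$ then inserts the arbitrary $f(u)$, yielding the bijection and the final assertion. Besides the necessity half of the rank-one case (the genuine content of the theorem), the step requiring care is that every finite-dimensional $V$ has a $Y^+$-singular common $Y^0$-eigenvector generating $V$; the argument via a maximal $\mathfrak{gl}_n$-weight space handles it, but one must verify that ``maximal'' can be arranged (e.g. by maximizing a generic positive functional on the finite weight set) and that the resulting $\xi$ indeed generates all of $V$.
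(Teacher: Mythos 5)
The paper does not prove this theorem; it is a recall, cited verbatim as Cor.\ 3.4.2 of \cite{molev2007yangians}, and the classification is taken as known input for the later positive-characteristic and complex-rank arguments. Your sketch is exactly the standard proof from that reference (triangular decomposition and highest-weight theory, separating off $f(u)$ via $Y(\mathfrak{gl}_n)=Y(\mathfrak{sl}_n)\otimes Z(Y(\mathfrak{gl}_n))$, rank-one reduction to $Y(\mathfrak{gl}_2)$ for necessity, and tensor products of evaluation modules for sufficiency), so it agrees with the source the paper relies on.
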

\begin{def0}
The polynomials $P_i$ are called the Drinfeld polynomials.
\end{def0}
We have an analogous result for $Y(\mathfrak{sl}_n)$ (following Cor. 3.4.8. in \cite{molev2007yangians}):
\begin{thm}
Finite dimensional representations of $Y(\mathfrak{sl}_n)$ are in $1-1$ correspondence with tuples $(P_1(u), \dots, P_{n-1}(u))$ of Drinfeld polynomials. Moreover each such representation is a subquotient of a tensor product of  evaluation modules.
\end{thm}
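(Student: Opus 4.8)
The plan is to deduce this theorem from the classification for $Y(\mathfrak{gl}_n)$ (Theorem 2.2.2) together with the decomposition $Y(\mathfrak{gl}_n) = Y(\mathfrak{sl}_n) \otimes Z(Y(\mathfrak{gl}_n))$ of Proposition 2.1.4(c), by restricting representations along $Y(\mathfrak{sl}_n) \hookrightarrow Y(\mathfrak{gl}_n)$. I will show that restriction induces a bijection between, on one side, the orbits of finite-dimensional irreducible $Y(\mathfrak{gl}_n)$-modules under tensoring by one-dimensional modules, and, on the other side, the isomorphism classes of finite-dimensional irreducible $Y(\mathfrak{sl}_n)$-modules; combined with the fact that, by Theorem 2.2.2, each such orbit consists precisely of the modules sharing a fixed tuple $(P_1, \dots, P_{n-1})$ of Drinfeld polynomials, with arbitrary series $f(u)$, this yields the theorem, and the statement about evaluation modules will follow by restricting the corresponding statement of Theorem 2.2.2.

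First I would analyze the one-dimensional modules. For $g(u) \in 1 + u^{-1}\mathbb{C}[[u^{-1}]]$ let $\mathbb{C}_g$ be the one-dimensional $Y(\mathfrak{gl}_n)$-module on which $T(u)$ acts by $g(u)$; this is the trivial module with its action precomposed by the automorphism $R_g\colon T(u) \mapsto g(u)T(u)$ of Proposition 2.1.2(e), and I write $M^{R_g}$ for the $R_g$-twist of a module $M$. Using $\Delta(T(u)) = T^{I}(u)T^{II}(u)$ one checks that $V \otimes \mathbb{C}_g \cong V^{R_g}$, so tensoring by $\mathbb{C}_g$ multiplies a highest weight $(\lambda_1(u), \dots, \lambda_n(u))$ entrywise by $g(u)$. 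Hence, under the parametrization of Theorem 2.2.2, this operation leaves every ratio $\lambda_i/\lambda_{i+1}$ — and therefore every $P_i$ — unchanged while scaling the remaining series $f(u)$ by $g(u)$. As $g$ ranges over all of $1 + u^{-1}\mathbb{C}[[u^{-1}]]$, the orbit of a module with data $(f, P_1, \dots, P_{n-1})$ is exactly the set of modules with data $(f', P_1, \dots, P_{n-1})$, $f'$ arbitrary; so these orbits are in bijection with tuples $(P_1, \dots, P_{n-1})$.

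Next I would show that restriction descends to a bijection from these orbits onto the finite-dimensional irreducibles of $Y(\mathfrak{sl}_n)$. If $V$ is a finite-dimensional irreducible $Y(\mathfrak{gl}_n)$-module then, the ground field being algebraically closed, $Z(Y(\mathfrak{gl}_n))$ acts on $V$ through a character (Schur's lemma), so by Proposition 2.1.4(c) the module $V$ is just $V|_{Y(\mathfrak{sl}_n)}$ with the center acting through that character; in particular $V|_{Y(\mathfrak{sl}_n)}$ is irreducible, and since $R_g$ fixes $Y(\mathfrak{sl}_n)$ pointwise, restriction is constant on orbits. Every finite-dimensional irreducible $W$ of $Y(\mathfrak{sl}_n)$ arises this way, by extending its action to $Y(\mathfrak{gl}_n) = Y(\mathfrak{sl}_n) \otimes Z(Y(\mathfrak{gl}_n))$ with the polynomial center acting through an arbitrary character. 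For injectivity on orbits: if $V|_{Y(\mathfrak{sl}_n)} \cong V'|_{Y(\mathfrak{sl}_n)} =: W$, then both $V$ and $V'$ are $W$ with $Z(Y(\mathfrak{gl}_n))$ acting by characters recorded, via Proposition 2.1.2(d), by the series $z(u), z'(u) \in 1 + u^{-1}\mathbb{C}[[u^{-1}]]$ through which $\mathrm{qdet}\, T(u)$ acts; since $R_g$ multiplies $\mathrm{qdet}\, T(u)$ by $g(u)g(u-1)\cdots g(u-n+1)$ and the map $g(u) \mapsto \prod_{j=0}^{n-1} g(u-j)$ of $1 + u^{-1}\mathbb{C}[[u^{-1}]]$ is a bijection — here characteristic zero ($n \neq 0$) is used, solving for the coefficients of $g$ recursively — we may choose $g$ so that $V^{R_g}$ has $\mathrm{qdet}$-eigenvalue $z'$; then $V^{R_g}$ and $V'$ have the same restriction $W$ and the same central character, hence $V^{R_g} \cong V'$, so $V$ and $V'$ lie in one orbit. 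This gives the bijection, and together with the previous paragraph, the classification by $(P_1, \dots, P_{n-1})$.

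For the last assertion I would restrict the corresponding statement of Theorem 2.2.2: lift $W$ to $V$ over $Y(\mathfrak{gl}_n)$ as above; then $V \otimes \mathbb{C}_g$ is, for some $g$, a subquotient of a tensor product of evaluation modules of $Y(\mathfrak{gl}_n)$. Applying the exact monoidal restriction functor, and using that $\mathbb{C}_g$ restricts to the trivial $Y(\mathfrak{sl}_n)$-module (since $R_{g^{-1}}$ fixes $Y(\mathfrak{sl}_n)$ and trivializes $\mathbb{C}_g$) and that the restriction of an evaluation module of $Y(\mathfrak{gl}_n)$ is, by Definition 2.2.1 and the subsequent Remark, an evaluation module of $Y(\mathfrak{sl}_n)$, one concludes that $W$ is a subquotient of a tensor product of evaluation modules of $Y(\mathfrak{sl}_n)$. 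I expect the main obstacle to be the bookkeeping in the third paragraph: pinning down how $R_g$ moves the central character and verifying that this action is transitive enough to collapse the fibers of restriction — which is exactly where the characteristic-zero hypothesis enters — together with the identification $V \otimes \mathbb{C}_g \cong V^{R_g}$. Everything else is a formal consequence of Theorem 2.2.2 and Propositions 2.1.2 and 2.1.4.
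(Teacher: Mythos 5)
Your argument is correct and it is essentially the standard derivation: the paper does not prove this theorem but simply cites Cor.\ 3.4.8 of \cite{molev2007yangians}, and Molev's proof of that corollary proceeds exactly as you do, by combining the $Y(\mathfrak{gl}_n)$ classification with the decomposition $Y(\mathfrak{gl}_n) = Y(\mathfrak{sl}_n)\otimes Z(Y(\mathfrak{gl}_n))$, the bijectivity of $g(u)\mapsto\prod_{j=0}^{n-1}g(u-j)$ on $1+u^{-1}\mathbb{C}[[u^{-1}]]$, and the action of the automorphisms $R_g$ on $\mathrm{qdet}\,T(u)$. Your reconstruction is sound, including the identification $V\otimes\mathbb{C}_g\cong V^{R_g}$, the Schur-lemma reduction showing that a finite-dimensional irreducible over $Y(\mathfrak{sl}_n)\otimes Z$ is determined by its $Y(\mathfrak{sl}_n)$-restriction together with a central character, and the observation that $\mathbb{C}_g$ restricts trivially to $Y(\mathfrak{sl}_n)$ because $R_g$ fixes that subalgebra pointwise.
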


\section{Yangians $Y(\mathfrak{gl}_n)$ and $Y(\mathfrak {sl}_n)$ and their representations in positive characteristic.}

Note that one can define the Yangians for special and general linear groups over $\overline{\mathbb F}_p$ exactly in the same way as above. One can use exactly the same arguments to show that all statements in Prop. 2.1.2, except $(d)$ hold. The problem is that the center of the Yangian in positive characteristic is bigger than in zero characteristic, it is calculated in \cite{brundan2017p}. But nevertheless one can define $Z_{HC}(Y(\mathfrak{gl}_n))$ as the subalgebra generated by the coefficients of the series $qdet \ T(u)$ and then for $p>n$ an analog of Prop. 2.1.3(c) still holds. Indeed, according to Thm. 6.1 in \cite{brundan2017p} we have $Y(\mathfrak{gl}_n) = Y(\mathfrak{sl}_n)\otimes Z_{HC}(Y(\mathfrak{gl}_n))$.

In order to study representations of the Yangian in complex rank for algebraic $t$, according to Thm. 1.3.1(b), we need to know something about the representations of Yangians in positive characteristic for a sufficiently large $p$. To do this, we first need some results about $\mathfrak{gl}_n$ representations in positive characteristic. We will discuss this in the following subsection, then in the next subsection we will return to Yangians in positive characteristic.

\subsection{Representations of $\mathfrak{gl}_n$ and $\mathfrak{sl}_n$ in positive characteristic.}

Fix an irreducible  $\mathfrak{sl}_n$-representation $V(\lambda)$ over $\mathbb C$. This representation is obtained as a quotient of the Verma module $M(\lambda)$ by the subrepresentation generated by $f_i^{\lambda_i+1}v_\lambda$, where $v_{\lambda}$ is a highest weight vector of the Verma module.

Now in positive characteristic, we can also consider the Verma module $M(\lambda,p)$, and all the vectors $f_i^{\lambda_{i}+1}v_\lambda$ are going to be singular, because the coefficients for the action of $e_j$ on these vectors depend only on $\lambda$ and the structure  constants of the Lie algebra, which are all integers. Hence if the coefficients are zero in characteristic 0, they are going to stay zero after reduction. So we still have  quotient modules $V(\lambda,p)$ defined in the same way, called the Weyl modules, and the only question is whether they are irreducible.

First consider the set $Q$ of weights appearing in $V(\lambda)$. We want to find a restriction on $p$ such that no two elements of $Q$ differ by a linear combination of simple roots with coefficients divisible by $p$. Suppose that two weights $\mu_1,\mu_2$ from $Q$ differ by  a lsum $\sum_i p c_i \alpha_i$. Since $Q$ is $W$-symmetric, it follows that $\forall \mu \in Q$ we have $\lambda_1 \ge \mu_i \ge \lambda_n$. Thus  $|(\mu_1)_i - (\mu_2)_i| \le \lambda_1 - \lambda_n$. Since it is at the same time divisible by $p$, we must conclude that for $p > \lambda_1-\lambda_n+1$ we have $\mu_1 = \mu_2$.

Hence if we consider $V(\lambda, p)$ for $p > \lambda_1 - \lambda_n + 1$, it makes sense to speak about it as a highest weight module, since no two elements of $Q$ get identified when we pass to positive characteristic and no two elements are connected by a root which were not connected before passing to positive characteristic. Moreover we can use the following definition of the weight order: $\mu < \mu'$ iff $\exists \mu_i$ such that $\mu_0 = \mu$, $\mu' = \mu_n$ and $\mu_i = \mu_{i+1} - \alpha_j$ for some $j$. This definition is equivalent to the standard one in characteristic zero. In positive characteristic under this definition $\lambda$ is still the maximal weight of $V(\lambda,p)$ since all weights $\lambda + \alpha_j$ are not in $Q$ even modulo $p$.

It is known (Thm. 1 in \cite{humphreys1971modular} or Thm. 2 in \cite{KAC1976136}) than in positive characteristic one can define a generalization of the Harish-Chandra center and central characters $\chi_\lambda$ in such a way that the following theorem holds:
\begin{thm}
For $p \ne 2$ one has $\chi_\lambda = \chi_\mu$ iff $w(\lambda + \rho) = \mu + \rho$  modulo $p$, for $w  \in W$.
\end{thm}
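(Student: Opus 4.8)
The plan is to realise every central character as the pull-back of an evaluation character along a Harish--Chandra-type homomorphism, and then to turn equality of such characters into a statement about $W$-orbits via a Chevalley-type description of invariants.

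\textbf{Step 1 (central characters factor through the Harish--Chandra map).} The triangular decomposition $U(\mathfrak{g}) = U(\mathfrak{h}) \oplus \big(\mathfrak{n}^{-}U(\mathfrak{g}) + U(\mathfrak{g})\mathfrak{n}^{+}\big)$ and the projection onto the first summand make sense over any field, so I would first define $\xi \colon Z(U(\mathfrak{g})) \to S(\mathfrak{h}) = U(\mathfrak{h})$ exactly as in characteristic zero. A weight count shows that for central $z$ the element $z - \xi(z)$ annihilates the highest weight vector $v_\lambda$ of the Verma module $M(\lambda)$, so $z$ acts on $v_\lambda$ — hence on all of $M(\lambda)$ and on $V(\lambda,p)$ — by the scalar $\lambda(\xi(z))$, where $\lambda$ also denotes the algebra map $S(\mathfrak{h}) \to k$ induced by $\lambda \colon \mathfrak{h} \to k$. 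Thus $\chi_\lambda = \lambda \circ \xi$, and $\chi_\lambda = \chi_\mu$ if and only if $\lambda$ and $\mu$ agree as characters of the subalgebra $\xi\big(Z(U(\mathfrak{g}))\big) \subseteq S(\mathfrak{h})$.

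\textbf{Step 2 (identifying $\xi(Z)$).} Here I would use the characteristic-$p$ description of the centre of $U(\mathfrak{g})$ — which is the real input of the cited theorems of Humphreys and Kac, and where $p \neq 2$ enters: $\xi$ still lands in the subalgebra of $S(\mathfrak{h})$ invariant under the dot action of $W$, and this together with the image of the $p$-centre $Z_p = k[x^{p} - x^{[p]} : x \in \mathfrak{g}]$ — whose image under $\xi$ is $k[h^{p} - h^{[p]} : h \in \mathfrak{h}]$, root vectors contributing nothing — suffices to recover any central character. For the integral weights that occur here the $p$-centre contribution is automatic, because $h^{p} - h^{[p]}$ evaluates to $\lambda(h)^{p} - \lambda(h) = 0$ on an integral $\lambda$ after reduction mod $p$; so equality of $\chi_\lambda$ and $\chi_\mu$ reduces to equality on the dot-invariants $S(\mathfrak{h})^{W}$, and after the $\rho$-shift — an algebra automorphism of $S(\mathfrak{h})$ carrying dot-invariants onto ordinary invariants — to the statement that $\lambda + \rho$ and $\mu + \rho$ give the same character of the ordinary $S(\mathfrak{h})^{W}$.

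\textbf{Step 3 (invariants separate orbits).} Finally I would invoke the Chevalley-type fact that over $k = \overline{\mathbb F}_{p}$, for $p$ good (again consistent with $p\neq 2$; in type $A$ it is simply that two $n$-tuples have the same elementary symmetric functions iff one is a permutation of the other, which holds over any field), the $k$-points of $\mathrm{Spec}\, S(\mathfrak{h})^{W}$ are exactly the $W$-orbits on $\mathfrak{h}^{*}$. Reducing $\lambda + \rho$ and $\mu + \rho$ mod $p$, they give the same character of $S(\mathfrak{h})^{W}$ iff they lie in a common $W$-orbit, i.e. iff $w(\lambda+\rho) \equiv \mu + \rho \pmod{p}$ for some $w \in W$, which is the assertion. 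Steps 1 and 3 are formal; the real obstacle is Step 2 — controlling the centre of $U(\mathfrak{g})$ in characteristic $p$ and separating its Harish--Chandra part from its $p$-part — which is exactly the content we borrow from \cite{humphreys1971modular} and \cite{KAC1976136}.
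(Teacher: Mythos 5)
The paper does not prove this statement; it is cited verbatim from Humphreys (Thm.~1 of \cite{humphreys1971modular}) and Kac (Thm.~2 of \cite{KAC1976136}), so there is no internal proof against which to compare yours line-by-line. Your sketch is a sound reconstruction of the standard Harish--Chandra route, and you correctly locate the nontrivial input: the easy direction ($\chi_\lambda=\chi_\mu$ whenever $\lambda$ and $\mu$ are dot-$W$-linked mod $p$) follows once one knows $\xi(Z)\subseteq S(\mathfrak h)^{W\cdot}$ and that the $p$-centre vanishes on weights that are $\mathbb F_p$-valued on the Chevalley lattice, while the hard direction (linkage from equality of central characters) requires controlling how much of $S(\mathfrak h)^{W\cdot}$ is actually hit. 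Two small cautions. First, in Step 2 you assert that $\xi$ of the $p$-centre is $k[h^p-h^{[p]}:h\in\mathfrak h]$ with root-vector contributions vanishing; that is correct, but it uses that $\xi$ restricted to $Z(U(\mathfrak g))$ (hence to $Z_p\subseteq Z$) is an algebra homomorphism, which is worth stating since $\xi$ is not multiplicative on all of $U(\mathfrak g)$. Second, and more substantively, the phrase ``this \ldots suffices to recover any central character'' is doing all the work and is essentially equivalent to the theorem itself; you are right that it is precisely what the cited references establish, but note that Humphreys's 1971 argument predates and does not rely on the Kac--Weisfeiler description of $Z(U(\mathfrak g))$ --- he builds explicit central elements and analyzes the fibers of the resulting map on weights directly. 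So your proof is a reasonable \emph{a posteriori} understanding of the statement via the modern picture, but the actual content still rests on the same references the paper cites. For the purposes of this paper (type $A$, with $p$ eventually taken large in Lemma~3.1.2), your Step 3 reduction to separating $S_n$-orbits by elementary symmetric polynomials over $\overline{\mathbb F}_p$ is unproblematic.
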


So we have the following lemma.

\begin{lemma}
  For $p > \lambda_1 -\lambda_n + n$ the module $V(\lambda, p)$ is irreducible.
\end{lemma}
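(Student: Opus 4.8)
The plan is to argue by contradiction, combining the linkage statement of Theorem 3.1.1 with the numerical slack built into the hypothesis $p > \lambda_1 - \lambda_n + n$. We may assume $n \ge 2$ and $\lambda$ nontrivial, so that $p > \lambda_1 - \lambda_n + n \ge 2$, hence $p \ge 3$ (in particular $p \ne 2$, so Theorem 3.1.1 applies) and $p > \lambda_1 - \lambda_n + 1$, so by the discussion preceding the lemma $V(\lambda,p)$ is a finite-dimensional highest weight module whose weight set is the set $Q$ of weights of $V(\lambda)$ over $\mathbb C$, with $\lambda$ as unique maximal weight and $\dim V(\lambda,p)_\lambda = 1$. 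Suppose $V(\lambda,p)$ were reducible. Since it is finite-dimensional, any nonzero proper submodule contains a nonzero vector whose weight $\mu$ is maximal among the weights of that submodule, and such a vector is killed by every $e_i$, i.e. is a highest weight vector. As $v_\lambda$ generates $V(\lambda,p)$ and $\dim V(\lambda,p)_\lambda = 1$, we must have $\mu \ne \lambda$; moreover $\mu \in Q$ and $\mu < \lambda$.

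Next I would compare Harish-Chandra central characters. The submodule generated by that highest weight vector is a quotient of the Verma module $M(\mu,p)$, so the Harish-Chandra center — generated by the reductions mod $p$ of the usual central elements, which act on $M(\nu,p)$ by $\chi_\nu$ exactly as in characteristic zero — acts on it through $\chi_\mu$; on the other hand it lies inside $V(\lambda,p)$, a quotient of $M(\lambda,p)$, on which that center acts through $\chi_\lambda$. Hence $\chi_\mu = \chi_\lambda$, and Theorem 3.1.1 gives $w(\lambda+\rho) \equiv \mu+\rho \pmod p$ for some $w \in W$. Now the size estimate: since $\mu \in Q$, every coordinate of $\mu$ lies in $[\lambda_n,\lambda_1]$ (the weights of $V(\lambda)$ lie in the convex hull of $W\lambda$), while the coordinates of $\rho = (n-1,\dots,1,0)$ lie in $[0,n-1]$; so each coordinate of $\mu+\rho$, and likewise of $w(\lambda+\rho)$ (a permutation of the coordinates of $\lambda+\rho$), lies in $[\lambda_n,\lambda_1+n-1]$. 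Hence each coordinate of $w(\lambda+\rho)-(\mu+\rho)$ has absolute value at most $\lambda_1-\lambda_n+n-1 < p$, and being divisible by $p$ it vanishes. So $\mu+\rho = w(\lambda+\rho)$, i.e. $\mu = w\cdot\lambda$ under the dot action, and $w \ne e$ since $\mu \ne \lambda$.

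It remains to derive a contradiction from the fact that $w\cdot\lambda$ is not a weight of $V(\lambda)$ over $\mathbb C$ when $w \ne e$. Since $Q$ is $W$-stable, $\mu = w\cdot\lambda \in Q$ would force the dominant representative $(w\cdot\lambda)^+$ of its $W$-orbit to be a dominant weight of $V(\lambda)$, hence $(w\cdot\lambda)^+ \le \lambda$; in type $A$ this means $S_k((w\cdot\lambda)^+) \le S_k(\lambda)$ for all $k$, where $S_k(-)$ is the sum of the $k$ largest coordinates. But writing $\nu = \lambda+\rho$ (so $\nu_1 > \cdots > \nu_n$ and $\lambda_i = \nu_i - (n-i)$) and estimating $S_k((w\cdot\lambda)^+)$ from below by the value of $\sum_i (w\cdot\lambda)_i$ on the index set $I = w(\{1,\dots,k\})$, one obtains
\[ S_k((w\cdot\lambda)^+) \ge (\nu_1 + \cdots + \nu_k) - \sum_{i \in I}(n-i) \ge (\nu_1 + \cdots + \nu_k) - \sum_{i=1}^k (n-i) = S_k(\lambda) \]
for every $k$, with strict inequality for the least $k$ at which $w(\{1,\dots,k\}) \ne \{1,\dots,k\}$ (such $k$ exists and is $< n$ because $w \ne e$, and $\sum_{i=1}^k (n-i)$ is the strict maximum of $\sum_{i \in I}(n-i)$ over $k$-subsets). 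Hence $(w\cdot\lambda)^+ \not\le \lambda$, a contradiction, so $V(\lambda,p)$ is irreducible.

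I expect the two genuinely delicate points to be: (i) justifying the central-character comparison in characteristic $p$ — i.e. that the center appearing in Theorem 3.1.1 is the Harish-Chandra part, which does act by scalars on the (infinite-dimensional) Verma modules $M(\nu,p)$ by the same formula as in characteristic zero and is preserved under passing to sub- and quotient modules; and (ii) the elementary convexity computation ruling out $w\cdot\lambda \in Q$ for $w \ne e$. Everything else is a direct application of Theorem 3.1.1 together with the inequality packaged into the hypothesis $p > \lambda_1 - \lambda_n + n$.
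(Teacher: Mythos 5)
Your proof is correct and follows the paper's route exactly: locate a singular vector of weight $\mu < \lambda$ in a putative proper submodule, invoke the mod-$p$ linkage of Theorem 3.1.1 to get $\mu+\rho \equiv w(\lambda+\rho) \pmod p$ for some $w$, use the numerical hypothesis $p > \lambda_1-\lambda_n+n$ to lift this congruence to an equality over $\mathbb Z$, and conclude that $\mu = w\cdot\lambda$ with $w\ne e$ cannot be a weight of $V(\lambda)$. The paper dispatches this last step with the terse ``But this contradicts $\mu\in Q$''; your partial-sum inequality $S_k\bigl((w\cdot\lambda)^+\bigr) \ge S_k(\lambda)$ with strict inequality at the first $k$ where $w(\{1,\dots,k\})\ne\{1,\dots,k\}$ is a correct and explicit way to supply the convexity computation the paper leaves implicit.
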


\begin{proof}
Suppose we have a submodule $N\subset V(\lambda,p)$. Since already in $V(\lambda,p)$ the order $<$ on weights has no cycles for such $p$, it follows that it is a well-defined order when restricted to $N$, hence $N$ has a highest weight vector. Thus $V(\lambda,p)$ has a non-trivial singular vector with weight $\mu$. But by the above $\mu$ should be equal to $w(\lambda + \rho) - \rho$ for some $w \in W$. So we only need to prove that there is no element of $Q$ which differs from $w(\lambda + \rho) - \rho$ for some $w$ by a linear combination of simple roots multiplied by $p$. So suppose $ \lambda \ge \mu \ge w_0(\lambda)$ and $\mu - w(\lambda + \rho) - \rho$ is divisible by $p$. 

Write down $\lambda = (\lambda_1, \dots, \lambda_n)$ and $w_0(\lambda) = (\lambda_n, \dots, \lambda_1)$, where  $\lambda_1 \ge \lambda_2 \ge \dots \ge \lambda_n$.  Now $w(\lambda + \rho) - \rho = (\lambda_{w(1)} + e_1, \dots, \lambda_{w(n)} + e_n)$, where $e_j$ are some integers $n > e_j > -n$. Now, since $\mu \le \lambda$, it follows that $\mu_1 \le \lambda_1$ and $\mu_n \ge \lambda_n$. Since $Q$ is $W$-stable, it follows that $\lambda_1 \ge \mu_i \ge \lambda_n$. Now $\mu_i = \lambda_{w(i)} + e_i \ mod \ p$, in other words a number between $\lambda_1$ and $\lambda_n$ differs from a number between $\lambda_1 + n-1$ and $\lambda_n -n+1$ by a multiple of $p$, but  $\lambda_1 - \lambda_n +n < p$ and it follows that $\mu_i = \lambda_{w(i)} + e_i$. But this contradicts $\mu \in Q$. 

So we proved that there are no non-trivial singular vectors in $V(\lambda,p)$ and hence, no nontrivial submodules.
\end{proof}

The same statement of course holds for $\mathfrak{gl}_n$-modules.

\subsection{Representations of $Y(\mathfrak{gl}_n)$ and $Y(\mathfrak{sl}_n)$ in positive characteristic.}

Using the above tools we can try and repeat some of the arguments from the classification of irreducible representations of the Yangian  following \cite{molev2007yangians}. First we need to understand in which sense we can treat a Yangian representation in positive characteristic as a highest weight representation.

\begin{prop}
Consider an irreducible  representation L of $Y(\mathfrak{gl}_n)$,  which as a representation of $\mathfrak{gl_n}$ is equal to the sum $V(\lambda_1,p) \oplus \dots \oplus V(\lambda_k,p)$. Then for \\ $p > \max((\lambda_j)_1-(\lambda_j)_2)+n$, $L$ has a unique up to scaling singular vector, whose $\mathfrak{gl_n}$ weight is maximal.
\end{prop}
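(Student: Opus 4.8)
The plan is to mimic the characteristic-zero argument (essentially the proof of Prop. 3.3.4–3.3.5 in Molev's book) while using Lemma 3.2.3 to control when the constituent $\mathfrak{gl}_n$-modules $V(\lambda_j,p)$ remain irreducible with a well-defined weight order. The key point is that the Yangian carries a "Cartan-type" subalgebra: the coefficients of the diagonal series $t_{ii}(u)$, together with the raising operators $t_{ij}(u)$ for $i<j$, which already act on the $\mathfrak{gl}_n$-weight spaces in a way that lets us run a highest-weight argument. I would proceed as follows.

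First I would set up the weight grading. Since $L = V(\lambda_1,p)\oplus\cdots\oplus V(\lambda_k,p)$ as a $\mathfrak{gl}_n$-module, and since $p$ is large enough (bigger than $(\lambda_j)_1-(\lambda_j)_2 + n \ge (\lambda_j)_1 - (\lambda_j)_n + n$ once we note for $\mathfrak{gl}_n$ we only need the top two entries to differ, or more carefully bigger than the relevant bound from Lemma 3.2.3 applied to each $\lambda_j$), each $V(\lambda_j,p)$ is irreducible with highest weight $\lambda_j$ and the partial order $<$ on its weights has no cycles. I would take $\mu^{\max}$ to be a $\mathfrak{gl}_n$-weight of $L$ that is maximal with respect to $<$ among all weights occurring in all the $V(\lambda_j,p)$ — one has to check such a maximal weight exists and that it equals some $\lambda_j$ (the one, or ones, that are $<$-maximal among $\lambda_1,\dots,\lambda_k$); here the no-cycle property of $<$ on the union of the weight sets is what makes "maximal" meaningful, and I'd want to observe that distinct $\lambda_j$'s that are $<$-maximal give weight spaces that are each one-dimensional (being the highest weight space of the corresponding Weyl module).

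Next I would show the weight space $L_{\mu^{\max}}$ is killed by all $t_{ij}^{(r)}$ with $i<j$ (i.e. is "singular" in the Yangian sense): applying such an operator raises the $\mathfrak{gl}_n$-weight by a positive root (because $t_{ij}^{(1)} = E_{ij}$ and, more generally, the FRT relations force $t_{ij}^{(r)}$ to shift weight by $\varepsilon_i-\varepsilon_j$), landing in a weight space that is zero by maximality of $\mu^{\max}$. Then for existence of the singular vector: take any nonzero $v$ in $L_{\mu^{\max}}$; I'd need this space to be at most one-dimensional. If $\mu^{\max}$ is realized by a unique $\lambda_j$ this is immediate; if several $\lambda_j$ coincide with the common maximal weight, I'd argue that the Yangian submodule generated by the full weight space would then be reducible unless the space is one-dimensional — or, more cleanly, I'd use irreducibility of $L$ together with the fact that the span of the $\mathfrak{gl}_n$-subrepresentations generated from $L_{\mu^{\max}}$ by lowering operators must be all of $L$, forcing the multiplicities to match up and the top space to be a single line (this is exactly the place where the large-$p$ hypothesis on the \emph{gaps} $(\lambda_j)_1 - (\lambda_j)_2$ gets used, to rule out accidental weight collisions coming from $p\mathbb{Z}$-shifts, paralleling Lemma 3.2.3). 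Finally, uniqueness up to scaling of \emph{any} Yangian-singular vector: I'd show a singular vector must be a $\mathfrak{gl}_n$-highest-weight vector of weight $<$-maximal in $L$ — because the diagonal operators $t_{ii}(u)$ preserve weight spaces and the raising operators kill it, so the Yangian submodule it generates is spanned by lowering operators applied to it, and by irreducibility of $L$ equals $L$; then a dimension/weight count forces its weight to be $\mu^{\max}$ and, since $L_{\mu^{\max}}$ is one-dimensional, forces uniqueness.

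The main obstacle I anticipate is the step establishing that $L_{\mu^{\max}}$ is one-dimensional and, relatedly, that a maximal $\mathfrak{gl}_n$-weight is genuinely unique up to the one-dimensional-space ambiguity when several summands $V(\lambda_j,p)$ could a priori share a maximal weight — and ensuring that the positive-characteristic reduction does not create spurious identifications of weights. This is precisely why the hypothesis is phrased in terms of $(\lambda_j)_1 - (\lambda_j)_2$ plus $n$ rather than something weaker: it guarantees (via the argument of Lemma 3.2.3 applied to each summand and to differences of weights across summands) that the weight combinatorics in characteristic $p$ faithfully mirror characteristic $0$, so that Molev's characteristic-zero highest-weight theory goes through verbatim. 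Everything else — that raising operators shift weights by positive roots, that the Yangian is generated by raising, lowering, and Cartan parts compatibly with the $\mathfrak{gl}_n$-grading — is a direct consequence of the FRT presentation and of Prop. 2.1.2(c), and should be routine.
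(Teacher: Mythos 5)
Your proposal follows essentially the same route as the paper's own proof, which is deliberately brief: it observes that once the underlying $\mathfrak{gl}_n$-module is a genuine direct sum of highest-weight modules with an acyclic weight order (Lemma 3.1.2 and the discussion preceding it), the only concern -- the possible absence of any singular vector -- disappears, and Molev's Theorem 3.2.7 may be rerun verbatim. Your longer write-up fleshes out exactly that skeleton (maximal weight, raising operators kill the top weight space, one-dimensionality of that space via the PBW-type structure, uniqueness), so no genuine difference in approach.

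One slip worth flagging: you justify applicability of Lemma 3.1.2 by asserting $(\lambda_j)_1-(\lambda_j)_2 + n \ge (\lambda_j)_1 - (\lambda_j)_n + n$, but since $(\lambda_j)_2 \ge (\lambda_j)_n$ this inequality points the wrong way, so the hypothesis on $p$ as literally stated in the proposition does not by itself imply the $\lambda_1-\lambda_n$ bound of Lemma 3.1.2. This is almost certainly a typo in the proposition's statement rather than an error of yours (compare the use in Prop.~4.2.6, where the relevant quantity is $(\mu_j|_{t_n})_1 - (\mu_j|_{t_n})_{t_n}$), and your hedge about ``the relevant bound from Lemma 3.2.3'' (you mean Lemma 3.1.2) shows the right instinct; but the inequality as you wrote it is simply false, so you should instead say outright that you are reading the hypothesis as $p > \max_j\big((\lambda_j)_1-(\lambda_j)_n\big)+n$.
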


\begin{proof}
The problem here is to rule out the possibility of $L$ not having any singular vectors at all.
But since we can speak about the underlying $\mathfrak{gl}_n$ representation as highest weight, we can repeat the argument of Thm. 3.2.7 in \cite{molev2007yangians} without any changes.
\end{proof}

So it follows that for big enough $p$, $L = L(\lambda(u),p)$ for some $\lambda(u)$ (where $L(\lambda(u),p)$ is the irreducible quotient of the Verma module $M(\lambda(u),p)$).

Now let's move to the important case $Y(\mathfrak{gl}_2)$.

\begin{prop}
Suppose $L$ is a finite-dimensional representation of $Y(\mathfrak{gl}_2)$ isomorphic to $V(\lambda_1,p) \oplus \dots \oplus V(\lambda_k,p)$ as a $\mathfrak{gl}_2$ representation, then for $p > \max((\lambda_j)_1 - (\lambda_j)_2)+1$ we have $L = L(\lambda(u),p)$ and there is a formal series $f(u) \in 1 + u^{-1}\overline{\mathbb F}_p[[u^{-1}]]$ such that $f(u)\lambda_1(u)$ and $f(u)\lambda_2(u)$ are polynomials in $u^{-1}$.
\end{prop}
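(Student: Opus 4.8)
The plan is to reduce the statement, via the previous proposition, to the corresponding assertion about the highest weight $(\lambda_1(u),\lambda_2(u))$, and then to transcribe Molev's analysis of the $Y(\mathfrak{gl}_2)$ case from \cite{molev2007yangians} into characteristic $p$, keeping track of the few places where positive characteristic could interfere. First, by the previous proposition (for $\mathfrak{gl}_2$ its conclusion already holds once $p > \max_j((\lambda_j)_1-(\lambda_j)_2)+1$, since in that range every $V(\lambda_j,p)$ is a simple $\mathfrak{sl}_2$-module and the weight order on $L$ is acyclic) the module $L$ has a unique singular vector $\xi$ up to scaling, so $L = L(\lambda(u),p)$ with $\lambda_i(u) \in 1 + u^{-1}\overline{\mathbb F}_p[[u^{-1}]]$ determined by $t_{11}(u)\xi = \lambda_1(u)\xi$, $t_{22}(u)\xi = \lambda_2(u)\xi$ and $t_{12}(u)\xi = 0$. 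It then suffices to produce a monic polynomial $P(u) \in \overline{\mathbb F}_p[u]$ with $\lambda_1(u)/\lambda_2(u) = P(u+1)/P(u)$: setting $f(u) = u^{-\deg P}P(u)\,\lambda_2(u)^{-1}$ gives an element of $1+u^{-1}\overline{\mathbb F}_p[[u^{-1}]]$ (a product of two such series) with $f(u)\lambda_2(u) = u^{-\deg P}P(u)$ and $f(u)\lambda_1(u) = u^{-\deg P}P(u+1)$, both polynomials in $u^{-1}$.

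To construct $P$ I would follow the $Y(\mathfrak{gl}_2)$ computation in \cite{molev2007yangians} line by line. Its skeleton: the series $t_{21}(u)$ pairwise commute, and the RTT relations --- which have integer coefficients, and so hold unchanged over $\overline{\mathbb F}_p$ --- give, applied to $\xi$, identities such as
\[ t_{12}(u)\,t_{21}(v)\,\xi \;=\; \frac{\lambda_1(v)\lambda_2(u)-\lambda_1(u)\lambda_2(v)}{u-v}\,\xi, \]
and more generally express $t_{12}(u)$ acting on a ``chain'' $t_{21}(v_1)\cdots t_{21}(v_k)\xi$ as a linear combination --- with coefficients rational in $u, v_1,\dots,v_k$ and the $\lambda_j$, and denominators built from the factors $u-v_i$ --- of chains of length $k-1$, while $t_{11}(u)$ and $t_{22}(u)$ carry chains of length $k$ to combinations of chains of length $k$. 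Since $L$ is finite-dimensional there is a largest $N$ for which a generic chain of length $N$ is nonzero; fixing such a nonzero chain $\zeta$ and combining $t_{21}(v)\zeta = 0$ (all $v$) with the relations above, Molev's argument produces a monic $P$ of degree $N$ with $\lambda_1(u)/\lambda_2(u) = P(u+1)/P(u)$. (The degree $N$ is moreover controlled by the $\mathfrak{gl}_2$-weights of the $V(\lambda_j,p)$, which will be relevant for the ultraproduct argument later but is not needed here.)

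The only checks specific to positive characteristic are the following, and this bookkeeping is where I expect the actual work to lie. First, one must see that no coefficient that gets inverted in the computation is divisible by $p$: all denominators that occur are products of factors $u-v_i$ and $u-v_i\pm 1$, none of which vanishes identically, and the parameters $v_i$ are kept formal throughout, so no genuine division by $p$ ever happens. Second, and more substantively, the bound on the chain length $N$ and the nonvanishing statements used to locate the roots of $P$ depend on the structure of $L$ as a $\mathfrak{gl}_2$-module through the $t^{(1)}_{ij}$-action: here one needs each $V(\lambda_j,p)$ to be the simple Weyl module with exactly the characteristic-zero weight multiplicities and $E_{21}$-string structure, which is precisely what $p > \max_j((\lambda_j)_1-(\lambda_j)_2)+1$ guarantees via the Weyl-module irreducibility of Section 3.1 for $\mathfrak{sl}_2$. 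Granting these two points, every step of the $Y(\mathfrak{gl}_2)$ argument of \cite{molev2007yangians} carries over unchanged and the proposition follows.
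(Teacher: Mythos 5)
Your overall plan matches the paper's: reduce to the highest-weight case via Prop.\ 3.2.1 and then transcribe Molev's $Y(\mathfrak{gl}_2)$ analysis, checking that positive characteristic does not interfere. Indeed the paper's proof of Prop.\ 3.2.2 is just the one-line remark that Molev's proof of Proposition 3.3.1 in \cite{molev2007yangians} carries over verbatim once the module can be treated as a highest-weight module.

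However, the middle of your argument mis-targets what that proof actually produces. You say the chain analysis ``produces a monic $P$ of degree $N$ with $\lambda_1(u)/\lambda_2(u) = P(u+1)/P(u)$'' and then recover the polynomiality of $f\lambda_1$, $f\lambda_2$ from that. But the relation $\lambda_1/\lambda_2 = P(u+1)/P(u)$ is \emph{not} the conclusion of Molev's Prop.\ 3.3.1 --- it is Thm.\ 3.3.3, which in this paper is the later Thm.\ 3.2.4 and is derived \emph{from} Prop.\ 3.2.2 together with Prop.\ 3.2.3 (irreducibility of tensor products $L(\alpha_1,\beta_1,p)\otimes\cdots$). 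In particular, the string structure ($\alpha_i-\beta_i \in \mathbb F_p$ after renumbering) needed to assemble $P(u) = \prod_i (u+\beta_i)(u+\beta_i+1)\cdots(u+\alpha_i-1)$ is only available after the tensor-product step; the chain computation applied to the highest weight vector alone does not yield it. As written, your route runs through a later theorem that in the paper's logic depends on the proposition you are trying to prove, and so is circular. The fix is straightforward: aim directly for the weaker conclusion that Molev's Prop.\ 3.3.1 argument actually gives, namely that $\lambda_1(u)$ and $\lambda_2(u)$ are rational with a common denominator, equivalently that some twist $f(u)\in 1+u^{-1}\overline{\mathbb F}_p[[u^{-1}]]$ makes both $f\lambda_1$ and $f\lambda_2$ polynomial in $u^{-1}$. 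That is exactly the statement of Prop.\ 3.2.2, and the positive-characteristic checks you list (integrality of the RTT coefficients, Weyl-module simplicity from $p > \max_j((\lambda_j)_1-(\lambda_j)_2)+1$) are the right ones.
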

\begin{proof}
By the previous discussion under our assumption  we can repeat the proof of Proposition 3.3.1 of \cite{molev2007yangians} for $L$ without any change.
\end{proof}

For $n \in \mathbb F_p$ denote by $[n] \in \mathbb Z$  the minimal element of $\mathbb Z_{\ge 0}$ such that $[n] \ mod \ p = n$.

Now for $\alpha,\beta \in \overline{\mathbb F}_p$ denote by $L(\alpha,\beta,p)$ the irreducible  quotient of the Verma module $M(\alpha,\beta,p)$ for $\mathfrak{gl}_2$. By a direct calculation $L(\alpha,\beta,p)$ has a basis $f^kv$ for $k$ from $0$ to $l$, where $l$ is equal to $p-1$ if $\alpha-\beta \notin \mathbb F_p$ and equal to $[\alpha-\beta]$ if $\alpha-\beta \in \mathbb F_p$.

\begin{prop}
 Given two sequences $\alpha_i,\beta_i$ of elements of $\overline{\mathbb F}_p$ for $i =1,\dots,k$, re-numerate them in such a way that $[\alpha_i-\beta_i]$ is minimal among all $[\alpha_j-\beta_k]$ for $i \le j,k$ if defined, and if not defined then all $\alpha_j-\alpha_k \notin \mathbb F_p$. Then the representation
 $$
 L(\alpha_1,\beta_1,p) \otimes \dots \otimes L(\alpha_k,\beta_k,p)
 $$
  is irreducible.
\end{prop}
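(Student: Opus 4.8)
Set $L := L(\alpha_1,\beta_1,p)\otimes\cdots\otimes L(\alpha_k,\beta_k,p)$ (each factor viewed as a $Y(\mathfrak{gl}_2)$-module via $ev$), let $\xi_i^{+}$ and $\xi_i^{-}$ be the highest and lowest weight vectors of the $i$-th factor, and put $\xi^{+} := \xi_1^{+}\otimes\cdots\otimes\xi_k^{+}$, $\xi^{-} := \xi_1^{-}\otimes\cdots\otimes\xi_k^{-}$. The plan is to imitate the characteristic-zero proof of the irreducibility of an ordered tensor product of evaluation modules (\cite{molev2007yangians}, \S 3.3), keeping careful track of which integers must be invertible modulo $p$. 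Throughout I assume $p$ is large in the sense of the preceding propositions, so that $L$ is a direct sum of Weyl modules $V(\lambda_j,p)$, the $\mathfrak{gl}_2$-weights occurring in $L$ are linearly ordered with no cycles, and every submodule of $L$ is weight-graded and has both a highest and a lowest weight vector. From $\Delta(t_{ij}(u))=\sum_a t_{ia}(u)\otimes t_{aj}(u)$ one checks, just as over $\mathbb{C}$, that $\xi^{+}$ is a singular vector with $Y(\mathfrak{gl}_2)$-weight $\lambda(u)$ equal to the product of the highest weights of the factors, and that $\xi^{-}$ spans the one-dimensional lowest $\mathfrak{gl}_2$-weight space of $L$.

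The two key steps are to show that $\xi^{+}$ generates $L$ and that $\xi^{-}$ generates $L$. For the first I would induct on $k$: the re-numeration hypothesis puts the ``shortest string'' first, which is exactly what is needed to reduce the inductive step to a two-factor product $L(\alpha_1,\beta_1,p)\otimes M$ with $M$ a highest weight $Y(\mathfrak{gl}_2)$-module whose highest weight lies in the correct position relative to $(\alpha_1,\beta_1)$. In the two-factor case I would run the computation of \cite{molev2007yangians}: using the explicit basis $\{f^{j}v\}$ of $L(\alpha,\beta,p)$ together with the coproduct, one shows by descending induction on the $\mathfrak{gl}_2$-weight that every basis tensor $f^{j_1}v\otimes f^{j_2}v'$ lies in $Y(\mathfrak{gl}_2)\cdot\xi^{+}$, the transition scalars being products of integers of the form $\alpha_1-\beta_2-j$ and of binomial coefficients $\binom{[\alpha_i-\beta_i]}{j}$ with $0\le j\le[\alpha_i-\beta_i]$. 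The cyclicity of $\xi^{-}$ follows by the parallel computation with the roles of raising and lowering operators interchanged. Granting these two facts, $L$ is a highest weight module, hence surjects onto its unique irreducible quotient $\bar L\cong L(\lambda(u),p)$; and the two cyclicities, combined with the uniqueness of the singular vector in an irreducible $Y(\mathfrak{gl}_2)$-module established in the propositions above, force the kernel of $L\twoheadrightarrow\bar L$ to vanish, exactly as in \cite{molev2007yangians}, proving that $L=\bar L$ is irreducible.

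I expect the main obstacle to be the non-vanishing of the transition scalars in the two-factor cyclicity computation. Over $\mathbb{C}$ their non-vanishing is precisely the Chari--Pressley ``string'' condition; over $\overline{\mathbb{F}}_p$ it must be checked modulo $p$, and this is exactly why the re-numeration is phrased in terms of the reductions $[\alpha_i-\beta_i]$ and why $p$ is taken large. The minimality of $[\alpha_1-\beta_1]$ among the mixed reductions $[\alpha_j-\beta_k]$ (together with the alternative clause $\alpha_j-\beta_k\notin\mathbb{F}_p$) keeps each factor $\alpha_1-\beta_2-j$ nonzero modulo $p$, while $p>[\alpha_i-\beta_i]$ keeps the binomial coefficients $\binom{[\alpha_i-\beta_i]}{j}$ invertible; one also has to observe that the weight combinatorics used in \S 3.1--\S 3.2 (cycle-freeness of the weight order, singular vectors only at weights $w(\lambda+\rho)-\rho$) survives modulo $p$ for our choice of $p$.
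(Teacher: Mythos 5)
Your strategy is genuinely different from the paper's, and it has a couple of real gaps.

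The paper follows Molev's Prop.~3.3.2 essentially verbatim: local nilpotence of $t_{12}(u)$ forces any nonzero submodule of $L$ to contain a singular vector; one then writes an arbitrary singular vector as $\sum_{r=0}^q e^r\zeta_1\otimes\xi_r$ and derives the scalar relation $q(\alpha_1-\beta_1-q+1)(\alpha_1-\beta_2-q+1)\cdots(\alpha_1-\beta_k-q+1)=0$, and the re\mbox{-}numeration hypothesis is used precisely to show that, in $\overline{\mathbb F}_p$, this forces $q=0$. Thus the \emph{only} singular vector is $\zeta_1\otimes\cdots\otimes\zeta_k$, and (together with cyclicity of that vector, cited to the characteristic-zero argument) irreducibility follows. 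You instead propose a Chari--Pressley-style double-cyclicity argument: show $\xi^+$ and $\xi^-$ are both cyclic, then deduce irreducibility via a twisted duality. That route can be made to work, but it is not what the paper does and it is not made precise in your write-up: the crucial step ``the two cyclicities ... force the kernel of $L\twoheadrightarrow\bar L$ to vanish'' needs an actual argument, and the input it needs is one-dimensionality of the top weight space of $L$ itself together with a careful identification of the highest weight of the twisted dual, not ``uniqueness of the singular vector in an irreducible module'' (which presupposes the conclusion).

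More importantly, your opening assumption is wrong for part of the statement. Prop.~3.2.3 carries \emph{no} lower bound on $p$ and explicitly allows $\alpha_i-\beta_i\notin\mathbb F_p$, in which case $L(\alpha_i,\beta_i,p)$ is the $p$-dimensional baby Verma with basis $\{f^jv:0\le j\le p-1\}$ and is not a Weyl module $V(\lambda_j,p)$ in the sense of \S3.1. Your blanket assumption that ``$L$ is a direct sum of Weyl modules $V(\lambda_j,p)$'' and that ``$p$ is large in the sense of the preceding propositions'' therefore excludes exactly the case the proposition's phrase ``if not defined then all $\alpha_j-\beta_k\notin\mathbb F_p$'' is there to cover, and which the paper's proof handles by a separate clause (``If $\alpha_1-\beta_1\notin\mathbb F_p$ \ldots''). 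Your observation that the binomial coefficients $\binom{[\alpha_i-\beta_i]}{j}$ are invertible is automatic (since $[\cdot]$ lands in $\{0,\dots,p-1\}$ by definition) and is not where the real work lies; the work is in analyzing the one product above, for both the $\alpha_1-\beta_1\in\mathbb F_p$ case and the $\alpha_1-\beta_1\notin\mathbb F_p$ case, under the re-numeration hypothesis. I'd recommend restructuring to follow the singular-vector route: it is shorter, it avoids the duality machinery, and it handles the baby-Verma factors uniformly.
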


\begin{proof}
We can use the proof of Prop. 3.3.2 in \cite{molev2007yangians} with slight changes (in particular, the number $p$ used in the proof we will denote by $q$).

First let's explain how the re-numeration of $\alpha_i$, $\beta_i$ works. Consider all possible pairs $i,j$ such that $[\alpha_i-\beta_j]$ is defined, and choose a pair such that $[\alpha_i-\beta_j]$ is minimal among them. These two elements now will become new $\alpha_1$ and $\beta_1$. Then repeat for all remaining $\alpha_i$ and $\beta_j$.

Denote the module in question by $L$. Suppose we have a submodule $N \subset L$. By definition $t_{12}(u)$ acts on $L(\alpha_i,\beta_i,p)$ as $eu^{-1}$, so it acts locally nilpotently. Since $\Delta(t_{12}(u)) = t_{11}(u)\otimes t_{12}(u) + t_{12}(u) \otimes t_{22}(u)$, it follows that $t_{12}(u)$ acts locally nilpotently on $L$ and $N$. Hence $N$ has a vector singular with respect to $t_{12}(u)$. Thus if we prove that $L$ has only one singular vector (the tensor product of singular vectors of all $L(\alpha_i,\beta_i,p)$) it will follow that this module has no  nontrivial submodules.

We will prove this claim by induction. So suppose we have $\xi = \sum_{r=0}^q e^r\zeta_1 \otimes \xi_r$. Here $\zeta_1$ is a singular vector of $L(\alpha_1,\beta_1,p)$ and $q$ is an integer less or equal to $p-1$ or $[\alpha_i-\beta_i]$ if it is defined. Then repeating all the steps of the proof of Prop. 3.3.2 in \cite{molev2007yangians} we obtain a formula:
$$
q(\alpha_1-\beta_1-q+1)(\alpha_1-\beta_2-q+1)\dots(\alpha-\beta_k-q+1) = 0 \ . 
$$
If $\alpha_1-\beta_1 \notin \mathbb F_p$, then all $\alpha_1-\beta_k \notin \mathbb F_p$, hence the equation is satisfied only for $q=0$. If $[\alpha_1-\beta_1] = k \le p-1$, then $q \le k$. Hence $[\alpha_1-\beta_1]-[q]+1 = k-[q]+1$ lies between $0$ and $k+1$. So it may be equal to zero only if $k=p-1$ and $q=0$. All other $\alpha_1-\beta_j$ are either not in $\mathbb F_p$ and hence the corresponding brackets are not zero, or $[\alpha_1-\beta_j]\ge k$ and hence $[\alpha_1-\beta_j]+1-q$ can be zero again only for $q=0$. Hence it follows that $q = 0$ and the singular vector is equal to $\zeta_1 \otimes \dots \otimes \zeta_k$ up to scaling.

Now the fact that this singular vector generates $L$ is proved in the same way as in characteristic zero. Hence $L$ is irreducible.
\end{proof}

From Prop. 3.2.2 we know that any finite-dimensional irreducible $Y(\mathfrak{gl}_2)$-module $L$ satisfying the condition stated there after tensoring with a one-dimensional representation is isomorphic to $L' = L(\lambda(u),p)$, where $\lambda(u)$ is a pair of polynomials. Write down $\lambda_1(u) = (1+\alpha_1u^{-1})\dots(1+\alpha_ku^{-1})$ and $\lambda_2(u) = (1+\beta_1u^{-1})\dots(1+\beta_ku^{-1})$. Here $\alpha_i,\beta_i$ are ordered in a way consistent with Prop. 3.2.3. Now consider $L(\alpha_1,\beta_1,p)\otimes \dots \otimes L(\alpha_k,\beta_k,p)$. By the above discussion it follows that this module is also isomorphic to $L(\lambda(u),p)$. But this gives us another condition on $\alpha_i,\beta_i$. Since we know that in $L'$ there is no chains of successive weights differing by the action of $e$ of length $p-1$, it follows that no $L(\alpha_i,\beta_i,p)$ can have dimension $p-1$. So all $\alpha_i-\beta_i \in \mathbb F_p$. 

Now we are able to prove the following theorem (for the corresponding theorem in characteristic zero see Thm.3.3.3 in \cite{molev2007yangians}):
\begin{thm}
Suppose $L$ is a representation satisfying the assumption of Prop. 3.2.2  which is isomorphic to $L(\lambda(u),p)$. Then there is a monic polynomial $P(u)$ in $u$ such that:
$$
\frac{\lambda_1(u)}{\lambda_2(u)} = \frac{P(u+1)}{P(u)} \ .
$$
\end{thm}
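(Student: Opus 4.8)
The plan is to follow the characteristic‑zero argument (Thm. 3.3.3 in \cite{molev2007yangians}) essentially verbatim, the only modification being that the nonnegative integer $\alpha_i-\beta_i$ that appears there is replaced everywhere by its representative $[\alpha_i-\beta_i]\in\mathbb Z_{\ge 0}$. This substitution is legitimate precisely because of the structural facts already assembled in the discussion preceding the theorem: $L(\lambda(u),p)$ decomposes as $L(\alpha_1,\beta_1,p)\otimes\dots\otimes L(\alpha_k,\beta_k,p)$ (irreducible by Prop. 3.2.3), and comparing dimensions with $L'$ forces each $\alpha_i-\beta_i\in\mathbb F_p$.

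First I would normalize $\lambda(u)$. By Prop. 3.2.2 there is $f(u)\in 1+u^{-1}\overline{\mathbb F}_p[[u^{-1}]]$ with $f(u)\lambda_1(u)$ and $f(u)\lambda_2(u)$ polynomials in $u^{-1}$; since $\frac{\lambda_1(u)}{\lambda_2(u)}=\frac{f(u)\lambda_1(u)}{f(u)\lambda_2(u)}$ and $P(u)$ is only asked to reproduce this ratio, there is no loss in assuming from the start that $\lambda_1(u),\lambda_2(u)\in 1+u^{-1}\overline{\mathbb F}_p[u^{-1}]$ (this amounts to tensoring $L$ with a one‑dimensional $Y(\mathfrak{gl}_2)$‑module, which changes neither the hypotheses nor the ratio). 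Then I would factor them over $\overline{\mathbb F}_p$ as $\lambda_1(u)=\prod_{i=1}^k(1+\alpha_i u^{-1})$ and $\lambda_2(u)=\prod_{i=1}^k(1+\beta_i u^{-1})$, padding the shorter one with trivial factors $1$, and ordering the roots in a way consistent with Prop. 3.2.3 exactly as in the paragraph before the statement. As recalled there, this gives $\alpha_i-\beta_i\in\mathbb F_p$ for every $i$, which is the one genuinely new ingredient.

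Now I would simply exhibit $P(u)$. For each $i$ set $m_i=[\alpha_i-\beta_i]\in\mathbb Z_{\ge 0}$ and
$$
P_i(u)=\prod_{j=0}^{m_i-1}(u+\beta_i+j),\qquad P(u)=\prod_{i=1}^k P_i(u),
$$
with the empty product understood to be $1$; then $P(u)$ is monic of degree $\sum_i m_i$. A telescoping computation gives
$$
\frac{P_i(u+1)}{P_i(u)}=\frac{\prod_{j=1}^{m_i}(u+\beta_i+j)}{\prod_{j=0}^{m_i-1}(u+\beta_i+j)}=\frac{u+\beta_i+m_i}{u+\beta_i}=\frac{u+\alpha_i}{u+\beta_i},
$$
where the last equality uses $\beta_i+m_i=\beta_i+[\alpha_i-\beta_i]=\alpha_i$ in $\overline{\mathbb F}_p$. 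Taking the product over $i$ yields $\frac{P(u+1)}{P(u)}=\prod_i\frac{u+\alpha_i}{u+\beta_i}=\prod_i\frac{1+\alpha_i u^{-1}}{1+\beta_i u^{-1}}=\frac{\lambda_1(u)}{\lambda_2(u)}$, as an identity of rational functions, equivalently of formal series in $u^{-1}$ with constant term $1$, which is exactly the claim.

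I do not expect a serious obstacle here: once Prop. 3.2.2, Prop. 3.2.3 and the dimension count guaranteeing $\alpha_i-\beta_i\in\mathbb F_p$ are available, the construction of $P(u)$ is purely formal and identical to the one in \cite{molev2007yangians}. The only place that truly requires the positive‑characteristic bookkeeping is the replacement of $\alpha_i-\beta_i$ by $m_i=[\alpha_i-\beta_i]$, and it is precisely the congruence $m_i\equiv\alpha_i-\beta_i\pmod p$ that makes $\beta_i+m_i=\alpha_i$ hold in $\overline{\mathbb F}_p$ and hence makes the telescoping close up.
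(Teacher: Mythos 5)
Your proof is correct and takes essentially the same approach as the paper: normalize $\lambda(u)$ to a pair of polynomials via Prop.~3.2.2, factor to exhibit roots $\alpha_i,\beta_i$, use the previously established fact $\alpha_i-\beta_i\in\mathbb F_p$ to form $P(u)=\prod_i(u+\beta_i)(u+\beta_i+1)\cdots(u+\alpha_i-1)$ with $m_i=[\alpha_i-\beta_i]$ factors in the $i$-th block. You merely make explicit the telescoping computation and the harmless initial reduction that the paper leaves implicit.
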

\begin{proof}
The proof is easy. By the assumption $L = L(\nu(u),p)$. By Prop. 3.2.2 we can make $\lambda(u) =\nu(u)f(u)$ be a pair of polynomials with roots $\alpha_i$ and $\beta_i$ respectively. By the above we know that $\alpha_i-\beta_i \in \mathbb F_p$, hence we can take:
$$
P(u) = \prod_{i=1}^k (u+\beta_i)(u+\beta_i+1)\dots(u+\alpha_i-1) \ .
$$
\end{proof}

Let's generalize this to $Y(\mathfrak{gl}_n)$-modules in positive characteristic.

\begin{thm}
Suppose $L$ is a representation satisfying the assumptions of  Prop. 3.2.1, then there are monic polynomials $P_i(u)$ such that:
$$
\frac{\lambda_i(u)}{\lambda_{i+1}(u)} = \frac{P_i(u+1)}{P_i(u)} \ ,
$$

\end{thm}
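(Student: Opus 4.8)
The plan is to reduce the rank-$n$ statement to the rank-$2$ statement (Thm. 3.2.5), exactly as in Molev's characteristic-zero classification, exploiting that the block embeddings $Y(\mathfrak{gl}_2)\hookrightarrow Y(\mathfrak{gl}_n)$ are defined over $\mathbb{Z}$ and therefore survive reduction mod $p$. Concretely, for each $i=1,\dots,n-1$ there is an algebra homomorphism $\varphi_i\colon Y(\mathfrak{gl}_2)\to Y(\mathfrak{gl}_n)$ with $t^{(r)}_{ab}\mapsto t^{(r)}_{i+a-1,\,i+b-1}$ for $a,b\in\{1,2\}$: the RTT relations of $Y(\mathfrak{gl}_n)$ restricted to the indices $\{i,i+1\}$ are literally the RTT relations of $Y(\mathfrak{gl}_2)$, so $\varphi_i$ is well defined, and we never need its injectivity. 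In terms of generating series, $\varphi_i(t_{11}(u))=t_{ii}(u)$, $\varphi_i(t_{22}(u))=t_{i+1,i+1}(u)$ and $\varphi_i(t_{12}(u))=t_{i,i+1}(u)$.

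First I would fix the data: let $\xi\in L$ be the singular vector supplied by Prop. 3.2.1, with $Y(\mathfrak{gl}_n)$-highest weight $\lambda(u)=(\lambda_1(u),\dots,\lambda_n(u))$ and $\mathfrak{gl}_n$-weight $\lambda=(\lambda_1,\dots,\lambda_n)$, which is dominant and maximal among the weights occurring in $L$. For fixed $i$, set $L^{(i)}:=\varphi_i\big(Y(\mathfrak{gl}_2)\big)\cdot\xi\subseteq L$, viewed as a $Y(\mathfrak{gl}_2)$-module through $\varphi_i$. Then $L^{(i)}$ is finite-dimensional; moreover $\varphi_i(t_{12}(u))\xi=t_{i,i+1}(u)\xi=0$ and $\varphi_i(t_{aa}(u))\xi=\lambda_{i+a-1}(u)\xi$, so $\xi$ is a singular vector of $L^{(i)}$ of $Y(\mathfrak{gl}_2)$-highest weight $(\lambda_i(u),\lambda_{i+1}(u))$, and by the triangular decomposition of $Y(\mathfrak{gl}_2)$ (available in positive characteristic, cf. the proof of Prop. 3.2.2) $L^{(i)}$ is a highest weight module with this highest weight. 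In particular its unique irreducible quotient is $L(\lambda_i(u),\lambda_{i+1}(u),p)$, a finite-dimensional irreducible $Y(\mathfrak{gl}_2)$-module.

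Next I would check that $L^{(i)}$, hence its irreducible quotient, meets the hypotheses of Prop. 3.2.2: as a module over the copy of $\mathfrak{gl}_2\subseteq\mathfrak{gl}_n$ spanned by $E_{ii},E_{i,i+1},E_{i+1,i},E_{i+1,i+1}$ it decomposes as a direct sum of Weyl modules $V(\mu,p)$, because every $\mathfrak{sl}_2$-weight occurring in $L$ has $\mathfrak{sl}_2$-highest weight at most $\max_a(\lambda_a-\lambda_{a+1})$, which the bound on $p$ in Prop. 3.2.1 keeps in the range where the linkage principle forces all the relevant $\mathfrak{gl}_2$-modules to be semisimple; the same bound gives $p>\max_j((\mu_j)_1-(\mu_j)_2)+1$ for the $\mathfrak{gl}_2$-types $\mu_j$ appearing in $L^{(i)}$. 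Applying Thm. 3.2.5 to $L(\lambda_i(u),\lambda_{i+1}(u),p)$ (equivalently to $L^{(i)}$, since the conclusion depends only on $\lambda(u)$) produces a monic polynomial $P_i(u)$ with $\lambda_i(u)/\lambda_{i+1}(u)=P_i(u+1)/P_i(u)$; letting $i$ run over $1,\dots,n-1$ completes the proof.

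The main obstacle, and the only point where positive characteristic really intervenes, is this last step: one must make sure that after restriction to the rank-$2$ block the module is still "small enough", i.e. that the $\mathfrak{gl}_2$-weight bounds implied by the hypothesis of Prop. 3.2.1 are precisely what Prop. 3.2.2 and Thm. 3.2.5 require, and that the restricted module is a genuine direct sum of Weyl modules rather than something carrying non-split extensions. Everything else (the block homomorphisms $\varphi_i$, the triangular decomposition of $Y(\mathfrak{gl}_2)$, the fact that a finite-dimensional quotient of a finite-dimensional module is finite-dimensional) is characteristic-free and copies Molev's reduction verbatim, the genuinely new rank-$2$ input, Thm. 3.2.5, being already established.
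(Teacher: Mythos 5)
Your proposal takes essentially the same route as the paper's: reduce to the rank-$2$ Drinfeld-polynomial theorem (Thm.~3.2.4 in the paper's numbering, which you refer to as Thm.~3.2.5) via the block embeddings $Y(\mathfrak{gl}_2)\hookrightarrow Y(\mathfrak{gl}_n)$, $t_{ab}^{(r)}\mapsto t_{i+a-1,\,i+b-1}^{(r)}$; the paper's proof is this same reduction compressed to two sentences, and you usefully flesh out the verification that the restricted module falls under the rank-$2$ hypotheses of Prop.~3.2.2. One small correction to that verification: the $\mathfrak{sl}_2$-weights of the $\mathfrak{gl}_2$-constituents of $L$ under the $\{i,i+1\}$-block restriction are bounded by $\lambda_1-\lambda_n$, not by $\max_a(\lambda_a-\lambda_{a+1})$ (already for $V(2,1,0)$ over $\mathfrak{gl}_3$ the $\{1,2\}$-block $\mathfrak{sl}_2$-weights reach $2$, while $\max_a(\lambda_a-\lambda_{a+1})=1$); this is harmless for your argument, since the paper's proof itself invokes exactly the bound $\lambda_1-\lambda_n$, which is what the hypothesis of Prop.~3.2.1 (read with $(\lambda_j)_1-(\lambda_j)_n$, consistent with Lemma~3.1.2) keeps below $p$.
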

\begin{proof}
We know that $L = L(\lambda(u))$. Now using the inclusion $Y(\mathfrak{gl}_2)\rightarrow Y(\mathfrak{gl}_n)$, where $t_{i,j} \rightarrow t_{i+k,j+k}$, we may regard $L$ as a $\mathfrak{gl}_2$-module. The $\mathfrak{sl}_2$-weights with respect to this inclusion lie between $\lambda_1-\lambda_n$ and $\lambda_n-\lambda_1$, so the assumption of Thm. 3.2.4 holds, hence $\frac{\lambda_k}{\lambda_{k+1}} = \frac{P_k(u+1)}{P_k(u)}$, for some $P_k$. 
\end{proof}

We also want to be able to construct a finite-dimensional representation with given Drinfeld polynomials. 
\begin{thm}
Set $p>2$.
Suppose we have a collection of monic polynomials \\ $P_1(u), \dots, P_{n-1}(u)$. Then there is a finite-dimensional representation $L(\mu(u),p)$ such that:
$$
\frac{\mu_i(u)}{\mu_{i+1}(u)} = \frac{P_i(u+1)}{P_i(u)} \ .
$$
\end{thm}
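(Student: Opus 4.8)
The plan is to imitate the characteristic‑zero construction (cf. the analogous statement in \cite{molev2007yangians}): realize the required representation as a subquotient of a tensor product of twisted evaluation modules built from exterior powers of the natural module, and read off finite‑dimensionality from finiteness of that tensor product.

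First I would reduce to linear Drinfeld polynomials. Write each $P_i(u)=\prod_{s=1}^{d_i}(u-a_{i,s})$ over $\overline{\mathbb F}_p$. For $1\le i\le n-1$ let $V(\omega_i,p)=\Lambda^i(\overline{\mathbb F}_p^{\,n})$ be the Weyl module of $\mathfrak{gl}_n$ with highest weight $\omega_i=(1^i,0^{n-i})$, made into a $Y(\mathfrak{gl}_n)$‑module via the evaluation homomorphism $ev$ of Prop. 2.1.2(b), and for $a\in\overline{\mathbb F}_p$ let $V(\omega_i,p)_a$ be its pullback along the automorphism $\tau_a:T(u)\mapsto T(u-a)$. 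Since $\tau_a$ fixes $t^{(1)}_{ij}$, it restricts to the identity on $U(\mathfrak{gl}_n)\subset Y(\mathfrak{gl}_n)$, so the underlying $\mathfrak{gl}_n$‑module is unchanged; in particular $V(\omega_i,p)_a$ is finite‑dimensional. Set
$$
M=\bigotimes_{i=1}^{n-1}\ \bigotimes_{s=1}^{d_i} V(\omega_i,p)_{a_{i,s}}\ ,
$$
a finite‑dimensional $Y(\mathfrak{gl}_n)$‑module via the iterated coproduct.

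Next I would compute the highest weight. In $V(\omega_i,p)_a$ the vector $e_1\wedge\cdots\wedge e_i$ is annihilated by $t_{jk}(u)$ for $j<k$ (these act as $E_{jk}(u-a)^{-1}$, a raising operator), while $t_{jj}(u)$ acts on it by $1+(\omega_i)_j(u-a)^{-1}$. Using $\Delta(T(u))=T^I(u)T^{II}(u)$, the tensor $v$ of these vectors is a highest‑weight vector of $M$ with highest weight $\mu(u)$ where $\mu_j(u)=\prod_{i,s}\bigl(1+(\omega_i)_j(u-a_{i,s})^{-1}\bigr)$. Because $(\omega_i)_k=(\omega_i)_{k+1}$ unless $i=k$, while $(\omega_k)_k=1$ and $(\omega_k)_{k+1}=0$, all factors with $i\ne k$ drop out of the ratio and
$$
\frac{\mu_k(u)}{\mu_{k+1}(u)}=\prod_{s=1}^{d_k}\frac{u-a_{k,s}+1}{u-a_{k,s}}=\frac{P_k(u+1)}{P_k(u)}\ ,
$$
the required identity. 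Then $L(\mu(u),p)$ — the irreducible quotient of $M(\mu(u),p)$ — is an irreducible quotient of the highest‑weight submodule $Y(\mathfrak{gl}_n)\cdot v\subseteq M$, hence a subquotient of the finite‑dimensional module $M$, hence finite‑dimensional, and by Thm. 3.2.5 its Drinfeld polynomials are $P_1,\dots,P_{n-1}$.

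The main obstacle is this last step. In positive characteristic the factors $V(\omega_i,p)_a$ and their tensor product need not be irreducible, and the weight order can a priori contain cycles, so one must invoke the highest‑weight formalism set up in the previous subsections (Prop. 3.2.1 and the surrounding discussion, using $p>2$) to be sure that $Y(\mathfrak{gl}_n)\cdot v$ really does have $L(\mu(u),p)$ as its unique irreducible quotient — this is exactly the point where the passage to positive characteristic is delicate. The remaining ingredients — the formula for the highest weight of an evaluation module, the fact that $\tau_a$ fixes $U(\mathfrak{gl}_n)$, and the multiplicativity of highest weights under tensor product — are routine.
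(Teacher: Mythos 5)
Your construction is the same as the paper's: both realize the desired module as a subquotient of a tensor product of shifted exterior-power evaluation modules (the paper, following Molev's Thm.\ 3.4.1, uses weights $\mu^{(k)}$ with $\mu^{(k)}_1-\mu^{(k)}_n=1$, which are the $\omega_i$ up to tensoring with a character and twisting by an automorphism $R_f$; your twist $\tau_a$ packages the same data). The highest-weight computation and the verification of $\mu_k/\mu_{k+1}=P_k(u+1)/P_k(u)$ are correct and match the paper.

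However, the step you flag as ``the delicate point'' does have a genuine gap, and your proposed fix is the wrong one. Prop.\ 3.2.1 requires $p>\max_j\bigl((\lambda_j)_1-(\lambda_j)_2\bigr)+n$, a bound that grows with the degrees of the $P_i$; it is far stronger than the theorem's hypothesis $p>2$, so it cannot be invoked here. For $p$ as small as $3$ with $\sum\deg P_i$ large, the $\overline{\mathbb F}_p$-valued $\mathfrak{gl}_n$-weights appearing in $M$ genuinely can collide modulo $p$, and the partial order they induce can have cycles, so the naive highest-weight formalism on the reduced weights breaks down. The paper's resolution is different: the tensor product $M=\bigotimes V(\omega_i,p)_{a_{i,s}}$ is built from modules whose underlying spaces and structure constants are integral, so $M$ carries an \emph{external} $\mathbb Z^n$-grading inherited from characteristic zero (remembering the integral weight of each basis tensor, not merely its reduction mod $p$), and the $Y(\mathfrak{gl}_n)$-action respects this grading. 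With this external grading there are no cycles regardless of how small $p$ is, the cyclic submodule $Y(\mathfrak{gl}_n)\cdot v$ has a well-defined unique maximal proper submodule, and its simple top is the finite-dimensional $L(\mu(u),p)$. Replacing your appeal to Prop.\ 3.2.1 by this external-grading argument closes the gap; the remark after the theorem in the paper then records separately that, under the \emph{additional} hypothesis $\sum\deg P_i+n<p$, the resulting $L(\mu(u),p)$ also satisfies the conditions of Prop.\ 3.2.1.
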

\begin{proof}
The second part of the proof of Thm. 3.4.1 in \cite{molev2007yangians} can be repeated without any problem.

Indeed each of $L(\mu^{(k)},p)$ has $\mu^{(k)}_1 - \mu^{(k)}_n = 1$, so they are equal to $V(\mu^{(k)},p)$. Now $L(\mu^{(1)},p) \otimes \dots \otimes L(\mu^{(k)},p)$ has an external grading induced from the characteristic zero case which is consistent with the $Y(\mathfrak{gl}_n)$-action. So it follows that $L(\mu(u),p)$ is a subquotient in this finite-dimensional module.
\end{proof}

\begin{rem}
 Note that the weight with maximal $\lambda_1-\lambda_2$ appearing in $L(\mu^{(1)},p) \otimes \dots \otimes L(\mu^{(k)},p)$ is equal to $\nu \sum_k \mu^{(k)}$. Moreover, $\nu_1-\nu_n = \sum_i \deg \ P_i$. Thus for $\sum \deg \ P_i + n < p$ it follows that the corresponding $L(\mu(u))$ satisfies the conditions of Prop. 3.2.1.
\end{rem}
\begin{rem}
Another difference between the positive characteristic case and zero characteristic case is the fact that $P_i$ in Thm. 3.2.5 are not unique. Indeed from $\frac{Q_i(u+1)}{Q_i(u)} = \frac{P_i(u+1)}{P_i(u)}$, it follows that $\frac{P_i}{Q_i} = F_i$ satisfies $F_i(u)=F_i(u+1)$. Thus $F_i$ is a ratio of products of expressions of the form $(u+c)(u+1+c) \dots (u+c+p-1) = (u+c)^p-(u+c)$ for some $c \in \overline{\mathbb F}_p$. Further, we set $q_p(u) := u^p-u$.
\end{rem}

This also shows the following:
\begin{cor}
Suppose $L$ is a representation satisfying the assumptions of Prop. 3.2.1. Then it is a subquotient of a tensor product of evaluation representations.
\end{cor}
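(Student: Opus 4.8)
The plan is to reduce Corollary~3.2.7 to Theorem~3.2.6: match Drinfeld polynomials so as to land inside an explicit tensor product of evaluation modules, then absorb the remaining discrepancy by a Yangian automorphism.

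First I would invoke Prop.~3.2.1 (and the discussion immediately following it) to write $L\cong L(\lambda(u),p)$ with each $\lambda_i(u)\in 1+u^{-1}\overline{\mathbb F}_p[[u^{-1}]]$, and then Theorem~3.2.5 to produce monic polynomials $P_1(u),\dots,P_{n-1}(u)$ with $\lambda_i(u)/\lambda_{i+1}(u)=P_i(u+1)/P_i(u)$. Feeding this tuple into Theorem~3.2.6 (applicable since $p>2$, which is forced by the hypotheses of Prop.~3.2.1) produces a finite-dimensional module $L(\mu(u),p)$ with $\mu_i(u)/\mu_{i+1}(u)=P_i(u+1)/P_i(u)$; by the construction in its proof, $L(\mu(u),p)$ is a subquotient of a tensor product $V(\mu^{(1)},p)\otimes\cdots\otimes V(\mu^{(k)},p)$ of evaluation modules.

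Since the two tuples have the same consecutive ratios, $f(u):=\lambda_i(u)/\mu_i(u)$ is independent of $i$ and lies in $1+u^{-1}\overline{\mathbb F}_p[[u^{-1}]]$, so $\lambda(u)=f(u)\mu(u)$. By Prop.~2.1.2(e), valid over $\overline{\mathbb F}_p$ as noted at the start of Section~3, the automorphism $R_f\colon T(u)\mapsto f(u)T(u)$ of $Y(\mathfrak{gl}_n)$ carries the highest-weight module $L(\mu(u),p)$ to $L(f(u)\mu(u),p)=L(\lambda(u),p)\cong L$. As $f(u)$ is a scalar series we have $\Delta(R_f(T(u)))=(R_f\otimes\Id)\Delta(T(u))$, so $R_f^*$ applied to a tensor product affects only the first factor; hence $L$ is a subquotient of $R_f^*(V(\mu^{(1)},p))\otimes V(\mu^{(2)},p)\otimes\cdots\otimes V(\mu^{(k)},p)$, where $R_f^*(V(\mu^{(1)},p))$ is the evaluation module $V(\mu^{(1)},p)$ tensored with the one-dimensional representation $t_{ij}(u)\mapsto\delta_{ij}f(u)$. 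Equivalently, $L$ tensored with a one-dimensional representation is a subquotient of a tensor product of evaluation modules, which is the desired statement (in the form of Thm.~2.2.1).

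The step I expect to be the main obstacle --- really the only point needing care --- is controlling the twist: one has to check that $R_f$ interacts with the coproduct so that the correction by $f(u)$ can be pushed onto a single tensor factor (or recorded as a one-dimensional representation) without disturbing the evaluation-module structure of the remaining factors. The rest is a direct assembly of Prop.~3.2.1, Thm.~3.2.5, Thm.~3.2.6 and Prop.~2.1.2(e).
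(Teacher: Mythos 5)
Your proof matches the paper's own (very terse) argument: invoke Thm.~3.2.5 to extract the Drinfeld polynomials from $L = L(\lambda(u),p)$, then feed them into the tensor-product construction from the proof of Thm.~3.2.6. What you have added, correctly, is the explicit treatment of the scalar twist $f(u) = \lambda_i(u)/\mu_i(u)$: the paper simply writes ``Apply Thm.~3.2.5 and then the proof of Thm.~3.2.6,'' which really only produces a subquotient realisation of some $L(\mu(u),p)$ with the same Drinfeld polynomials, not of $L$ itself; the two differ by $R_f$. Your observation that $\Delta\circ R_f = (R_f\otimes\Id)\circ\Delta$, so the twist can be absorbed into a single one-dimensional factor, is exactly the missing glue, and your conclusion --- that the corollary really holds ``up to tensoring with a one-dimensional representation,'' parallel to Thm.~2.2.1 --- is the accurate reading of what the paper proves (and is all that is used later in Prop.~4.2.6, where $L$ is viewed as a $Y(\mathfrak{sl}_n)$-module and the twist $R_f$ acts trivially).
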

\begin{proof}
Apply Thm. 3.2.5 and then the proof of Thm. 3.2.6.
\end{proof}

\begin{cor}
Finite-dimensional representations of $Y(\mathfrak{gl}_n)$ satisfying the condition of Prop. 3.2.1 are classified by tuples $(f(u),[P_1],\dots,[P_{n-1}])$, where $f(u) \in 1 + u^{-1}\overline{\mathbb F}_p[[u^{-1}]]$, $\sum \deg \ P_i + n < p$ and $[ \ ]$ denotes the equivalence classes generated by relation  $P_i \equiv Q_i$ if $Q_i = P_iq_p(u+c)$.
\end{cor}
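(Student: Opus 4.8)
The plan is to obtain this classification by assembling the results of this subsection, in direct parallel with the characteristic-zero statement Thm.~2.2.2. By Prop.~3.2.1 and the discussion following it, every finite-dimensional representation $L$ satisfying the stated hypothesis is isomorphic to a highest-weight module $L(\lambda(u),p)$ for some tuple of series $\lambda(u)=(\lambda_1(u),\dots,\lambda_n(u))$ with $\lambda_i(u)\in 1+u^{-1}\overline{\mathbb F}_p[[u^{-1}]]$, and distinct $\lambda(u)$ give non-isomorphic $L$, since an irreducible highest-weight module determines its highest weight. So it suffices to describe which tuples $\lambda(u)$ arise and to repackage this data in the asserted form.

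To $L=L(\lambda(u),p)$ I would attach the series $f(u):=\lambda_1(u)$ (a free normalization) together with, for each $i$, a monic polynomial $P_i(u)$ with $\lambda_i(u)/\lambda_{i+1}(u)=P_i(u+1)/P_i(u)$, which exists by Thm.~3.2.5. Since $q_p(u+c+1)=q_p(u+c)$ in characteristic $p$, replacing $P_i$ by $P_i\,q_p(u+c)$ does not change the ratio $\lambda_i/\lambda_{i+1}$, so \emph{only} the class $[P_i]$ is canonically attached to $L$; conversely, as noted in the second Remark following Thm.~3.2.6, any two monic polynomials producing the same ratio differ by such factors, so $[P_i]\mapsto\lambda_i(u)/\lambda_{i+1}(u)$ is a bijection from equivalence classes onto the ratios that occur. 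Hence $L\mapsto(f(u),[P_1],\dots,[P_{n-1}])$ is well-defined, and it is injective, since $\lambda(u)$ is recovered from $f$ and the ratios by $\lambda_{i+1}(u)=\lambda_i(u)\,P_i(u)/P_i(u+1)$.

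It remains to match the image with the tuples satisfying $\sum_i\deg P_i+n<p$ for suitable representatives. For surjectivity, given such a tuple, Thm.~3.2.6 produces a finite-dimensional $L(\mu(u),p)$ with $\mu_i(u)/\mu_{i+1}(u)=P_i(u+1)/P_i(u)$, which by the first Remark following Thm.~3.2.6 satisfies the hypothesis of Prop.~3.2.1; composing with the automorphism $T(u)\mapsto g(u)T(u)$ of Prop.~2.1.2(e), with $g(u)=f(u)/\mu_1(u)\in 1+u^{-1}\overline{\mathbb F}_p[[u^{-1}]]$, multiplies the highest weight by $g(u)$, hence fixes every ratio and normalizes $\lambda_1(u)$ to $f(u)$ while only shifting all $\mathfrak{gl}_n$-weights by a common scalar, so the hypothesis of Prop.~3.2.1 still holds; this representation maps to the given tuple. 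Conversely, if $L=L(\lambda(u),p)$ satisfies the hypothesis of Prop.~3.2.1, its maximal $\mathfrak{gl}_n$-weight $\nu$ is integral dominant with $\nu_1-\nu_n+n<p$ (the $V(\lambda_j,p)$ occurring in $L$ being simple Weyl modules, Lemma~3.1.2), and the $Y(\mathfrak{gl}_2)$-reduction in the proof of Thm.~3.2.5 furnishes polynomials with $\deg P_i=\nu_i-\nu_{i+1}$, so $\sum_i\deg P_i=\nu_1-\nu_n<p-n$; these representatives already meet the degree bound. Combining the two directions gives the asserted bijection.

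The step I expect to be the main obstacle is the bookkeeping forced by the non-uniqueness of the Drinfeld polynomials in characteristic $p$. One has to check that the equivalence generated by $P_i\mapsto P_i\,q_p(u+c)$ is exactly the fibre relation of $P\mapsto P(u+1)/P(u)$ on monic polynomials — the nontrivial direction being that a ratio of two monic polynomials invariant under $u\mapsto u+1$ must be a ratio of products of the $q_p(u+c)$, which follows because over $\overline{\mathbb F}_p$ such invariance forces the root multisets of numerator and denominator to be unions of $\mathbb F_p$-cosets — and that the condition $\sum_i\deg P_i+n<p$ is independent of the admissible choice of representatives and corresponds exactly to the hypothesis of Prop.~3.2.1; verifying that the scalar weight-shift coming from the Prop.~2.1.2(e) twist does not disturb that hypothesis is a smaller point of the same kind.
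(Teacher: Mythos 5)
Your assembly of the preceding results is the natural way to read this corollary (the paper itself gives no proof), and most of the bookkeeping you do is correct: the map $L\mapsto(f(u),[P_1],\dots,[P_{n-1}])$ is well-defined and injective for the reason you give, and the surjectivity via Thm.~3.2.6 followed by a twist by $g(u)=f(u)/\mu_1(u)$ is fine, since the twist shifts all $\mathfrak{gl}_n$-weights by the same scalar and so preserves the hypothesis of Prop.~3.2.1 (this is exactly what the first remark after Thm.~3.2.6 is for).

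The one step you should not wave off is the assertion that ``the $Y(\mathfrak{gl}_2)$-reduction in the proof of Thm.~3.2.5 furnishes polynomials with $\deg P_i = \nu_i-\nu_{i+1}$.'' What the construction in Thm.~3.2.4 actually hands you is $P_i(u)=\prod_j (u+\beta_j)\cdots(u+\alpha_j-1)$ with $\deg P_i=\sum_j[\alpha_j-\beta_j]$, and comparing $u^{-1}$-coefficients of $P_i(u+1)/P_i(u)$ with $\lambda_i(u)/\lambda_{i+1}(u)$ only shows $\deg P_i\equiv \nu_i-\nu_{i+1}\pmod p$. In characteristic zero this congruence is an equality automatically; in characteristic $p$ it is not, because $\sum_j[\alpha_j-\beta_j]$ is a sum of as many terms as there are linear factors in the normalized $\lambda_i$, and there is no a priori reason for that integer sum to be $<p$. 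You need a separate argument that under the hypothesis of Prop.~3.2.1 the multiset $\{(\alpha_j,\beta_j)\}$ produced by Prop.~3.2.3 already has $\sum_j[\alpha_j-\beta_j]<p$ --- for instance by comparing the $f$-chain length from $\zeta_1\otimes\cdots\otimes\zeta_k$ in $L(\alpha_1,\beta_1,p)\otimes\cdots\otimes L(\alpha_k,\beta_k,p)$ with the maximal $\mathfrak{sl}_2$-weight spread of $L$, which the hypothesis bounds by $p-n$ --- before you are entitled to conclude $\deg P_i=\nu_i-\nu_{i+1}$ and hence $\sum_i\deg P_i+n<p$. As written, this is the point where your proof and the corresponding characteristic-zero argument actually diverge, and it is the one place where the degree condition in the statement is doing real work rather than simply being propagated.
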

\begin{cor}
Finite-dimensional representations of $Y(\mathfrak{sl}_n)$ satisfying the condition of Prop. 3.2.1 are classified by tuples $[P_1],\dots,[P_{n-1}]$, where $\sum_i \deg (P_i) + n < p$ and $[ \ ]$ denotes the equivalence classes generated by relation  $P_i \equiv Q_i$ if $Q_i = P_iq_p(u+c)$.
\end{cor}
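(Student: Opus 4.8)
The plan is to derive this from the $Y(\mathfrak{gl}_n)$-classification, Cor. 3.2.9, together with the positive-characteristic analogue of Prop. 2.1.3(c), namely $Y(\mathfrak{sl}_n)=Y(\mathfrak{gl}_n)/(\mathrm{qdet}\,T(u)-1)$, which holds for $p>n$ by Thm. 6.1 of \cite{brundan2017p}. Since an irreducible module over a quotient ring $R/I$ is the same thing as an irreducible $R$-module annihilated by $I$, this presentation identifies irreducible finite-dimensional $Y(\mathfrak{sl}_n)$-modules with irreducible finite-dimensional $Y(\mathfrak{gl}_n)$-modules on which all coefficients of $\mathrm{qdet}\,T(u)-1$ act by zero, i.e. on which $\mathrm{qdet}\,T(u)$ acts as the constant series $1$. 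Under this identification the hypothesis of Prop. 3.2.1 matches up: if the $Y(\mathfrak{sl}_n)$-module is $\bigoplus_j V(\mu_j,p)$ as an $\mathfrak{sl}_n$-module, then the corresponding $Y(\mathfrak{gl}_n)$-module is $\bigoplus_j V(\lambda_j,p)$ with $(\lambda_j)_1-(\lambda_j)_n$ depending only on $\mu_j$ (adding a multiple of $(1,\dots,1)$ to $\lambda_j$ changes nothing), so the condition on $p$ is intrinsic. Thus it remains to count the irreducible finite-dimensional $Y(\mathfrak{gl}_n)$-modules satisfying Prop. 3.2.1 on which $\mathrm{qdet}\,T(u)$ acts trivially.

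Next I would read this off the parametrization of Cor. 3.2.9. On the highest-weight vector of $L(\lambda(u),p)$ the standard computation shows that the central series $\mathrm{qdet}\,T(u)$ acts by the product $\prod_{i=1}^n\lambda_i(u-n+i)$ of $n$ integer-shifted diagonal series, and by Cor. 3.2.9 and Thm. 3.2.5 the Drinfeld data $[P_1],\dots,[P_{n-1}]$ determines every ratio $\lambda_i(u)/\lambda_{i+1}(u)$, so we may write $\lambda_i(u)=f(u)R_i(u)$ with $R_i\in 1+u^{-1}\overline{\mathbb F}_p[[u^{-1}]]$ rational in $u$ and depending only on the $[P_j]$. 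Hence $\mathrm{qdet}\,T(u)$ acts by $\bigl(\prod_{i=1}^n f(u-n+i)\bigr)\,C(u)$, where $C(u)=\prod_{i=1}^n R_i(u-n+i)\in 1+u^{-1}\overline{\mathbb F}_p[[u^{-1}]]$ is fixed once the $[P_j]$ are fixed (it is the value of $\mathrm{qdet}\,T(u)$ on the module produced by Thm. 3.2.6 for these polynomials with $f\equiv 1$, so in particular it is invertible in $1+u^{-1}\overline{\mathbb F}_p[[u^{-1}]]$). The key lemma is that $f(u)\mapsto\prod_{i=1}^n f(u-n+i)$ is a bijection of $1+u^{-1}\overline{\mathbb F}_p[[u^{-1}]]$ onto itself; granting it, for each tuple $[P_1],\dots,[P_{n-1}]$ with $\sum_i\deg P_i+n<p$ there is exactly one $f(u)$, the preimage of $C(u)^{-1}$, making $\mathrm{qdet}\,T(u)$ act as $1$, so the surviving $Y(\mathfrak{gl}_n)$-modules are in bijection with such tuples and the corollary follows. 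Surjectivity onto all such tuples is automatic: take the module of Thm. 3.2.6 (it satisfies Prop. 3.2.1 by the Remark following that theorem), apply the unique $\mathrm{qdet}$-trivializing automorphism from Prop. 2.1.2(e) — which fixes $Y(\mathfrak{sl}_n)$ pointwise and alters neither the classes $[P_j]$ nor the condition on $p$ — and restrict to $Y(\mathfrak{sl}_n)$.

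The main obstacle is the bijectivity lemma in positive characteristic. The characteristic-zero proof takes formal logarithms and inverts the additive map $g(u)\mapsto\sum_{i=1}^n g(u-n+i)$, but $\log$ and $\exp$ are not available over $\overline{\mathbb F}_p$, so I would argue directly: writing $f(u)=1+\sum_{k\ge 1}f_ku^{-k}$, the coefficient of $u^{-k}$ in $\prod_{i=1}^n f(u-n+i)$ equals $nf_k$ plus a universal polynomial in $f_1,\dots,f_{k-1}$ (and in $n$); since $p>n$ the scalar $n$ is invertible in $\overline{\mathbb F}_p$, so each $f_k$ is uniquely recovered from the lower-order coefficients of the target, yielding both injectivity and surjectivity. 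The corollary's hypothesis $\sum_i\deg P_i+n<p$ in particular forces $p>n$, so the lemma applies. Everything else is the bookkeeping already indicated above: transporting the Prop. 3.2.1 hypothesis across the $\mathfrak{gl}$–$\mathfrak{sl}$ correspondence, and identifying $C(u)$ with a value of $\mathrm{qdet}\,T(u)$ so that it is a unit of $1+u^{-1}\overline{\mathbb F}_p[[u^{-1}]]$.
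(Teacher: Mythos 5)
Your argument is correct, and it is essentially the route the paper has in mind: since the paper does not write out a proof of this corollary, it is implicitly relying on the decomposition $Y(\mathfrak{gl}_n)=Y(\mathfrak{sl}_n)\otimes Z_{HC}(Y(\mathfrak{gl}_n))$ (valid for $p>n$ by Brundan's theorem, recalled at the start of Section~3) to reduce to the $Y(\mathfrak{gl}_n)$ statement, which is exactly your starting point. Your elaboration is sound: the identification of irreducible $Y(\mathfrak{sl}_n)$-modules with irreducible $Y(\mathfrak{gl}_n)$-modules on which $\mathrm{qdet}\,T(u)$ acts by $1$, the reduction of the matching to the bijectivity of $f(u)\mapsto\prod_{i=1}^n f(u-n+i)$ on $1+u^{-1}\overline{\mathbb F}_p[[u^{-1}]]$, and the direct recursive argument for that bijectivity (valid precisely because $p>n$, which is forced by the hypothesis $\sum_i\deg P_i+n<p$) are all correct and fill a gap the paper leaves silent. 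The only blemishes are cosmetic: your internal references to ``Cor.~3.2.9'' for the $Y(\mathfrak{gl}_n)$ classification should read Cor.~3.2.8, and it is worth noting that the invertibility of $C(u)$ is automatic once one knows it lies in $1+u^{-1}\overline{\mathbb F}_p[[u^{-1}]]$, independently of its interpretation as a value of $\mathrm{qdet}\,T(u)$.
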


\section{Yangians in complex rank}

\subsection{Yangians $Y(\mathfrak{gl}_t)$ and $Y(\mathfrak{sl}_t)$}

For this subsection fix $t \in \mathbb C \backslash \mathbb Z$.

Below $t_n$ are the same as in Thm. 1.3.1(b) for algebraic $t$ and $t_n = n$ for transcendental $t$. Similarly $p_n$ are as in Thm. 1.3.1(b) for algebraic $t$ and $p_n = 0$ for transcendental $t$. Also $\overline{\mathbb F}_0 := \overline{\mathbb Q}$. 

To define $Y(\mathfrak{gl}_t)$ we will mimic the Faddeev-Reshetikhin-Takhtajan presentation of $Y(\mathfrak{gl}_n)$ in the following way (following section 7 in \cite{etingof2016representation}). 

For $i \in \mathbb Z_{> 0}$ denote by $V_i\simeq V$ a collection of objects of $\Rep (GL_t)$. Consider the tensor algebra $A = T(\bigoplus_{i=1}^{\infty}V_i\otimes V_i^*)$. This is an ind-object of $\Rep (GL_t)$. Obviously it is generated as an algebra by the images of the inclusion maps $T_i: V \otimes V^* \rightarrow A$ sending $V\otimes V^*$ to $V_i\otimes V_i^*$. Using this we can define a formal power series in $u^{-1}$ with coefficients in $\Hom(V\otimes V^*, A)$, $T(u) = 1 + \sum_{i>0} T_iu^{-i}$. Also by composing with certain evaluation and coevaluation maps we can regard $T(u)$ as an element of $\Hom(V,V\otimes A)[[u^{-1}]]$. Now define $R(u)\in \Hom(V\otimes V, V \otimes V)[[u^{-1}]]$ in the same way as for integer rank we set: $R(u)=1 +\frac{\sigma}{u}$, where $\sigma$ interchanges factors in the tensor product. Consider an element of $\Hom(V_I \otimes V_{II}, V_I\otimes V_{II} \otimes A)[[u^{-1},v^{-1}]]$ (here $V_I \simeq V_{II} \simeq V$) given by
\begin{equation}
    Q(u,v) = (u-v)(R(u-v)T^I(u)T^{II}(v) - T^{II}(v)T^I(u)R(u-v)) \ ,
\end{equation}
where $i$ in $T^i$ specifies  on which $V_i$ it acts, and $R$ in the first summand acts on $V_I\otimes V_{II} \otimes A$ by $R \otimes \Id$. Again using the evaluation and coevaluation maps, we can think of $Q$ as an element of  $\Hom(V\otimes V^* \otimes V\otimes V^*, A)[[u^{-1},v^{-1}]]$. Now write $Q(u,v) = \sum_{i,j}Q_{i,j} u^{-i}v^{-j}$. We are ready to define the Yangian:
\begin{def0}
The Yangian of $\mathfrak{gl}_t$, $Y(\mathfrak{gl}_t)$, is the algebra obtained as the cokernel of the following map of ind-objects:
$$
\bigoplus_{i,j} Q_{i,j}: \bigoplus_{i,j} V\otimes V^* \otimes V\otimes V^* \rightarrow A \ ,
$$
or, equivalently, is the quotient of $A$ by the quadratic relations given by $Q_{i,j}$.
\end{def0}

\begin{rem}
To specify an algebra homomorphism from $A$, it is sufficient to give a bunch of morphisms from $V\otimes V^*$, since $T_i$ freely generate $A$. To specify an algebra homomorphism from $Y(\mathfrak{gl}_t)$, it is sufficient to give a bunch of maps from $V \otimes V^*$ such that the quadratic relations $Q_{i,j}$ are satisfied, since $T_i$ generate the Yangian.
\end{rem}

The next step is to generalize some important properties  by direct methods, but we will rather use the methods of ultraproducts and Łoś's theorem since these are the main tools of this paper. The main fact which we are going to use is that $\prod_{\mathcal F} Y(\mathfrak{gl}_{t_n})$ is equal to $Y(\mathfrak{gl}_t)$ which follows from the definition, since it uses exactly the same spaces and maps as the finite rank definition (meaning that they are ultraproducts of the spaces/maps in the finite case).

These properties were stated in \cite{etingof2016representation} section 7. Here we provide their proof:

\begin{prop}

\textbf{a)} There is a Hopf algebra structure on $Y(\mathfrak{gl}_t)$ given by $\Delta(T(u)) = T^I(u)T^{II}(u)$ and $S(T(u)) = T(u)^{-1}$. 

\textbf{b)} There is an algebra homomorphism $i:U(\mathfrak{gl}_t) \rightarrow Y(\mathfrak{gl}_t)$, which on $V\otimes V^* \subset U(\mathfrak{gl}_t)$ acts as $T_1$.

\textbf{c)} There is a homomorphism $ev:Y(\mathfrak{gl}_t)\rightarrow U(\mathfrak{gl}_t)$ given by $T(u) \mapsto R(u)$.
\end{prop}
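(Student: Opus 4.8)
The plan is to deduce all three parts from the single structural fact highlighted just before the proposition, namely that $Y(\mathfrak{gl}_t) = \prod_{\mathcal F} Y(\mathfrak{gl}_{t_n})$ as an ind-algebra in $\widehat{\mathcal C}$, together with the ultraproduct realization of $\Rep(GL_t)$ from Thm. 1.3.1. The key observation is that each of the structures in (a)--(c) already exists at finite rank by Prop. 2.1.2, and is defined by exactly the same formulas in terms of $T(u)$, $R(u)$, evaluation and coevaluation maps; since all of these are morphisms in $\textbf{Rep}(GL_{t_n})$ (or $\textbf{Rep}_{p_n}(GL_{t_n})$), we may take their ultraproducts and land in $\widehat{\mathcal C}$, and then check the defining identities via Łoś's theorem. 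So the first step is to set up, for each $n$, the finite-rank data: the coproduct $\Delta_n$, antipode $S_n$, the embedding $i_n : U(\mathfrak{gl}_{t_n}) \to Y(\mathfrak{gl}_{t_n})$ and the evaluation $ev_n : Y(\mathfrak{gl}_{t_n}) \to U(\mathfrak{gl}_{t_n})$, all of which exist by Prop. 2.1.2; one must note here that $U(\mathfrak{gl}_t)$ is likewise the ultraproduct $\prod_{\mathcal F} U(\mathfrak{gl}_{t_n})$ inside $\widehat{\mathcal C}$ (this is how it is constructed in \cite{etingof2016representation}), so all four maps are maps between ultraproduct ind-objects living in the Deligne-category subcategory.

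For part (a): I would take $\Delta = \prod_{\mathcal F} \Delta_n$ and $S = \prod_{\mathcal F} S_n$. These are morphisms of ind-objects $Y(\mathfrak{gl}_t) \to Y(\mathfrak{gl}_t) \otimes Y(\mathfrak{gl}_t)$ and $Y(\mathfrak{gl}_t) \to Y(\mathfrak{gl}_t)$ respectively, because each is an ultraproduct of such, and because tensor products, kernels and cokernels in $\widehat{\mathcal C}$ are computed componentwise (this is exactly the content of the last Example in Section 1.2). The Hopf algebra axioms --- coassociativity, counit, the antipode axiom, compatibility of $\Delta$ with multiplication --- are all equalities of morphisms between finitely-presented ind-objects, hence can be tested on each graded piece, where they become first-order statements about the structure constants; since they hold for almost all $n$ by Prop. 2.1.2(a), Łoś's theorem gives them in the ultraproduct. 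The fact that $\Delta(T(u)) = T^I(u) T^{II}(u)$ and $S(T(u)) = T(u)^{-1}$ is then automatic, since these formulas characterize $\Delta_n$ and $S_n$ at each finite rank and the generators $T_i$ are ultraproducts of the finite-rank $t^{(k)}_{ij}$ repackaged $GL$-equivariantly.

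For part (b): the embedding $i_n : E_{ij} \mapsto t^{(1)}_{ij}$ of Prop. 2.1.2(c) is, in the equivariant repackaging, precisely the map that sends the copy of $V \otimes V^* \subset U(\mathfrak{gl}_{t_n})$ to $Y(\mathfrak{gl}_{t_n})$ via $T_1$; taking $i = \prod_{\mathcal F} i_n$ gives an algebra homomorphism $U(\mathfrak{gl}_t) \to Y(\mathfrak{gl}_t)$ whose restriction to $V \otimes V^*$ is $T_1$, and that it is a Hopf algebra map again follows from Łoś. For part (c): set $ev = \prod_{\mathcal F} ev_n$ where $ev_n : T(u) \mapsto R(u)$ is the evaluation homomorphism of Prop. 2.1.2(b); one checks that $ev_n$ is well-defined on $Y(\mathfrak{gl}_{t_n})$ precisely because $R(u)$ satisfies the $RTT$-relation at rank $t_n$, a first-order fact true for almost all $n$, so by Łoś the ultraproduct map kills the relations $Q_{i,j}$ and descends to $Y(\mathfrak{gl}_t)$; since $R(u) = 1 + \sigma/u$ is literally the same expression used to define $Y(\mathfrak{gl}_t)$, the map sends $T(u)$ to $R(u)$ as claimed.

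The main obstacle I anticipate is bookkeeping rather than mathematics: one must be careful that ``$Y(\mathfrak{gl}_t) = \prod_{\mathcal F} Y(\mathfrak{gl}_{t_n})$'' and ``$U(\mathfrak{gl}_t) = \prod_{\mathcal F} U(\mathfrak{gl}_{t_n})$'' are understood at the level of ind-objects with a compatible grading/filtration, so that the componentwise computation of tensor products and cokernels really applies and so that Łoś's theorem can be invoked piece by piece (the whole algebra is infinite-dimensional, so one cannot apply Łoś to it globally --- only to each graded component, which is a bounded-dimension object by Example in Section 1.2). A secondary subtlety is that for algebraic $t$ the finite-rank Yangian lives over $\overline{\mathbb F}_{p_n}$, so one should confirm that parts (a)--(c) of Prop. 2.1.2 hold over $\overline{\mathbb F}_{p_n}$ as well --- which is noted at the start of Section 3: all of Prop. 2.1.2 except (d) survives in positive characteristic, and (d) is not used here. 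Once these compatibilities are in place, each of (a), (b), (c) is a one-line application of Łoś's theorem to the corresponding finite-rank statement.
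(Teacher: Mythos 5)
Your proposal is correct and is essentially the approach the paper takes: in both cases one observes that the finite-rank structure maps are given by the same universal formulas in $T(u)$, $R(u)$, and (co)evaluation morphisms, so that they can be identified with ultraproducts of the corresponding finite-rank maps, and all the required algebraic identities (factoring through the relations $Q_{i,j}$, the Hopf axioms, the explicit action on $V\otimes V^*$) are then transferred from finite rank by Łoś's theorem applied to each bounded graded piece. The paper phrases this by first constructing the maps element-free in $\Rep(GL_t)$ and then noting they agree with the ultraproducts of the finite-rank maps, which is exactly the same content as your direct ultraproduct construction together with the bookkeeping caveats you flag at the end.
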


\begin{proof}
The general strategy is the following. Prove that we have some maps by giving an element-free construction for them (this guarantees that our maps are ultraproducts of finite rank maps), which generalizes the finite rank case, and then apply Łoś's theorem to prove that these maps satisfy the required properties. We will spell out the proof of $a)$ in more detail to show how this strategy works.

\textbf{a)} First we define the map $\Delta':A \rightarrow A\otimes A$, using the collection of maps $ \Delta': V\otimes V^* \rightarrow A \otimes A$, which is equal to the sum of maps $T_i^{I}: V \otimes V^* \to Im T^I_i \otimes 1$,  $(T_{i-j}^I\otimes T^{II}_j)\circ coev: V \otimes V^* \to Im T^I_{i-j} \otimes T^{II}_j$ for $1 \le j \le i-1$ and $T^{II}_i: V\otimes V^* \rightarrow 1 \otimes Im T^{II}_i$ (This is just the formula $\Delta(T(u)) = T^{I}(u)T^{II}(u)$ written explicitly).

Now since $Y(\mathfrak{gl}_t)$ is the quotient of $A$, we have a map $A \rightarrow Y(\mathfrak{gl}_t) \otimes Y(\mathfrak{gl}_t)$. But we know that for integer rank Yangians this map factors through the relations $Q_{i,j}$, hence by Łoś's theorem it factors through them for rank $t$ (because "factors through" means that $\Delta'$ satisfies a collection of equations). Hence we have a map $\Delta$.

The map $S$ is constructed in the same way.  We explicitly define a map $A \rightarrow Y(\mathfrak{gl}_t)$ as in the finite rank case using $T(u)^{-1}$ and then argue that it gives us a map for $Y(\mathfrak{gl}_t)$ for the same reason.

Finally the fact that $\Delta$ and $S$ define a Hopf algebra structure on $Y(\mathfrak{gl}_t)$ is equivalent to saying that these maps satisfy some equations. But they satisfy these equations for integer rank, hence by Łoś's theorem they satisfy them for rank $t$.

\textbf{b)} We can consider a homomorphism $T(V\otimes V^*) \rightarrow Y(\mathfrak{gl}_t)$ given by  $T_1:V \otimes V^* \to Im T_1$. By the same argument as before this map factors through $U(\mathfrak{gl}_t)$.

\textbf{c)} The formula $T(u) \mapsto R(u)$ gives as an algebra homomorphism from $A$ to $U(\mathfrak{gl}_t)$. By the same logic it extends to  a map: $Y(\mathfrak{gl}_t) \rightarrow U(\mathfrak{gl}_t)$.
\end{proof}

Now, we can also define the Yangian of the special linear Lie algebra in complex rank. 
We know that in the finite rank case $Y(\mathfrak{sl}_{t_n})$ is defined to be the subalgebra of the Yangian for $\mathfrak{gl}_{t_n}$ invariant under automorphisms $T(u) \mapsto f(u)T(u)$ for $f \in 1 + u^{-1} \mathbb K[[u^{-1}]]$ (Def 2.1.2). 
To mimic this definition in the rank $t$ case, we need to understand how the ultraproduct of a collection of such automorphisms looks like:
\begin{lemma}
Consider a collection of $f^{(n)} \in 1 + u^{-1} \overline{\mathbb F}_{p_n}[[u^{-1}]]$. The ultraproduct of the automorphisms of the Yangians given by such a collection is the automorphism of $Y(\mathfrak{gl}_t)$ given by some $f \in 1 + u^{-1}\mathbb C[[u^{-1}]]$, denoted by $R_f$.
\end{lemma}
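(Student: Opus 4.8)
The plan is to form the ultraproduct automorphism $\Phi := \prod_{\mathcal F} R_{f^{(n)}}$ directly, check it is an automorphism, and then identify it explicitly on the generators $T_i$. For each $n$ write $f^{(n)}(u) = 1 + \sum_{k\ge 1} f^{(n)}_k u^{-k}$ with $f^{(n)}_k \in \overline{\mathbb F}_{p_n}$ (and $f^{(n)}_0 = 1$). By Prop.~2.1.2(e) — valid over $\overline{\mathbb F}_{p_n}$ by the remarks opening Section~3 — the map $T(u)\mapsto f^{(n)}(u)T(u)$ is an automorphism $R_{f^{(n)}}$ of $Y(\mathfrak{gl}_{t_n})$, with inverse $R_{g^{(n)}}$ where $g^{(n)}(u) = f^{(n)}(u)^{-1} \in 1 + u^{-1}\overline{\mathbb F}_{p_n}[[u^{-1}]]$. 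Since composition and identities in an ultraproduct are the ultraproducts of the corresponding data in the factors, the ultraproduct of a family of isomorphisms is an isomorphism; hence $\Phi$ is an automorphism of $\prod_{\mathcal F}Y(\mathfrak{gl}_{t_n}) = Y(\mathfrak{gl}_t)$, with inverse $\prod_{\mathcal F}R_{g^{(n)}}$.

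It remains to identify $\Phi$. On the generating morphisms, $R_{f^{(n)}}$ acts by $T_i \mapsto \sum_{k=0}^{i} f^{(n)}_{i-k}\,T_k$, where $T_0$ denotes the unit-part morphism $V\otimes V^*\to\mathbb C\hookrightarrow A$. For fixed $i$ the right-hand side is a linear combination of the \emph{finitely many} morphisms $T_0,\dots,T_i$ with the \emph{finitely many} coefficients $f^{(n)}_0,\dots,f^{(n)}_i$, and a finite sum commutes with the ultraproduct, so
$$
\Phi(T_i) \;=\; \prod_{\mathcal F}\Big(\sum_{k=0}^{i} f^{(n)}_{i-k}\,T_k\Big) \;=\; \sum_{k=0}^{i}\Big(\prod_{\mathcal F} f^{(n)}_{i-k}\Big)T_k \;=\; \sum_{k=0}^{i} f_{i-k}\,T_k ,
$$
where $f_k := \prod_{\mathcal F} f^{(n)}_k \in \mathbb C$. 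Put $f(u) := 1 + \sum_{k\ge 1} f_k u^{-k}$; it lies in $1 + u^{-1}\mathbb C[[u^{-1}]]$ since $f_0 = \prod_{\mathcal F}1 = 1$. The computation says precisely that $\Phi(T(u)) = f(u)T(u)$, and since the $T_i$ generate $Y(\mathfrak{gl}_t)$ as an algebra (the remark after Definition~4.1.1), an algebra endomorphism of $Y(\mathfrak{gl}_t)$ is determined by its values on them. Hence $\Phi$ is the map $R_f\colon T(u)\mapsto f(u)T(u)$; in particular the latter is a well-defined automorphism of $Y(\mathfrak{gl}_t)$, which is the assertion of the lemma.

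The only place where something could conceivably go wrong — and the point worth spelling out in the write-up — is the finiteness observation in the displayed computation: a priori the ultraproduct of the scalar-type automorphisms $R_{f^{(n)}}$ might have been an automorphism of $Y(\mathfrak{gl}_t)$ not of the form $R_f$ for any single scalar series $f$. This is ruled out precisely because the action on each homogeneous component $T_i$ depends only on the truncation $f^{(n)}_0,\dots,f^{(n)}_i$, so that passing to the ultraproduct degree by degree yields the honest series $f = \prod_{\mathcal F} f^{(n)}$ and nothing more exotic. Everything else is a routine application of Łoś's theorem, in the spirit of the proof of Prop.~4.1.2.
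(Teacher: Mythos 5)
Your proof is correct and follows essentially the same route as the paper: both identify the ultraproduct automorphism degree by degree on the generators $T_i$, observing that the action on each $T_i$ involves only the finitely many coefficients $f^{(n)}_0,\dots,f^{(n)}_i$, so the ultraproduct commutes with the finite sum and produces $f_k = \prod_{\mathcal F} f^{(n)}_k \in \mathbb C$ via the fixed isomorphism $\prod_{\mathcal F}\overline{\mathbb F}_{p_n}\simeq\mathbb C$. Your write-up is slightly more explicit about the invertibility of $\Phi$ and about why no ``more exotic'' automorphism could arise, but the core argument is the paper's.
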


\begin{proof}
Let $f^{(n)} = 1 + \sum f^{(n)}_i u^{-i}$. The automorphisms are given explicitly as $T_i \mapsto T_i + f^{(n)}_1T_{i-1} + \dots + f^{(n)}_i$. Now the important fact, which we are going to use here, is the isomorphism $\prod_{\mathcal F} \overline{\mathbb F}_{p_n} \simeq \mathbb C$. Since this isomorphism is fixed, it follows that the ultraproduct of a collection of maps as above with $f^{(n)}_i \in \overline{\mathbb F}_{p_n}$ will give us a similar map with $f_i \in \mathbb C$. Thus we obtain $f \in 1 + u^{-1}\mathbb C[[u^{-1}]]$  and automorphism $R_f$ sending $T(u)$ to $f(u)T(u)$.
\end{proof}

\begin{def0}
The Yangian $Y(\mathfrak{sl}_t)$ is defined as the subalgebra of $Y(\mathfrak{gl}_t)$ of invariant "elements" under all automorphisms $R_f$.
\end{def0}

\begin{rem}
 Here by "elements" of $Y(\mathfrak{gl}_t)$ we mean the following. Suppose $X$ is an object of a tensor category $\mathcal C$. Also suppose that we have an action of group $G$ on $X$, i.e. a homomorphism $G \to \Aut(X)$. Then we can define $X^G$ as a interesction $\cap_{g\in G} \ker(1-g)$. So if $G$ is a group of automorphisms $R_f$, then we define $Y(\mathfrak{sl}_t) = Y(\mathfrak{gl}_t)^G$.
\end{rem}

Now it is easy to see that $Y(\mathfrak{sl}_t)$ defined in such a way is the ultraproduct of $Y(\mathfrak{sl}_{t_n})$. Indeed, by  Lemma 4.1.3 $Y(\mathfrak{sl}_t)$ is the subalgebra in the ultraproduct of Yangians of invariants under all possible collections of automorphisms of $Y(\mathfrak{gl}_{t_n})$ given by $f^{(n)}$, hence the ultraproduct of $Y(\mathfrak{sl}_{t_n})$. Also we have the following corollary: 

\begin{cor}
\textbf{a)} The subalgebra $Y(\mathfrak{sl}_t)$ is a Hopf subalgebra of $Y(\mathfrak{gl}_t)$.

\textbf{b)} The map $i$ restricts to the map $i:U(\mathfrak{sl}_t) \rightarrow Y(\mathfrak{sl}_t)$.

\textbf{c)} We have $Y(\mathfrak{gl}_t) = Y(\mathfrak{sl}_t) \otimes \mathbb C[z_1,z_2,\dots ]$. There the second factor is the sum of trivial representations as an object of $\Rep(GL_t)$.
\end{cor}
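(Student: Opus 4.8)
The plan is to obtain all three assertions from their finite-rank (and positive-characteristic) counterparts by transfer, using the identifications $Y(\mathfrak{gl}_t)=\prod_{\mathcal F}Y(\mathfrak{gl}_{t_n})$ and $Y(\mathfrak{sl}_t)=\prod_{\mathcal F}Y(\mathfrak{sl}_{t_n})$ established just above, together with $U(\mathfrak{gl}_t)=\prod_{\mathcal F}U(\mathfrak{gl}_{t_n})$ and $U(\mathfrak{sl}_t)=\prod_{\mathcal F}U(\mathfrak{sl}_{t_n})$ (the latter because $\mathfrak{sl}_{t_n}=\ker(\mathrm{ev})$ and $\mathrm{ev}$ is an ultraproduct of evaluation maps). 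Since these are ind-objects, the transfer must be carried out piece by piece: write $Y(\mathfrak{gl}_t)=\varinjlim_{m,N}F_{m,N}$, where $F_{m,N}$ is the genuine dualizable subobject spanned by products of at most $m$ of the generators $T_i$ with $i\le N$, and is the ultraproduct of the corresponding finite-rank pieces. Each structural statement below, restricted to a fixed $F_{m,N}$, is a first-order sentence about finitely many such objects and the maps $\Delta$, $S$, $i$, $ev$ and multiplication, hence holds in rank $t$ as soon as it holds for almost all $n$; one then passes to the colimit over $(m,N)$.

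\textbf{Parts (a) and (b).} Part (a) is the pair of inclusions $\Delta(Y(\mathfrak{sl}_t))\subseteq Y(\mathfrak{sl}_t)\otimes Y(\mathfrak{sl}_t)$ and $S(Y(\mathfrak{sl}_t))\subseteq Y(\mathfrak{sl}_t)$ (the remaining Hopf-algebra axioms then being automatic), and part (b) is the inclusion $i(U(\mathfrak{sl}_t))\subseteq Y(\mathfrak{sl}_t)$ with $i$ as in Proposition 4.1.2(b). In each fixed $F_{m,N}$ these are first-order statements with the subobjects $Y(\mathfrak{sl}_{t_n})$ and the maps $i,\Delta,S$ as parameters — and $Y(\mathfrak{sl}_{t_n})$ is exactly the subobject whose ultraproduct is $Y(\mathfrak{sl}_t)$, by Lemma 4.1.3 and the remarks preceding this corollary. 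They hold in characteristic zero by Proposition 2.1.3(a)--(b), and their proofs go through verbatim in positive characteristic (they do not use the quantum determinant); so they hold for almost all $n$ and transfer.

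\textbf{Part (c).} In finite rank, multiplication is an algebra isomorphism $Y(\mathfrak{sl}_{t_n})\otimes Z_{HC}(Y(\mathfrak{gl}_{t_n}))\xrightarrow{\ \sim\ }Y(\mathfrak{gl}_{t_n})$, with $Z_{HC}$ the free polynomial algebra on the coefficients $d^{(n)}_1,d^{(n)}_2,\dots$ of $\mathrm{qdet}\,T(u)=1+\sum_{k\ge1}d^{(n)}_k u^{-k}$: this is Proposition 2.1.2(d) together with Proposition 2.1.3(c) in characteristic zero, and the theorem of Brundan--Kleshchev cited in Section 3 in characteristic $p$, valid once $p_n>t_n$ — a condition one may impose when choosing the sequences in Theorem 1.3.1(b), and which is vacuous for transcendental $t$. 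Each $d^{(n)}_k$ lies in $F_{k,k}$ uniformly in $n$ (it is a polynomial of degree $\le k$ in the $T_j$ with $j\le k$) and is $GL_{t_n}$-invariant, being central with $i(U(\mathfrak{gl}_{t_n}))\subseteq Y(\mathfrak{gl}_{t_n})$; hence $z_k:=\prod_{\mathcal F}d^{(n)}_k$ is a well-defined central element of $Y(\mathfrak{gl}_t)$ spanning a copy of $\mathbb C$. Transferring in each $F_{m,N}$ the statements that the multiplication map is an isomorphism onto the appropriate piece, that the $d^{(n)}_k$ are algebraically independent, and that they are invariant, and then taking the colimit, yields the algebra isomorphism $Y(\mathfrak{gl}_t)\cong Y(\mathfrak{sl}_t)\otimes\mathbb C[z_1,z_2,\dots]$, the second factor being $\bigoplus\mathbb C$ over the monomials in the $z_k$ as an object of $\Rep(GL_t)$.

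\textbf{Main obstacle.} The delicate point is the bookkeeping in (c): one has to be sure that the quantum determinant coefficients and the $Y(\mathfrak{sl}_n)\otimes Z_{HC}$ splitting respect a filtration with pieces of bounded size, so that Łoś's theorem can be applied piece by piece and the colimit reproduces exactly the countably-generated polynomial factor $\mathbb C[z_1,z_2,\dots]$ rather than a much larger ultraproduct of polynomial rings; and one must confirm that the primes can be taken with $p_n>t_n$, so that the Brundan--Kleshchev decomposition is available in the algebraic case.
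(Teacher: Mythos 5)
Your argument is correct and follows the same route as the paper: parts (a) and (c) are proved by transferring the finite-rank/positive-characteristic statements (including the Brundan–Kleshchev decomposition $Y(\mathfrak{gl}_n)=Y(\mathfrak{sl}_n)\otimes Z_{HC}$ for $p>n$, which is indeed available by Lemma 4.2.5) across the ultraproduct, and part (b) likewise transfers, whereas the paper instead notes directly that $R_f$ acts on $i(U(\mathfrak{gl}_t))$ as $\Id+f_1$ on $V\otimes V^*$ so the invariants are precisely $U(\mathfrak{sl}_t)$. The "main obstacle" you flag in (c) is real — the ultraproduct must be read piece by piece along the filtration of the ind-object so that the second factor comes out as $\mathbb C[z_1,z_2,\dots]$ rather than an ultraproduct of polynomial rings — and your explicit handling of it is a point where you are more careful than the paper, which treats this implicitly.
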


\begin{proof}
\textbf{a)}We know that this proposition holds for integer rank. Hence we know that the image of invariant elements under the maps $\Delta$ and $S$ is invariant for integer rank, hence it is invariant for complex rank by Łoś's theorem.

\textbf{b)} This is obvious since $R_f$ restricted to $U(\mathfrak{gl}_t)$ is just the automorphism which sends $V\otimes V^*$ into itself via $\Id + f_1$\footnote{Here by $f_1$ we mean $f_1$ times the projector $coev \circ ev$.}.  Hence the invariants in $U(\mathfrak{gl}_t)$ are exactly $U(\mathfrak{sl}_t)$.

\textbf{c)} From Prop. 2.1.3(c) and discussion in the beginning of section 3 it follows that for almost all $n$ we have $Y(\mathfrak{gl}_{t_n}) = Y(\mathfrak{sl}_{t_n})\otimes Z_{HC}(Y(\mathfrak{gl}_t))$, where $Z_{HC}(Y(\mathfrak{gl}_t)) = \overline{\mathbb F}_{p_n}[z_1,z_2,\dots, ]$. This decomposition respects the action of $\mathfrak{gl}_{t_n}$ on $Y(\mathfrak{gl}_{t_n})$. So it follows that $Y(\mathfrak{gl}_t) = \prod_{\mathcal F}Y(\mathfrak{gl}_{t_n}) = [\prod_{\mathcal F}Y(\mathfrak{sl}_{t_n})]\otimes [\prod_{\mathcal F} \overline{\mathbb F}_{p_n}[z_1,z_2,\dots ]] = Y(\mathfrak{sl}_t)\otimes \mathbb C[z_1,z_2,\dots]$. 
\end{proof}

\begin{rem}
 Note that Cor. 4.1.5(c) gives us an alternative way to define $Y(\mathfrak{sl}_{t})$, clarifying the concept of "invariant elements" mentioned before.
\end{rem}

\subsection{Finite-length representations of $Y(\mathfrak{gl}_t)$, $Y(\mathfrak{sl}_t)$.}

First we define the category of $Y(\mathfrak{gl}_t)$-modules which we are interested in.

\begin{def0}
Denote by $\Rep_0(Y(\mathfrak{gl}_t))$ the category with objects being objects $M \in \Rep (GL_t)$ together with an element $\mu_M \in \Hom(Y(\mathfrak{gl}_t)\otimes M,M)$ such that:

\textbf{a)} $M$ has finite length.

\textbf{b)} $M$ is a representation of $Y(\mathfrak{gl}_t)$, i.e. $\mu_M \circ (1 \otimes \mu_M) = \mu_M \circ (\mu \otimes 1)$ as elements of $\Hom(Y(\mathfrak{gl}_t)\otimes Y(\mathfrak{gl}_t)\otimes M,M)$, where $\mu$ is the product map of the Yangian.

\textbf{c)} The map $\mu_M \circ (i \otimes 1)$ gives the standard structure of a $\mathfrak{gl}_t$ representation on $M$.

The morphisms in $\Rep_0(Y(\mathfrak{gl}_t))$ are morphisms of $\Rep(GL_t)$ which commute with the representation structure.

Also we will denote by $\Rep_0'(Y(\mathfrak{gl}_t))$ we category defined in the similar way, but we require only the $\mathfrak{sl}_t$-action to be standard in $(c)$.

\end{def0}

Similarly one can define the category $\Rep_0(Y(\mathfrak{sl}_t))$(using Cor. 4.1.5). We have the following result connecting these categories to the categories of finite-dimensional representations of finite rank Yangians over $\overline{\mathbb F}_{p_n}$ denoted by $\textbf{Rep}_0(Y(\mathfrak{gl}_{t_n}),p_n)$ and $\textbf{Rep}_0(Y(\mathfrak{sl}_{t_n}),p_n)$:

\begin{lemma}
 The category $\Rep_0(Y(\mathfrak{gl}_t))$ is the full subcategory of $\prod_{\mathcal F} \textbf{Rep}_0(Y(\mathfrak{gl}_{t_n}),p_n)$ given by the objects whose image in $\prod_{\mathcal F} \textbf{\Rep}_{p_n}(GL_{t_n})$ is contained in $\Rep(GL_t)$. Moreover, the irreducible representations of $Y(\mathfrak{gl}_t)$ correspond to collections of representations of $Y(\mathfrak{gl}_{t_n})$, such that almost all of them are irreducible. So $\Irr(\Rep_0(Y(\mathfrak{gl}_{t_n}),p_n)) = \prod_{\mathcal F} \Irr(\textbf{Rep}_0(Y(\mathfrak{gl}_{t_n}),p_n))$.
\end{lemma}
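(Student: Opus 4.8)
The plan is to leverage the fact, established in the previous subsection, that $Y(\mathfrak{gl}_t) = \prod_{\mathcal F} Y(\mathfrak{gl}_{t_n})$ together with the ultraproduct description of $\Rep(GL_t)$ from Thm.~1.3.1(b), and then to verify that each of the three defining conditions (a)--(c) of $\Rep_0(Y(\mathfrak{gl}_t))$ is an ``ultraproduct-local'' condition, i.e.\ one to which \lthm applies. First I would set up the ambient ultraproduct category $\prod_{\mathcal F} \textbf{Rep}_0(Y(\mathfrak{gl}_{t_n}),p_n)$ and observe that an object of it is exactly a collection $\{(M_n,\mu_{M_n})\}$ where $M_n$ is a finite-dimensional $Y(\mathfrak{gl}_{t_n})$-module over $\overline{\mathbb F}_{p_n}$; the morphism spaces are the corresponding ultraproducts of $\Hom$-spaces, which by \lthm are precisely the morphisms of $\Rep(GL_t)$-objects (when the objects live in $\Rep(GL_t)$) commuting with the ultraproduct of the $\mu_{M_n}$'s.

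Next I would check the three conditions one at a time. For (b), the associativity identity $\mu_{M}\circ(1\otimes\mu_M) = \mu_M\circ(\mu\otimes 1)$ is an equality of morphisms in a $\Hom$-space, hence by \lthm holds for $M = \prod_{\mathcal F} M_n$ iff it holds for almost all $M_n$; so being a genuine $Y(\mathfrak{gl}_t)$-module is ultraproduct-local. For (c), the requirement that $\mu_M\circ(i\otimes 1)$ give the ``standard'' $\mathfrak{gl}_t$-action is again an equality of morphisms (comparing with the canonical action map on the $\Rep(GL_t)$-object), so it transfers. Condition (a), finite length, is the delicate one and I expect it to be the main obstacle: finite length is \emph{not} a first-order property and does not survive arbitrary ultraproducts (as the excerpt itself warns in Example 1.2.10). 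The resolution is that once $M$ lives in $\Rep(GL_t)$ — which is the extra hypothesis imposed in the statement — it is automatically of finite length, because $\Rep(GL_t)$ for $t\notin\mathbb Z$ is semisimple with objects being finite direct sums of the $V(\lambda)$; so I would argue that the finite-length condition on $M$ as a $Y(\mathfrak{gl}_t)$-module is subsumed by, or at least controlled by, its finite length as a $\Rep(GL_t)$-object, and note that a $Y(\mathfrak{gl}_{t_n})$-submodule is in particular a $\mathfrak{gl}_{t_n}$-submodule so the length of $M$ over the Yangian is bounded by its $\mathbb F_{p_n}$-dimension, which is bounded along $\mathcal F$ exactly when the image lies in $\Rep(GL_t)$.

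Having identified $\Rep_0(Y(\mathfrak{gl}_t))$ with the stated full subcategory, the ``moreover'' about irreducibles follows from the same principle: irreducibility of $M$ means it has no proper nonzero subobject compatible with $\mu_M$. In the ultraproduct setting, a subobject of $\prod_{\mathcal F} M_n$ is (since the $M_n$ have bounded dimension) an ultraproduct of subobjects $N_n \subseteq M_n$, and $N = \prod_{\mathcal F} N_n$ is $0$ (resp.\ all of $M$) iff $N_n$ is $0$ (resp.\ all of $M_n$) for almost all $n$; hence $M$ is irreducible iff $M_n$ is irreducible for almost all $n$. I would phrase this carefully using the bounded-dimension remark (Example 1.2.9) to justify that every subobject arises as such an ultraproduct, and then conclude the displayed identity $\Irr(\Rep_0(Y(\mathfrak{gl}_t))) = \prod_{\mathcal F}\Irr(\textbf{Rep}_0(Y(\mathfrak{gl}_{t_n}),p_n))$, with the analogous statement for $Y(\mathfrak{sl}_t)$ obtained verbatim via Cor.~4.1.5. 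The main technical care, as noted, goes into the finite-length point and into justifying that subobjects of a bounded-dimension ultraproduct module are themselves ultraproducts of submodules.
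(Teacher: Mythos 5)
Your proposal is correct and follows essentially the same route as the paper: use fullness of $\Rep(GL_t)$ inside $\prod_{\mathcal F}\textbf{Rep}_{p_n}(GL_{t_n})$, transfer the module axiom (b) and the standard-action condition (c) by \lthm, and reduce (ir)reducibility to the existence of a nonzero proper subobject, which by boundedness of dimension is an ultraproduct of $Y(\mathfrak{gl}_{t_n})$-submodules. The one thing you dwell on that the paper treats implicitly is condition (a): you are right that finite length is not first-order, but since the image is assumed to lie in $\Rep(GL_t)$ (which for $t\notin\mathbb Z$ is semisimple with every object a finite sum of $V(\lambda)$'s), finite length is automatic, exactly as you argue; the paper simply does not pause on this point.
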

\begin{proof}
Take $M \in \Rep_0(Y(\mathfrak{gl}_t))$.
As we know, $\Rep(GL_t)$ is a full subcategory of the category $\prod_{\mathcal F} \textbf{Rep}_{p_n}(GL_{t_n})$. Hence because $M$ is an object of $\Rep(GL_t)$, it follows that we have a corresponding collection of objects $M_n \in \textbf{Rep}_{p_n}(GL_{t_n})$. Now, since we have a map $\mu_M \in \Hom(Y(\mathfrak{gl}_t)\otimes M,M)$ it follows that we have a collection of maps $\mu_{M_n} \in \Hom(Y(\mathfrak{gl}_{t_n})\otimes M_n,M_n)$, and by \lthm it follows that almost all $\mu_{M_n}$ satisfy the equation $\mu_{M_n} \circ (1 \otimes \mu_{M_n}) = \mu_{M_n} \circ (\mu \otimes 1)$ giving a structure of a $Y(\mathfrak{gl}_{t_n})$-module to $M_n$. Also, since $\mu_M \circ (i \otimes 1)$ gives a standard structure of a $\mathfrak{gl}_t$-module to $M$, by \lthm, almost all $M_n$ have a standard structure of a $\mathfrak{gl}_{t_n}$-module from $\mu_{M_n} \circ (i \otimes 1)$. So objects of $\Rep_0(Y(\mathfrak{gl}_t))$ indeed correspond to some objects of $\prod_{\mathcal F} \textbf{Rep}_0(Y(\mathfrak{gl}_{t_n}))$. Now to show that this is indeed a full subcategory, we need to look at morphisms. But since $\Rep(GL_t)$ is a full subcategory in $\prod_{\mathcal F}\textbf{Rep}_{p_n}(GL_{t_n})$, it follows that each morphism in $\Rep_0(Y(\mathfrak{gl}_t))$ gives us a unique sequence of morphisms in $\textbf{Rep}_{p_n}(GL_{t_n})$ which commute with the $Y(\mathfrak{gl}_{t_n})$-action for almost all $n$ by Łoś's theorem. Hence this is indeed a full subcategory.

Now, in the other direction, if we have an object of the intersection of $\prod_{\mathcal F} \textbf{Rep}_0(Y(\mathfrak{gl}_{t_n}))$ with $\Rep (GL_t)$, i.e. a sequence of $M_n$ with a structure of $Y(\mathfrak{gl}_{t_n})$-modules s.t. $M=\prod_{\mathcal F}M_n$ lies in $\Rep(GL_t)$, it follows by \lthm that $M$ has a required structure of a $Y(\mathfrak{gl}_t)$-representation.

Let us prove the second statement. Suppose that $M$ is reducible, then we have a non-zero injective morphism from $N \ne M$ to $M$. Thus we have a sequence of $N_n$ and a sequence of maps $N_n \rightarrow M_n$ such that for almost all $n$ these maps are  $Y(\mathfrak{gl}_{t_n})$-module maps, $N_n \ne M_n$ and the maps are injective and non-zero. Hence almost all $M_n$ are reducible. And vice versa, if almost all $M_n$ are reducible, we have a collection of $Y(\mathfrak{gl}_{t_n})$-modules $N_n$ and a collection of injective maps $N_n \rightarrow M_n$. Since $N_n$ is a subobject of $M_n$ as objects of $\textbf{Rep}_{p_n}(GL_{t_n})$, it follows that $N = \prod_{\mathcal F}N_n$ lies in $\Rep(GL_t)$. Also by the above it lies in $\Rep_0(Y(\mathfrak{gl}_t))$. Finally, by \lthm the resulting map $N \rightarrow M$ is injective non-zero and $N\ne M$, hence $M$ is reducible.
\end{proof}

The same lemma can be stated for $Y(\mathfrak{sl}_t)$.
\begin{lemma}
 The category $\Rep_0(Y(\mathfrak{sl}_t))$ is a full subcategory of $\prod_{\mathcal F} \textbf{Rep}_0(Y(\mathfrak{sl}_{t_n}))$ given by the objects whose image in $\prod_{\mathcal F} \textbf{\Rep}_{p_n}(GL_{t_n})$ is contained in $\Rep(GL_t)$. Moreover the irreducible representations of $Y(\mathfrak{sl}_t)$ correspond to collections of representations of $Y(\mathfrak{sl}_{t_n})$, such that almost all of them are irreducible. 
\end{lemma}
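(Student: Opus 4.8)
The plan is to mirror the proof of Lemma 4.2.2 line by line, replacing $\mathfrak{gl}$ by $\mathfrak{sl}$ throughout and invoking Corollary 4.1.5 wherever the $\mathfrak{gl}_t$-construction was used in the finite-rank identifications. Concretely, I would first recall that $Y(\mathfrak{sl}_t) = \prod_{\mathcal F} Y(\mathfrak{sl}_{t_n})$ (established right after Definition 4.1.4 via Lemma 4.1.3), so that the same ultraproduct bookkeeping applies. Then, starting from an object $M \in \Rep_0(Y(\mathfrak{sl}_t))$, I would use that $\Rep(GL_t)$ is a full subcategory of $\prod_{\mathcal F} \textbf{Rep}_{p_n}(GL_{t_n})$ to produce a sequence $M_n \in \textbf{Rep}_{p_n}(GL_{t_n})$, lift the structure map $\mu_M \in \Hom(Y(\mathfrak{sl}_t)\otimes M, M)$ to a sequence $\mu_{M_n} \in \Hom(Y(\mathfrak{sl}_{t_n})\otimes M_n, M_n)$, and apply Łoś's theorem to conclude that almost all $\mu_{M_n}$ satisfy the module axioms and that almost all $M_n$ carry the standard $\mathfrak{sl}_{t_n}$-action (this last point uses Corollary 4.1.5(b), that $i$ restricts to $U(\mathfrak{sl}_t)\to Y(\mathfrak{sl}_t)$, so the ``standard $\mathfrak{sl}_t$-structure'' condition is again first-order expressible and transfers).

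Next I would check fullness exactly as before: a morphism in $\Rep_0(Y(\mathfrak{sl}_t))$ is in particular a morphism in $\Rep(GL_t)$, hence corresponds to a unique sequence of morphisms in $\textbf{Rep}_{p_n}(GL_{t_n})$, and by Łoś's theorem almost all of these commute with the $Y(\mathfrak{sl}_{t_n})$-action; conversely any such compatible sequence glues to a morphism of $Y(\mathfrak{sl}_t)$-modules. For the reverse inclusion, given a sequence $M_n$ of finite-dimensional $Y(\mathfrak{sl}_{t_n})$-modules with $M = \prod_{\mathcal F} M_n \in \Rep(GL_t)$, Łoś's theorem again endows $M$ with a $Y(\mathfrak{sl}_t)$-module structure satisfying conditions (a)--(c) of the $\mathfrak{sl}_t$-analogue of Definition 4.2.1. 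Finally, for the statement about irreducibles, I would repeat the argument of Lemma 4.2.2 verbatim: if $M$ is reducible there is a nonzero injective $N \hookrightarrow M$ with $N \ne M$, which descends to a sequence of proper nonzero submodule inclusions $N_n \hookrightarrow M_n$ for almost all $n$ by Łoś's theorem; conversely, if almost all $M_n$ are reducible, choosing proper nonzero submodules $N_n$ and forming $N = \prod_{\mathcal F} N_n$ gives an object of $\Rep(GL_t)$ — because each $N_n$ is a subobject of $M_n$ in $\textbf{Rep}_{p_n}(GL_{t_n})$ — which by the first part lies in $\Rep_0(Y(\mathfrak{sl}_t))$ and injects properly into $M$, so $M$ is reducible.

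I do not expect any genuine obstacle here: the only subtlety relative to the $\mathfrak{gl}_t$ case is making sure that the defining data of $Y(\mathfrak{sl}_t)$ — which is a subalgebra cut out by invariance under the automorphisms $R_f$ rather than given by an explicit presentation — is still an honest ultraproduct of the $Y(\mathfrak{sl}_{t_n})$, and this is precisely what Lemma 4.1.3 and the discussion following Definition 4.1.4 guarantee. Once that identification is in hand, every use of Łoś's theorem in the $\mathfrak{gl}$ proof goes through unchanged, since all the relevant conditions (being a module, having the standard $\mathfrak{sl}_t$-action via $i$, a map commuting with the action, a submodule inclusion being injective, nonzero, and proper) are first-order. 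The mildly delicate book-keeping point — worth a sentence in the write-up — is that ``$N = \prod_{\mathcal F} N_n$ lies in $\Rep(GL_t)$'' follows from $N_n$ being a subobject of $M_n$ together with $\Rep(GL_t)$ being closed under subobjects inside $\prod_{\mathcal F}\textbf{Rep}_{p_n}(GL_{t_n})$, exactly as in Lemma 4.2.2.
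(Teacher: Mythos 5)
Your proposal is correct and follows exactly the route the paper intends: the paper's own proof of this lemma is the single sentence ``The proof is exactly the same,'' i.e.\ repeat the argument of Lemma 4.2.2 with $\mathfrak{sl}$ in place of $\mathfrak{gl}$, using the identification $Y(\mathfrak{sl}_t)=\prod_{\mathcal F}Y(\mathfrak{sl}_{t_n})$ from the discussion after Definition 4.1.4. Your elaboration of why each step transfers (first-order expressibility via Łoś, Corollary 4.1.5(b) for the standard $\mathfrak{sl}_t$-structure, closure of $\Rep(GL_t)$ under subobjects) is accurate and adds nothing beyond what the paper implicitly relies on.
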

\begin{proof}
The proof is exactly the same.
\end{proof}

Recall the classification theorem of finite-dimensional irreducible  $Y(\mathfrak{sl}_n)$-modules. The important step in the proof of this classification is to show that all such modules are subquotients in the tensor product of evaluation modules (we will use $\sqsubset$ to denote subquotients). We want to prove the same thing in our case. To do this, first define evaluation modules. By $\mathbb C(c)$, for a complex number $c$, we will denote a representation of $\mathfrak{gl}_t$ where it acts through its projection to $\mathbb C$ and then by multiplication by $c$. 
\begin{def0}
For a fixed bipartition $\lambda$ and complex number $c$ denote by $L(\lambda+c)$ an object $\Rep'_0(Y(\mathfrak{gl}_t))$ which is equal to $V(\lambda)\otimes \mathbb C(c)$ as an object of $\Rep(GL_t)$ and the $Y(\mathfrak{gl}_t)$-module structure is obtained from $\mathfrak{gl}_t$-module structure by $ev: Y(\mathfrak{gl}_t) \rightarrow U(\mathfrak{gl}_t)$(see Prop. 4.1.2 (c)). We will call such modules evaluation modules.
\end{def0}

Since $Y(\mathfrak{gl}_t)$ is a Hopf algebra, we can freely take tensor products of such modules. Also the structure of a $Y(\mathfrak{gl}_t)$ representation gives them a structure of a $Y(\mathfrak{sl}_t)$ representation as well. 

We can now generalize the subquotient statement for complex rank.

We will need the following lemma:
\begin{lemma}
  Fix $c \in \mathbb Z$, then in the case of algebraic $t$ for almost all $n$ we have $p_n-t_n > c$.
\end{lemma}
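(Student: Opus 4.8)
The plan is to recall the arithmetic setup from Theorem 1.3.1(b) and derive the inequality from the fact that $t_n \to \infty$ while $p_n$ is constrained only through $q(t_n) \equiv 0 \pmod{p_n}$. First I would observe that it suffices to treat the algebraic case, and that by the construction in Theorem 1.3.1(b) we have a sequence of distinct primes $p_n$ and integers $t_n \to \infty$ with $q(t_n) = 0$ in $\mathbb{F}_{p_n}$. The key point is that if $p_n \le t_n$, then $t_n \bmod p_n$ is a \emph{different} integer from $t_n$, and this alternative root need not go to infinity; so one must be slightly careful about which representative of the residue class we use. The cleanest route: we are free to replace $t_n$ by \emph{any} integer in its residue class mod $p_n$, and the whole construction (and the statement $p_n - t_n > c$) only constrains the residue class together with the requirement $t_n \to \infty$. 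So I would argue that we may assume $t_n$ is chosen so that $0 \le t_n < p_n$ is false in a strong sense — rather, choose $t_n$ \emph{large} within its class; but then $t_n$ can exceed $p_n$, which breaks $p_n - t_n > c$.

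Let me instead take the opposite tack, which I believe is what is actually needed: choose the representative $t_n$ to be the \emph{minimal} nonnegative one, so $0 \le t_n < p_n$, hence $p_n - t_n \ge 1 > 0$, but we still need $t_n \to \infty$. The point is that in Harman's argument (reproduced in the proof of Theorem 1.3.1(b)) one shows there are infinitely many primes $p$ for which $q$ has a root mod $p$, and — crucially — one can arrange the \emph{roots themselves}, not merely the primes, to be unbounded; equivalently, for all but finitely many of these primes $p$, there is a root $t \bmod p$ with $t \ge$ any prescribed bound while still $t < p$. I would make this precise as follows: fix $c \in \mathbb{Z}$; since $q$ is nonconstant it has at most $\deg q$ roots mod $p$, and if for infinitely many admissible $p$ \emph{every} root of $q$ mod $p$ were $\le \max(c, \deg q)$, then (as in the counting argument in the proof of Theorem 1.3.1(b)) only finitely many integer values $q(0), q(1), \dots, q(\max(c,\deg q))$ would be divisible by those infinitely many distinct primes, which is absurd since a fixed nonzero integer has finitely many prime divisors. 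Hence for all but finitely many admissible primes $p$ there is a root $t$ of $q$ mod $p$ with $c < t < p$, i.e. $p - t > c$. Since $\mathcal{F}$ contains all cofinite sets, the sequence $(t_n, p_n)$ can be (and, going back to the construction, is) chosen so that $c < t_n < p_n$ for almost all $n$.

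Concretely, in the paper's bookkeeping this amounts to noting that the sequence $t_n$ in Theorem 1.3.1(b) may be normalized to satisfy $0 \le t_n < p_n$ (replace $t_n$ by its least nonnegative residue, which changes nothing since $\prod_{\mathcal{F}} t_n = t$ still holds after composing with an automorphism of $\mathbb{C}$, because the residue class mod $p_n$ is unchanged), and then the content of the lemma is precisely the unboundedness-of-roots statement above: for each fixed $c$, the set $\{n : t_n > c\}$ is cofinite (since $t_n \to \infty$), while $\{n : t_n < p_n\}$ is all of $\mathbb{N}$ after normalization, so $\{n : p_n - t_n > c\}$ is cofinite, hence in $\mathcal{F}$, which is what ``for almost all $n$'' means.

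The main obstacle, and the only real subtlety, is the interplay between the choice of integer representative $t_n$ of the root mod $p_n$ and the two competing requirements $t_n \to \infty$ and $t_n < p_n$; one has to go back into the proof of Theorem 1.3.1(b) and observe that the prime-counting argument there in fact produces roots that are simultaneously large and bounded by their modulus, rather than just producing large primes. Once that observation is in hand the rest is a one-line consequence of the cofiniteness of the relevant index set and the fact that $\mathcal{F}$ contains all cofinite subsets of $\mathbb{N}$; the transcendental case is vacuous, or trivial, since there $p_n = 0$ is not a meaningful modulus and the statement is interpreted (or simply not needed) accordingly, so I would state the lemma and its use only for algebraic $t$ as the paper does.
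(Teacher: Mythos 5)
Your overall plan---normalize $0 \le t_n < p_n$ and run a prime-counting argument---has the right shape, but the counting is aimed at the wrong set of representatives, and the final deduction does not follow. You show that for all but finitely many admissible primes $p$ not every root of $q$ modulo $p$ lies in $\{0,1,\dots,\max(c,\deg q)\}$, and conclude that some root $t$ satisfies $c < t < p$. That much is fine, but the clause ``i.e.\ $p - t > c$'' is a non sequitur: from $c < t < p$ you only get $0 < p - t < p - c$ (take $t = p-1$, which is bigger than $c$ for $p$ large, yet $p - t = 1$). The same slip reappears in your last paragraph, where you deduce that $\{n : p_n - t_n > c\}$ is cofinite from ``$\{n : t_n > c\}$ cofinite and $\{n : t_n < p_n\} = \mathbb N$''; but $t_n > c$ together with $t_n < p_n$ does not imply $p_n - t_n > c$.

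What the lemma actually needs is that $t_n$ cannot crowd up against $p_n$ from below, i.e.\ that $t_n - p_n \notin \{-c,\dots,-1\}$ for almost all $n$. Your counting argument works once it is aimed at those representatives: $t_n - p_n$ is also a root of $q$ modulo $p_n$, so if $0 < p_n - t_n \le c$ held for infinitely many $n$, one of the finitely many integers $q(-1),\dots,q(-c)$---each nonzero, since $q$ is the minimal polynomial of a non-integer and therefore has no integer root---would be divisible by infinitely many distinct primes, which is absurd. This is in substance the paper's proof: it writes $q(t_n - p_n) = A_n p_n$ with $A_n \ne 0$, bounds $|q(t_n - p_n)| < C c^{\deg q}$ once $|t_n - p_n| \le c$, and observes that for $p_n > C c^{\deg q}$ this forces $A_n = 0$, a contradiction. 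The paper compares sizes directly for each $n$ rather than invoking ``infinitely many primes,'' but both arguments hinge on evaluating $q$ at a bounded \emph{negative} integer; as written, yours evaluates it at a bounded nonnegative one, which answers a different question.
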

\begin{proof}
Suppose $q\in \mathbb Z[x]$ is a minimal polynomial for $t$ with positive leading coefficient. Now since $t_n$ is a root if $q$ modulo $p_n$, it follows that $t_n-p_n$ is also a root modulo $p_n$. So we have $q(t_n-p_n) = A_n p_n$ for some $A_n \in \mathbb Z$. Note that if $A_n = 0$, it follows that $q$ has an integer root, therefore is not minimal. Now we have $|q(x)| < Cx^k$ for $k = \deg \ q$ and $|x|>1$, where $C$ is the sum of absolute values of all coefficients plus 1. Pick $N$ such that $Cc^k < p_N$. Then for all $n \ge N$ if $p_n-t_n \le c$, it follows that $|q(t_n-p_n)| < Cc^k < p_n$, hence $A_n = 0$, which is a contradiction. Thus for all $n \ge N$ it follows that $p_n-t_n > c$.
\end{proof}

\begin{prop}
If $M \in \Rep_0(Y(\mathfrak{sl}_t))$ is irreducible, then there exist non-empty bipartitions $\eta_1, \dots, \eta_k$ and complex numbers $c_1,\dots,c_k$ such that M is a subquotient of $L(\eta_1+c_1)\otimes \dots \otimes L(\eta_k+c_k)$.
\end{prop}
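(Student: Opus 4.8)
The plan is to reduce the statement to the finite rank subquotient theorem via the ultraproduct description of Lemma 4.2.3; the real work is to build enough uniformity in $n$ so that one fixed tensor product of complex-rank evaluation modules works simultaneously for almost all $n$. I expect the main obstacle to be exactly that uniformity step: extracting from the explicit evaluation-module construction, rather than the bare classification, that boundedly many evaluation factors of eventually-constant shape and $\mathcal F$-convergent shift suffice, and matching correctly the "columns near $t_n$", i.e. $\Lambda^{t_n-b}$, with the $\circ$-type bipartitions $V((\emptyset,1^b))$ via a determinant twist.

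First I would invoke Lemma 4.2.3 to write $M = \prod_{\mathcal F} M_n$ with almost all $M_n$ irreducible finite-dimensional $Y(\mathfrak{sl}_{t_n})$-modules over $\overline{\mathbb F}_{p_n}$ (over $\overline{\mathbb Q}$, with $p_n=0$, in the transcendental case). Since $M$ is a fixed object of $\Rep(GL_t)$ there is a finite set $S$ of bipartitions with $M \cong \bigoplus_{\lambda \in S} V(\lambda)^{\oplus m_\lambda}$, hence $M_n \cong \bigoplus_{\lambda \in S} V(\lambda|_{t_n})^{\oplus m_\lambda}$ as $\mathfrak{gl}_{t_n}$-modules for almost all $n$. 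In particular all differences $(\lambda|_{t_n})_1-(\lambda|_{t_n})_2 = \lambda^\bullet_1-\lambda^\bullet_2$ and all $(\lambda|_{t_n})_i-(\lambda|_{t_n})_{i+1}$ are bounded by a constant $C=C(S)$ independent of $n$; the latter also bounds $\sum_i \deg P_i^{(n)}$, where the $P_i^{(n)}$ are the Drinfeld polynomials of $M_n$ (chosen of minimal degree, as in the Remark after Thm. 3.2.6), since $\deg P_i^{(n)}$ is a consecutive difference of the highest $\mathfrak{gl}_{t_n}$-weight $\mu_0|_{t_n}$ of $M_n$, for a fixed bipartition $\mu_0$. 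Applying Lemma 4.2.5 to $C$ gives $p_n > t_n + C$ for almost all $n$; so for those $n$ any $Y(\mathfrak{gl}_{t_n})$-lift $\widetilde M_n$ of $M_n$ (which exists since $Y(\mathfrak{gl}_{t_n}) = Y(\mathfrak{sl}_{t_n})\otimes Z_{HC}$, and whose $\mathfrak{gl}_{t_n}$-weights differ from those of $M_n$ only by a global shift) satisfies the hypothesis of Prop. 3.2.1 as well as the numerical bound $\sum_i \deg P_i^{(n)} + t_n < p_n$ used in Thm. 3.2.5, Thm. 3.2.6 and Cor. 3.2.7.

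Next, for such $n$, Cor. 3.2.7 — unwound through Thm. 3.2.5, Thm. 3.2.6 and the construction of Thm. 3.4.1 of \cite{molev2007yangians} — exhibits $\widetilde M_n$ as a subquotient of a tensor product of evaluation modules indexed by the roots of the $P_i^{(n)}$; restricting to $Y(\mathfrak{sl}_{t_n})$ gives a subquotient expression for $M_n$. The number $k$ of factors equals $\sum_i \deg P_i^{(n)} \le C$, hence is constant for $n$ in some set of $\mathcal F$; each factor is, up to a $\det$-twist (invisible after restriction to $Y(\mathfrak{sl}_{t_n})$, being an $R_f$-twist in the sense of Prop. 2.1.2(e)), the evaluation module on a fundamental $\Lambda^a(\overline{\mathbb F}_{p_n}^{t_n})$ with $a$ or $t_n-a$ bounded by $C$, i.e. $V((1^a,\emptyset))|_{t_n}$ or $V((\emptyset,1^b))|_{t_n}$ with $a,b\ge 1$ bounded, twisted by a shift in $\overline{\mathbb F}_{p_n}$. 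Since the top (resp. near-$t_n$) entries of $\mu_0|_{t_n}$ are eventually constant, the column lengths occurring are eventually constant, so after a consistent re-indexing we obtain fixed non-empty bipartitions $\eta_1,\dots,\eta_k$ and we set $c_j := \prod_{\mathcal F}(\text{shift of the }j\text{-th factor}) \in \prod_{\mathcal F}\overline{\mathbb F}_{p_n} = \mathbb C$.

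Finally, put $N_n := L(\eta_1|_{t_n}+c_1|_n)\otimes\cdots\otimes L(\eta_k|_{t_n}+c_k|_n)$ and $N := L(\eta_1+c_1)\otimes\cdots\otimes L(\eta_k+c_k)$ (evaluation modules in the sense of Def. 4.2.4). Each $L(\eta_j+c_j)=V(\eta_j)\otimes\mathbb C(c_j)$ lies in $\Rep(GL_t)$ and has finite length, hence so does $N$, and $N=\prod_{\mathcal F}N_n$, so $N \in \Rep_0(Y(\mathfrak{sl}_t))$. By the previous step $M_n \cong N'_n/N''_n$ with $N''_n\subseteq N'_n\subseteq N_n$ for almost all $n$. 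Since $t\notin\mathbb Z$ the category $\Rep(GL_t)$ is semisimple, so $N':=\prod_{\mathcal F}N'_n$ and $N'':=\prod_{\mathcal F}N''_n$ are direct summands of $N$ inside $\prod_{\mathcal F}\textbf{Rep}_{p_n}(GL_{t_n})$, hence objects of $\Rep(GL_t)$ and, by the argument of Lemma 4.2.2, of $\Rep_0(Y(\mathfrak{sl}_t))$; and by \lthm together with the fullness in Lemma 4.2.3 the maps $N''\hookrightarrow N'$ and $N'\twoheadrightarrow M$ are morphisms there. Hence $M$ is a subquotient of $N$. The transcendental case is identical and easier: one works over $\overline{\mathbb Q}$, uses the characteristic-zero subquotient statement (Thm. 2.2.2 and \cite{molev2007yangians}) in place of Cor. 3.2.7, and needs no characteristic conditions nor Lemma 4.2.5.
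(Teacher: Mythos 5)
Your proof is correct and follows essentially the same strategy as the paper: pass to the $\mathcal F$-tuple $(M_n)$ via Lemma 4.2.3, use Lemma 4.2.5 together with the bound coming from the fixed decomposition of $M$ in $\Rep(GL_t)$ to ensure Cor.~3.2.7 applies at almost every index, and then exploit the boundedness of shapes/shifts plus the ultrafilter property to extract one fixed tensor product of complex-rank evaluation modules. The only cosmetic difference is in how the fixed $\eta_j$ are extracted — you unwind Cor.~3.2.7 through Molev's construction to see the factors are column fundamentals $\Lambda^a$, whereas the paper keeps the factors as general dominant-weight evaluation modules and argues finiteness of the decompositions of the top weight $\mu|_{t_n}$ — but this is the same observation dressed differently.
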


\begin{proof}
Consider the collection of $Y(\mathfrak{sl}_{t_n})$-modules $M_n$ corresponding to $M$. As we know, for almost all $n$ these modules are irreducible.  $M$ as an object of $\Rep(GL_t)$ is equal to a finite sum of $V(\mu_j)$ -- the simple objects corresponding to bipartitions $\mu_j$. Thus for almost all $n$ we have $M_n = \oplus_{j=1}^{K} V((\mu_j)|_{t_n},p)$. Since for big enough $n$ we have $((\mu_j)|_{t_n})_1 - ((\mu_j)|_{t_n})_{t_n} = (\mu_j^\bullet)_1 - (\mu_f^\circ)_1$, it follows that this parameter does not depend on $n$. Hence by Lemma 4.2.5 it follows that for almost all $n$, $M_n$ satisfies the condition of Prop. 3.2.1, and hence by Cor. 3.2.7 we deduce that $M_n$ is a subquotient of evaluation modules: $M_n \sqsubset L(\lambda^{(n)}_1) \otimes \dots \otimes L(\lambda^{(n)}_{k_n})$, where $\lambda^{(n)}_i$ is a dominant weight. Note that this means that the unique highest weight vector of $M_n$ with respect to the $\mathfrak{sl}_n$ action (up to scaling) has weight $\lambda^{(n)}_1 + \dots + \lambda^{(n)}_{k_n}$.

Fix $L = \max(l(\mu_j))$. From now on we will consider only $n>|L|$, which is okay since all cofinite sets belong to $\mathcal F$. So for almost all $n$, $M_n$ as a representation of $\mathfrak{sl}_n$ is equal to $\bigoplus V(\mu_j|_{t_n})$, where $\mu_j|_{t_n}$ is a weight equal to $(\mu_j|_{t_n})_i = (\mu_j^\bullet)_i - (\mu_j^\circ)_{n+1-i}$. Since $M_n$ must have a unique highest weight vector, it follows that $\mu_j|_{t_n}$ must be one of them. But if $\mu_j|_{t_n}$ is the unique highest weight for some $n>|L|$, then it is the unique highest weight for all such $n$, since this property depends only on the regions where $\mu_j|_{t_n}$ is non-zero, which do not change. So denote by $\mu$ the bipartition corresponding to this unique highest weight. We have $\mu|_{t_n} = \lambda^{(n)}_1 + \dots + \lambda^{(n)}_{k_n}$.

For the next step note that each $\lambda^{(n)}_i$ can be represented as $\chi^{(n)}_i + d^{(n)}_i$, where $\chi^{(n)}$ is a partition with $\chi^{(n)}_n =0$. So we have $\mu|_{t_n} - \sum d_i^{(n)}= \eta^{(n)}_1+\dots + \eta^{(n)}_{k_n}$, where both the left and the right parts are partitions with $n$-th term being zero. Here wlog we assume that $\eta_i \ne 0$, since if it happens to be so, the corresponding evaluation module is trivial as a $Y(\mathfrak{sl}_n)$-module and hence we can ignore it. The left part is a partition with rows of lengths equal to lengths of the rows of $\mu^{\bullet}$ and $n-a_j$, where $a_j$ are lengths of the rows of $\mu^{\circ}$. Now each $\eta^{(n)}_{k_n}$ must be equal to the sum of several rows of this partition, hence overall there is a finite number of ways to choose  $\eta^{(n)}_j$. Thus for almost all $n$, $\eta^{(n)}_j$ correspond to the same rows of $\mu$. This means that for almost all $n$ we have $\eta^{(n)}_j= \eta_j|_{t_n} - (\eta_j|_{t_n})_n$ for a fixed bipartition $\eta_j$ corresponding to the same collection of rows of $\mu$. Hence for almost all $n$ we have:
$$
M_n \sqsubset L(\eta_1|_{t_n}+c_1^{(n)}) \otimes \dots \otimes L(\eta_k|_{t_n} + c_k^{(n)}) ,
$$
for some other $c^{(n)}_i \in \overline{\mathbb F}_{p_n}$. But  the ultraproduct of $\overline{\mathbb F}_{p_n}(c_i^{(n)})$ is equal to $\prod_{\mathcal F}\overline{\mathbb F}_{p_n}(c_i^{(n)}) = \mathbb C(c_i)$, for some $c_i \in \mathbb C$ and we can obtain $\mathbb C(c_i)$ with any $c_i$  in this way. Thus from the ultraproduct construction we have:
\begin{equation}
M \sqsubset L(\eta_1 + c_1) \otimes \dots \otimes L(\eta_k+c_k) .
\end{equation}
Note that since $\eta_1 + \dots + \eta_k = \mu$ it follows that $\sum c_k = 0$ and hence the action of $\mathfrak{gl}_t$ on the tensor product is standard.
\end{proof}

\begin{cor}
An irreducible module $M$ is completely determined by the collection of bipartitions $\lambda_i$ and complex numbers $c_i$.
\end{cor}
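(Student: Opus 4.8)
The plan is to identify $M$, in finite rank, as a specific irreducible subquotient of the evaluation tensor product, and then descend through the ultraproduct. By (the proof of) Prop. 4.2.6 the module $M$ corresponds to a sequence $M_n$ of $Y(\mathfrak{sl}_{t_n})$-modules, almost all irreducible, with $M_n \sqsubset \mathcal T_n := L(\lambda_1|_{t_n}+c_1^{(n)})\otimes\dots\otimes L(\lambda_k|_{t_n}+c_k^{(n)})$ (here the $\lambda_i$ are the $\eta_i$ of Prop. 4.2.6 and $c_i = \prod_{\mathcal F}c_i^{(n)}$), and moreover the unique $\mathfrak{sl}_{t_n}$-singular vector of $M_n$ has weight $\mu|_{t_n}$ with $\mu = \lambda_1+\dots+\lambda_k$. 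By Lemma 4.2.3 together with \lthm two irreducibles of $\Rep_0(Y(\mathfrak{sl}_t))$ are isomorphic iff their associated sequences are isomorphic for almost all $n$, so it suffices to prove that, for almost all $n$, the module $M_n$ is determined up to isomorphism by the data $(\lambda_i|_{t_n},c_i^{(n)})$.

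For this I would analyse the top of $\mathcal T_n$. For $n$ large, each $\lambda_i|_{t_n}$ is dominant, so $L(\lambda_i|_{t_n}+c_i^{(n)})$ is the evaluation module on the irreducible $\mathfrak{gl}_{t_n}$-module of highest weight $\lambda_i|_{t_n}$ twisted by $c_i^{(n)}$; all its weights are $\le \lambda_i|_{t_n}$ and the extreme weight space is one-dimensional. Hence $\mu|_{t_n} = \sum_i \lambda_i|_{t_n}$ is the maximal weight of $\mathcal T_n$ and its weight space is the line through $\zeta := \zeta_1\otimes\dots\otimes\zeta_k$, the tensor of highest-weight vectors. The vector $\zeta$ is Yangian-singular (the series $t_{ij}(u)$ with $i<j$ act locally nilpotently on evaluation modules, hence on $\mathcal T_n$, as in the proof of Prop. 3.2.1), and from $\Delta(t_{ii}(u)) = \sum_a t_{ia}(u)\otimes t_{ai}(u)$ together with the action of $t_{ii}(u)$ on each $\zeta_j$ under $ev$, the series $\nu^{(n)}_i(u)$ by which $t_{ii}(u)$ acts on $\zeta$ is $\prod_j (1 + w^{(n)}_{j,i}u^{-1})$, where $w^{(n)}_{j,i} = (\lambda_j|_{t_n})_i + c_j^{(n)}$; thus $\nu^{(n)}(u)$ is determined by $(\lambda_i|_{t_n},c_i^{(n)})$. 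Now if $L'$ is an irreducible subquotient of $\mathcal T_n$ whose weights include $\mu|_{t_n}$, then, $\mu|_{t_n}$ being maximal in all of $\mathcal T_n$, it occurs in $L'$ as a highest weight; the $\mu|_{t_n}$-weight space of $L'$ is a subquotient of the line through $\zeta$, the $t_{ii}(u)$ act on it by $\nu^{(n)}(u)$, and so by the finite-rank highest-weight theory of Section 3 (applicable for almost all $n$ since, by the proof of Prop. 4.2.6 using Lemma 4.2.5, $p_n$ exceeds the bound of Prop. 3.2.1) we get $L' \cong L(\nu^{(n)}(u),p_n)$. Since $M_n$ does contain the weight $\mu|_{t_n}$, it follows that $M_n \cong L(\nu^{(n)}(u),p_n)$, and this depends only on $(\lambda_i|_{t_n},c_i^{(n)})$.

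Taking the ultraproduct, $M = \prod_{\mathcal F}M_n$ is therefore determined by $\prod_{\mathcal F}(\lambda_i|_{t_n},c_i^{(n)}) = (\lambda_i,c_i)$, which is the assertion. The step I expect to be the main obstacle is the middle one: checking rigorously that attaining the maximal weight of $\mathcal T_n$ forces an irreducible subquotient to be exactly $L(\nu^{(n)}(u),p_n)$ --- this needs that $\zeta$ is genuinely Yangian-singular with Yangian weight $\nu^{(n)}(u)$, that the weight order used in Section 3 makes $\mu|_{t_n}$ the top weight of $\mathcal T_n$, and that the largeness of $p_n$ required by Prop. 3.2.1 is precisely what the proof of Prop. 4.2.6 already provides; the remaining steps are just \lthm and transcriptions of the finite-rank statements of Sections 2--3.
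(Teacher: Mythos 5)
Your proof is correct and follows essentially the same route as the paper: reduce to finite rank via the ultraproduct, and use the fact that $M_n$ is pinned down as the unique irreducible highest-weight subquotient of the tensor product of evaluation modules. The paper's proof is considerably terser and leaves implicit exactly the step you spell out --- that the top weight space of $\mathcal T_n$ is one-dimensional, Yangian-singular with weight $\nu^{(n)}(u)$ determined by the data, so $M_n \cong L(\nu^{(n)}(u),p_n)$ --- so your version is a faithful, more detailed rendering of the intended argument rather than a different approach.
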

\begin{proof}
Indeed, if two irreducible modules through the construction above are subquotients of the same tensor product of evaluation modules, it means that the corresponding evaluation modules for finite rank Yangians are isomorphic for almost all $n$, hence the modules themselves are isomorphic.
\end{proof}

Now we can generalize the parametrization of irreducible modules by highest-weight:
\begin{def0}
Consider the realization of $M$ as a subquotient of a tensor product of evaluation modules defined in (3). Then the highest weight of $M$ is a pair of sequences of elements of $\mathbb C[[u^{-1}]]$ equal to 
$$
\lambda^\bullet_i(u) = \prod_k(1 +[(\eta^\bullet_k)_i+c_k]u^{-1}) \textrm{ and } \lambda^\circ_i(u) = \prod_k(1 +[(\eta^\circ_k)_i - c_k]u^{-1})
$$
and an element $\lambda^m = \prod_k(1 +c_ku^{-1})$, defined up to a simultaneous multiplication with any $f \in \mathbb C[[u^{-1}]]$.
Also denote $l(\lambda) = \max \ l(\eta_i)$, $l(\lambda^\bullet) = \max \  l(\eta^\bullet_i)$, $l(\lambda^\circ) = \max \ l(\eta^\circ_i)$.
\end{def0}
\begin{rem}
 Since there is a finite number of $\eta_k$ it follows that sequences $\lambda^\bullet(u)$ and $\lambda^\circ(u)$ stabilize at  infinity for any $M$. Also note that for $i > l(\lambda^\bullet)$ it holds that $\lambda^\bullet_i = \lambda^m$ and that for $i > l(\lambda^\circ)$ it holds that $\lambda_i^\circ(u)= \lambda^m(-u)$.
\end{rem}
This is well-defined since if two pairs $\lambda$ and $\mu$ are highest weights of the same module, their restriction to finite rank with $n > |\lambda|$ equals to $(\lambda|_{t_n})_i = \lambda^\bullet_i(u)$ for $i \le |\lambda^\bullet|$, $(\lambda|_{t_n})_{n-i+1} = \lambda^\circ_i(-u)$ for $i \le |\lambda^\circ|$ and $(\lambda|_{t_n})_i = \lambda^m(u)$ for the rest, (Where the coefficients are also reduced to $\overline{\mathbb F}_{p_n}$ through a fixed isomorphism of ultraproducts), are the same up to multiplication by $f$ for almost all $n$, hence the same up to multiplication by the ultraproduct of $f$'s for complex rank.

We can also generalize the classification of irreducible modules by collections of Drinfeld polynomials. To do this, we will need the following lemma:
\begin{lemma}
  Suppose  $\lambda_1 = \prod_i (u+l^1_i + c_i)$ and $\lambda_2 = \prod_i(u+l^2_i + c_i)$, where $l_i$ are integers and $c_i \in \mathbb C$, satisfy $\frac{\lambda_1(u)}{\lambda_2(u)} = \frac{P(u+1)}{P(u)}$ for some monic $P(u)$. Suppose also that $p> \deg P(u) + 1$. Then for $\mu_1(u) = \prod_i(u+l^1_i + d_i)$ and $\mu_2(u) = \prod_i(u+l^2_i+d_i)$ there exists a monic polynomial $Q(u)$ of the same degree as $P$ s.t. $\frac{\mu_1(u)}{\mu_2(u)} = \frac{Q(u+1)}{Q(u)}$.
\end{lemma}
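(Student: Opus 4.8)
The plan is to factor $\lambda_1(u)/\lambda_2(u)$ into elementary ``string'' pieces, use the hypothesis to express $\deg P$ through the integers $l^1_i,l^2_i$ alone, and then write $Q$ down by the same recipe with the $c_i$ replaced by the $d_i$. First I would record $\frac{\lambda_1(u)}{\lambda_2(u)}=\prod_i\frac{u+l^1_i+c_i}{u+l^2_i+c_i}$ and, for each $i$, introduce the monic polynomial $P^{(i)}(u):=\prod_j(u+j+c_i)$ with $j$ running over the integers from $\min(l^1_i,l^2_i)$ to $\max(l^1_i,l^2_i)-1$; then $\frac{u+l^1_i+c_i}{u+l^2_i+c_i}=\left(\frac{P^{(i)}(u+1)}{P^{(i)}(u)}\right)^{\varepsilon_i}$ with $\varepsilon_i$ the sign of $l^1_i-l^2_i$. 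Setting $A=\prod_{\varepsilon_i=1}P^{(i)}$ and $B=\prod_{\varepsilon_i=-1}P^{(i)}$ (monic, with $\deg A-\deg B=\sum_i(l^1_i-l^2_i)$), the hypothesis $\frac{\lambda_1(u)}{\lambda_2(u)}=\frac{P(u+1)}{P(u)}$ turns into the assertion that $\Psi(u):=\frac{P(u)B(u)}{A(u)}$ is invariant under $u\mapsto u+1$. In every situation where this lemma is invoked the $l$'s are differences of parts of fixed partitions, so all $\varepsilon_i\ge 0$, i.e. $B=1$; I will use this to keep degrees under control, the general case being similar.

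Next I would exploit that a $1$-periodic rational function over a field of characteristic $0$ is constant, while in characteristic $p$ it is a rational function of $q_p(u)=u^p-u$: its zero and pole multisets are stable under $u\mapsto u+1$, hence unions of length-$p$ orbits, and $\prod_{k=0}^{p-1}(u-a-k)=q_p(u-a)$ (equivalently, by Artin--Schreier, a $1$-periodic polynomial lies in $k[q_p]$). Writing $\Psi=F/G$ in lowest terms, a nonconstant $\Psi$ would force $\deg F\ge p$ or $\deg G\ge p$; but $F\mid PB$ and $G\mid A$, so with $B=1$ one would get $\deg P\ge p$ or $\sum_i(l^1_i-l^2_i)=\deg A\ge p$, and both are excluded once $p>\deg P+1$ and $p>\sum_i(l^1_i-l^2_i)$ — the first being the stated hypothesis, the second automatic in the applications. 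Hence $\Psi\equiv 1$ (compare leading coefficients), so $A(u)=P(u)B(u)$; in particular $B\mid A$ and $\deg P=\deg A-\deg B=\sum_i(l^1_i-l^2_i)$, a number depending only on the $l^1_i,l^2_i$ and not on the $c_i$.

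Finally I would set $Q:=\tilde A/\tilde B$, the analogue of $A/B$ formed from the $d_i$ in place of the $c_i$; since $\tilde B\mid\tilde A$ — after splitting the index set into cosets of $\mathbb Z$ (performed once, for the $c_i$ and the $d_i$ simultaneously) this divisibility depends only on the integers $l^1_i-l^2_i$, exactly as for $B\mid A$ — the polynomial $Q$ is monic of degree $\deg\tilde A-\deg\tilde B=\sum_i(l^1_i-l^2_i)=\deg P$, and the same index-by-index factorization gives $\frac{\mu_1(u)}{\mu_2(u)}=\frac{\tilde A(u+1)\tilde B(u)}{\tilde A(u)\tilde B(u+1)}=\frac{Q(u+1)}{Q(u)}$, which is the claim. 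I expect the only genuinely delicate point to be the middle step — ruling out that the periodic factor $\Psi$ depends nontrivially on $q_p$, i.e. the ``wrap-around'' responsible for the non-uniqueness of Drinfeld polynomials in positive characteristic (cf. the remarks after Thm. 3.2.6) — and that is precisely what the hypothesis $p>\deg P+1$ (in the regime where $p$ also dominates $\sum_i(l^1_i-l^2_i)$) is there to prevent; the rest is formal bookkeeping with the string decomposition and uses nothing about Yangians.
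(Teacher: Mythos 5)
Your argument and the paper's do the same thing, read in opposite directions: the paper string-decomposes the zero divisor of $P$ and then matches the string endpoints $-r_j+1$, $-t_j$ against the zeros $-l^1_i-c_i$ and poles $-l^2_i-c_i$ of $\lambda_1/\lambda_2$, while you string-decompose $\lambda_1/\lambda_2$ factor by factor into $A(u+1)B(u)/A(u)B(u+1)$ and then show that the residual $1$-periodic factor $\Psi=PB/A$ is trivial. Both rest on the same characteristic-$p$ phenomenon (a $1$-periodic rational function is a rational function of $q_p$) and both arrive at $P=\prod_i P^{(i)}$ when $B=1$. Your version is arguably cleaner in that it isolates exactly where the prime bound is used: one must rule out a hidden $q_p$-factor in $\Psi$, and that is a degree bound.

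That said, you correctly flag a genuine gap, and it is present in the paper's proof too. The hypothesis $p>\deg P+1$ eliminates only $\deg F\ge p$ in your notation; it does not eliminate $\deg G\ge p$, i.e.\ $\deg A\ge p$, and the lemma \emph{as literally stated} is false without the extra bound. A minimal counterexample over $\overline{\mathbb F}_5$: take $k=5$ factors, $c_i=0$, $l^1_i=i$, $l^2_i=i-1$. Then $\lambda_1=\lambda_2=u^5-u$, so $P=1$ and $\deg P=0<p-1$, yet for generic $d_i$ one has $\mu_1/\mu_2\ne 1$ and no monic $Q$ of degree $0$ exists. In the paper's proof the same gap hides in the step ``we can assume $l^1_i+c_i=-r_j+1$ and $l^2_i+c_i=-t_j$'': when wrap-around produces a full $q_p$-orbit in the union of the per-index strings, the number of strings of $P$ is strictly smaller than the number of indices and no such matching exists. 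As you note, the application in Theorem 4.2.10 supplies the missing bound automatically, since there the $l$'s are differences of parts of a fixed collection of bipartitions and $p_n\to\infty$; your restriction to $B=1$ (all $l^1_i\ge l^2_i$) is also automatic there, since partition rows decrease. A clean fix on the paper's side would be to add $p>\sum_i|l^1_i-l^2_i|$ to the hypotheses of the lemma, and your write-up is the one that makes this visible.
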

\begin{proof}
 Let's group the roots of $P$ into a group of "strings" in the following way. Consider the divisor $D$ of zeroes of $P$. Pick a point $p_0$. Next find a point $p$ which lies in the same class in $\overline{\mathbb F}_p/\mathbb F_p$, such that it lies in $D$, but $p-1$ does not lie in $D$ (this is always possible since the number of zeroes is less than $p-1$). Now denote by $s_1$ the set $\{p,p+1,p+2,\dots,p+l\}$ for maximal $l$ such that $s_1 \subset D$. Now take $D - s_1$ and repeat. Then $D = \bigcup_i s_i$. It follows that if we denote by $r_i$ the minimal element in $s_i$ and by $t_i$ the maximal element, then $\frac{P(u+1)}{P(u)} = \prod \frac{u-s_i+1}{u-t_i}$. Now if we group $c_i$ in groups by their image in $\mathbb C/\mathbb Z$, it follows that after renumbering inside the groups (which changes nothing) we can assume that $l^1_i+c_i = -s_i+1$ and $l^2_i+c_i = -t_i$. Now if we move to $\mu$, it follows that we can just consider $Q$ defined by the divisor $D_i' = \bigcup (s_i -c_i +d_i)$, where if $s_i = \{ p_i, p_i + 1, \dots, p_i + l_i\}$, then $s_i -c_i+d_i = \{p_i-c_i+d_i, \dots, p_i-c_i+d_i+l_i\}$.
\end{proof}

We are ready to prove the main classification theorem:

\begin{thm}
For every irreducible $M\in \Rep_0(Y(\mathfrak{sl}_t))$ there exists a pair of sequences of monic polynomials in $\mathbb C[u]$, denoted by $P(u) =(P^\bullet_i(u), P^\circ_i(u))$ , such that the corresponding highest-weight satisfies:
\begin{equation}
\frac{\lambda^\bullet_i(u)}{\lambda^\bullet_{i+1}(u)} = \frac{P^\bullet_i(u+1)}{P^\bullet_i(u)} \ , \ \  \frac{\lambda^\circ_i(-u)}{\lambda_{i-1}^\circ(-u)}=\frac{P^\circ_{i-1}(u+1)}{P^\circ_{i-1}(u)} \ ,
\end{equation}
and both sequences of polynomials stabilize and equal to $1$ for sufficiently large $i$.
 Moreover, for any such $P(u)$, there is $M$ with highest weight satisfying (4). 
 This also gives a $1-1$ correspondence between irreducible modules and sequences of  polynomials $P(u)$.

\end{thm}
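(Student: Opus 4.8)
The plan is to deduce the theorem from the ultraproduct description of $\Rep_0(Y(\mathfrak{sl}_t))$ in Lemma 4.2.3, which turns each assertion into a consequence of the positive-characteristic classification assembled in Section 3 (Thm. 3.2.5, Thm. 3.2.6, Cor. 3.2.9) together with \lthm. I would organize the argument in three steps: (i) for a given irreducible $M$, produce monic polynomials $P(u)$ satisfying (4); (ii) for a given admissible $P(u)$, produce an irreducible $M$ realizing it; (iii) check that $M\mapsto P(u)$ is well defined and bijective.

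For step (i), write $M=\prod_{\mathcal F}M_n$ with almost all $M_n$ irreducible, and fix via Prop. 4.2.6 a realization $M\sqsubset L(\eta_1+c_1)\otimes\dots\otimes L(\eta_k+c_k)$, so that by \lthm one has $M_n\sqsubset L(\eta_1|_{t_n}+c_1^{(n)})\otimes\dots\otimes L(\eta_k|_{t_n}+c_k^{(n)})$ for almost all $n$. The relevant $\mathfrak{sl}_{t_n}$-weight differences here are bounded by a constant depending only on the fixed $\eta_j$, so by Lemma 4.2.5 almost all $M_n$ satisfy the hypotheses of Prop. 3.2.1, and Thm. 3.2.5 then supplies Drinfeld polynomials $P_i^{(n)}$ with $\lambda_i^{(n)}(u)/\lambda_{i+1}^{(n)}(u)=P_i^{(n)}(u+1)/P_i^{(n)}(u)$. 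I would normalize $P_i^{(n)}$ as the explicit product of "strings" from the proof of Thm. 3.2.4, so that its roots are integers lying in a range determined only by the $\eta_j$ (hence uniform in $n$) shifted by the $c_k^{(n)}$; Lemma 4.2.9 is what makes this normalization legitimate, since it shows the string shape is dictated by the integer data and not by the continuous parameters. Then $\deg P_i^{(n)}$ is bounded and the coefficients have well-defined ultraproducts, so $P_i:=\prod_{\mathcal F}P_i^{(n)}\in\mathbb C[u]$ is monic; applying \lthm to (4), which holds for each $M_n$, gives (4) for $M$, and the finiteness of the $\eta_j$ forces all but finitely many $P_i$ to be $1$.

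For steps (ii) and (iii), given admissible $(P_i^\bullet,P_i^\circ)$, reduce the coefficients through the fixed isomorphism $\mathbb C\simeq\prod_{\mathcal F}\overline{\mathbb F}_{p_n}$ to monic $P_i^{(n)}\in\overline{\mathbb F}_{p_n}[u]$; since $\sum_i\deg P_i$ is a fixed integer, Lemma 4.2.5 yields $\sum_i\deg P_i+t_n<p_n$ for almost all $n$, so Thm. 3.2.6 and the remark following it produce, for almost all $n$, an irreducible $Y(\mathfrak{sl}_{t_n})$-module $M_n$ with Drinfeld polynomials $P_i^{(n)}$ and satisfying the hypotheses of Prop. 3.2.1. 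The delicate point is that $M=\prod_{\mathcal F}M_n$ lies in $\Rep(GL_t)$: by the proof of Thm. 3.2.6, $M_n$ is a subquotient of a tensor product of boundedly many evaluation modules $V(\nu|_{t_n},p_n)$ with $|\nu|$ bounded uniformly in $n$, and for $p_n$ large these decompose as $\mathfrak{gl}_{t_n}$-modules exactly as in characteristic zero into a bounded number of irreducibles $V(\sigma|_{t_n},p_n)$ (using Lemma 3.1.2); hence $M$ is a finite direct sum of objects $V(\sigma)$, lies in $\Rep(GL_t)$, belongs to $\Rep_0(Y(\mathfrak{sl}_t))$ by Lemma 4.2.3, is irreducible by Lemma 4.2.2, and has Drinfeld polynomials $P_i$ by step (i) and \lthm. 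Finally, $M\mapsto P(u)$ is well defined because a monic polynomial is determined by the rational function $P(u+1)/P(u)$ — two monic solutions have constant quotient, hence coincide — while the ratios in (4) are genuine invariants of $M$, being unaffected by the $f(u)$-ambiguity of Def. 4.2.8; surjectivity is step (ii); and injectivity follows since two irreducibles with the same $P(u)$ have $M_n\cong M'_n$ for almost all $n$ by Cor. 3.2.9 (the reduced Drinfeld polynomials agree), hence $M\cong M'$ by Lemma 4.2.2.

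The main obstacle is the uniform control in step (ii) of the $\mathfrak{gl}_{t_n}$-module structure of the constructed $M_n$, so that their ultraproduct stays inside $\Rep(GL_t)$: this forces one to know that evaluation modules and their tensor products decompose in characteristic $p_n$ as they do over $\mathbb C$ once $p_n$ is large (again via Lemma 4.2.5 and Lemma 3.1.2), and, throughout, one must track the non-uniqueness of Drinfeld polynomials in characteristic $p$ — this is exactly where Lemma 4.2.9 and the string decomposition are needed to make the passages to and from the ultraproduct canonical.
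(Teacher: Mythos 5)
Your proof is correct and follows essentially the same strategy as the paper: use the ultraproduct description from Lemma 4.2.3 to reduce to irreducible $Y(\mathfrak{sl}_{t_n})$-modules in characteristic $p_n$ satisfying the hypotheses of Prop. 3.2.1 (via Lemma 4.2.5), apply Thm. 3.2.5/3.2.6, normalize the mod-$p$ Drinfeld polynomials canonically so their degrees are $n$-independent (Lemma 4.2.9), and transfer back with Łoś. The only place where your route diverges from the paper's exposition is the converse direction: the paper writes down an explicit highest weight $\mu(u)$ in formulas (5)--(7), restricts it to each $t_n$, and observes that the resulting $L(\mu|_{t_n}(u),p_n)$ are subquotients of a fixed tensor product so that a single $\mathfrak{sl}_{t_n}$-type occurs for almost all $n$; you instead invoke Thm.~3.2.6 to produce the $M_n$ abstractly and then argue separately, via the bounded tensor-product realization and Lemma~3.1.2, that their $\mathfrak{gl}_{t_n}$-decompositions stabilize so that the ultraproduct lands inside $\Rep(GL_t)$. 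Both arguments amount to the same thing -- your version unpacks what the explicit formula buys -- and your handling of well-definedness (ratios in (4) are $f(u)$-invariant, and a monic $P$ over $\mathbb{C}$ is determined by $P(u+1)/P(u)$) matches the paper's once one adds the remark that over $\overline{\mathbb F}_{p_n}$ the polynomials are only unique modulo $q_{p_n}(u+c)$, which is precisely what the string normalization you cite resolves.
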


\begin{def0}
We will call the polynomials belonging to the sequence $P(u)$ as above Drinfeld polynomials.
\end{def0}

\begin{proof}

For the first part, consider the corresponding sequence of $M_n \in \Rep_0(Y(\mathfrak{sl}_n))$ almost all of which are irreducible with highest weight equal to $\lambda|_{t_n}(u)$, where $\lambda(u)$ is the highest weight of $M$. Now, for almost all $n$, we have $\frac{(\lambda|_{t_n})_i(u)}{(\lambda|_{t_n})_{i+1}(u)} = \frac{P^{(n)}_i(u+1)}{P^{(n)}_i(u)}$. For $n$ big enough (bigger than $l(\lambda)$) this means $\frac{(\lambda^\bullet_i)|_{t_n}(u)}{(\lambda^\bullet_{i+1})|_{t_n}(u)} = \frac{P^{(n)}_i(u+1)}{P^{(n)}_i(u)}$ and $\frac{(\lambda^\circ_{i+1})|_{t_n}(-u)}{(\lambda^\circ_{i})|_{t_n}(-u)} = \frac{P^{(n)}_{n-i}(u+1)}{P^{(n)}_{n-i}(u)}$, for some $P^{(n)}_i \in \overline{\mathbb F}_{p_n}[u]$. Also note that for big enough $n$  for $i \ge |\lambda^\bullet|$ and $i \le n - 1-|\lambda^\circ|$ we have $P^{(n)}_i=1$. From Lemma 4.2.9 it follows that all $P^{(n)}_i$ have the same degree not depending on $n$ for almost all $n$, since $\lambda|_{t_n}$ satisfy the condition.

Introduce the notation  $P^\bullet_i = \prod_{\mathcal F} P^{(n)}_i$, which are well defined since the degree is constant. Also set $P^\circ_i = \prod_{\mathcal F} P^{(n)}_{n-i}$. Now, by \lthm, it follows that these ultraproducts satisfy the required equations, since $\lambda^\bullet_i $ and $\lambda^\circ_i$ are also ultraproducts. And the equation holds for almost all $n$.

Fix a pair $P(u)$. Denote by $n_{\bullet}$ the number of polynomials in $P^\bullet$ and by $n_{\circ}$ the same for $P^\circ$. To prove the converse statement, consider the highest weight:
\begin{align}
& \mu_i^\bullet(u) = u^{-k}P^\bullet_1(u) \dots P^\bullet_{i-1}(u)P^\bullet_{i}(u+1) \dots P^\bullet_{n_\bullet}(u+1)P^\circ_{n_\circ}(u+1) \dots P^\circ_1(u+1) \\
& \mu^m (u)= u^{-k}P^\bullet_1(u)P^\bullet_{n_\bullet}(u)P^\circ_{n_\circ}(u+1) \dots P^\circ_1(u+1) \\
& \mu_i^\circ(-u) = u^{-k}P^\bullet_1(u)P^\bullet_{n_\bullet}(u)P^\circ_{n_\circ}(u) \dots P^\circ_{i}(u)P^\circ_{i-1}(u+1) \dots \dots P^\circ_1(u+1) \ .
\end{align} 
This highest weight obviously satisfies  (4). Also, the corresponding irreducible finite rank modules are finite-dimensional, and since as $\mathfrak{sl}_{t_n}$-modules they are subquotients of a fixed tensor product of simple modules, it follows that there is a finite number of possibilities for their structure as an $\mathfrak{sl}_{t_n}$-module, so for almost all $n$ they are the same, and the ultraproduct of these modules is well defined. Hence we have an irreducible $M \in \Rep_0(Y(\mathfrak{sl}_t)) $ with this highest weight. (Here we also use Lemma 4.2.5 and Thm 3.2.3)

Let us prove the last statement. Since the highest weight of $M$ is unique up to multiplication by $f$, it follows that the ratios in (4) are determined uniquely for a fixed $M$, hence the Drinfeld polynomials are determined uniquely. Indeed this is obvious for transcendental $t$ and for algebraic $t$ we just need to take the Polynomials which are not divisible by any $q_{p_n}(u+c)$ for almost all $n$, since any other choice won't lead us to a well defined ultraproduct (the degrees would increase to infinity). If two irreducible modules $M$ and $N$ have the same Drinfeld polynomials, it means that the corresponding $M_n$ and $N_n$ have the same Drinfeld polynomials and hence are isomorphic for almost all $n$, thus $M\simeq N$.
\end{proof}

To finish, let's discuss the classification of $Y(\mathfrak{gl}_t)$-modules. Since we already know that $Y(\mathfrak{gl}_t) = Y(\mathfrak{sl}_t)\otimes \mathbb C[z_1,z_2,\dots]$ by Cor. 4.1.5(c), it follows that to fix an irreducible representation of $Y(\mathfrak{gl}_t)$ it is enough to fix an irreducible representation of $Y(\mathfrak{sl}_t)$ and a series of complex numbers. Now two such representations corresponding to the same irreducible representations of $Y(\mathfrak{sl}_t)$ differ by multiplication by a one-dimensional representation of $Y(\mathfrak{gl}_t)$. The only difference is that $f(u)$ defining the structure of $Y(\mathfrak{gl}_t)$ representation on $\mathbb C$ should be of the form $f(u) = 1 + u^{-2}\mathbb C[u]$, since we want the action of $\mathfrak{gl}_t$ on our modules to be standard. So we have the following corollary: 
\begin{cor}
Irreducible objects of $\Rep_0(Y(\mathfrak{gl_t}))$ are in $1-1$ correspondence with tuples $(P(u),f(u))$, where $P(u)$ is a pair of sequences of Drinfeld polynomials and $f(u) \in 1 + u^{-2}\mathbb C[[u^{-1}]]$.
\end{cor}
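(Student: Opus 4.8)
The plan is to bootstrap from the classification of irreducible $Y(\mathfrak{sl}_t)$-modules in Thm.~4.2.10 by means of the tensor decomposition $Y(\mathfrak{gl}_t)=Y(\mathfrak{sl}_t)\otimes\mathbb C[z_1,z_2,\dots]$ of Cor.~4.1.5(c). First I would record that $\mathbb C[z_1,z_2,\dots]$ is central in $Y(\mathfrak{gl}_t)$: the coefficients of $\mathrm{qdet}\,T(u)$ are central in each $Y(\mathfrak{gl}_{t_n})$ over $\overline{\mathbb F}_{p_n}$ (a polynomial identity with integer coefficients, valid mod $p_n$), so centrality transfers by \lthm. Given an irreducible $M\in\Rep_0(Y(\mathfrak{gl}_t))$, write $M=\prod_{\mathcal F}M_n$ with almost all $M_n$ finite-dimensional irreducible $Y(\mathfrak{gl}_{t_n})$-modules; by Schur's lemma each $z_i$ acts on $M_n$ by a scalar, hence by \lthm on $M$ by a scalar, so the $\mathbb C[z_\bullet]$-action on $M$ is a character and, using $Y(\mathfrak{gl}_{t_n})=Y(\mathfrak{sl}_{t_n})\otimes Z_{HC}$ together with Lemma~4.2.3, the restriction $M'$ of $M$ to $Y(\mathfrak{sl}_t)$ is again irreducible in $\Rep_0(Y(\mathfrak{sl}_t))$. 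By Thm.~4.2.10 it is tagged by a unique pair of sequences of Drinfeld polynomials $P(u)$, so $M\mapsto M'\mapsto P$ is well defined; it remains to analyse the fibre of $M\mapsto M'$ and identify it with $1+u^{-2}\mathbb C[[u^{-1}]]$.

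Next I would study this fibre. Every one-dimensional $Y(\mathfrak{gl}_t)$-module is of the form $\mathbb C_f$ with $T(u)\mapsto f(u)\,\Id$ for some $f\in 1+u^{-1}\mathbb C[[u^{-1}]]$ (a character must send each $T_i$ into the line $\mathbb C\cdot\mathrm{ev}\subset\Hom(V\otimes V^*,\mathbb C)$, and then the relations $Q_{i,j}$ hold automatically since scalars commute with $R(u)$), and $\mathbb C_f$ restricts to the trivial $Y(\mathfrak{sl}_t)$-module because $Y(\mathfrak{sl}_t)$ is fixed pointwise by the automorphism $R_f$ of Lemma~4.1.3; hence tensoring with $\mathbb C_f$ does not alter $M'$ or $P$. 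By $Y(\mathfrak{gl}_t)=Y(\mathfrak{sl}_t)\otimes\mathbb C[z_\bullet]$, two irreducibles in $\Rep_0(Y(\mathfrak{gl}_t))$ with isomorphic $Y(\mathfrak{sl}_t)$-restriction differ only in their $\mathbb C[z_\bullet]$-character, hence differ by tensoring with some $\mathbb C_f$; and whether such a twist stays in $\Rep_0$ (rather than only $\Rep'_0$) is controlled by the fact that $z_1$, the $u^{-1}$-coefficient of $\mathrm{qdet}\,T(u)$, equals the image $i(\mathrm{id}_{\mathfrak{gl}_t})$ of the identity of $\mathfrak{gl}_t$ (by \lthm from the corresponding finite-rank identity): since $z_1$ then acts on $\mathbb C_f$ by $f_1t$ while the standard $\mathfrak{gl}_t$-action on the trivial object is zero, we get $f_1=0$ (as $t\neq 0$), i.e.\ $f\in 1+u^{-2}\mathbb C[[u^{-1}]]$, and conversely any such $\mathbb C_f$ carries the zero ($=$ standard) $\mathfrak{gl}_t$-action, so $M\otimes\mathbb C_f$ stays in $\Rep_0$. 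Finally, a basepoint of the fibre over $M'$ is provided by the realization of $M'$ in Prop.~4.2.6 as a subquotient of $L(\eta_1+c_1)\otimes\cdots\otimes L(\eta_k+c_k)$ with $\sum_j c_j=0$: that tensor product is a $Y(\mathfrak{gl}_t)$-module of pure degree $\sum_j\deg\eta_j$ carrying the standard $\mathfrak{gl}_t$-action, so it lies in $\Rep_0$, and since the $z_i$ act on it by scalars the $Y(\mathfrak{sl}_t)$-subquotient isomorphic to $M'$ is automatically a $Y(\mathfrak{gl}_t)$-subquotient in $\Rep_0$.

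Combining these steps, the fibre over each $M'$ is exactly $\{M'_P\otimes\mathbb C_f:f\in 1+u^{-2}\mathbb C[[u^{-1}]]\}$ for the chosen basepoint $M'_P$, which gives surjectivity of $(P,f)\mapsto M'_P\otimes\mathbb C_f$; for injectivity, an isomorphism $M'_P\otimes\mathbb C_f\cong M'_{P'}\otimes\mathbb C_{f'}$ yields $M'_P\cong M'_{P'}$ upon restriction, hence $P=P'$ by Thm.~4.2.10, and then $M'_P\cong M'_P\otimes\mathbb C_{f'f^{-1}}$, whence $f=f'$ because $\mathbb C_g$ has a nontrivial $\mathbb C[z_\bullet]$-character whenever $g\neq 1$ (in rank $t_n$ the series $\mathrm{qdet}(g^{(n)}(u)\,\Id)=g^{(n)}(u)^{t_n}$ is non-constant once $g^{(n)}\neq 1$, since $t_n\not\equiv 0$; pass to the ultraproduct). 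I expect the \textbf{main obstacle} to be precisely this last layer of bookkeeping: maintaining a clean dictionary between $\mathbb C[z_\bullet]$-characters and the series $f$, verifying that the standard $\mathfrak{gl}_t$-action pins down exactly the $u^{-1}$-coefficient of the relevant series (equivalently, the scalar by which $z_1$ acts), and checking that the fibre is a \emph{free} torsor under one-dimensional twists. The uniform way to secure each of these points is to reduce it to the corresponding assertion in finite rank $t_n$ and invoke \lthm.
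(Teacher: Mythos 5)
Your proposal follows the same route the paper sketches: reduce to the $Y(\mathfrak{sl}_t)$-classification via the decomposition $Y(\mathfrak{gl}_t)=Y(\mathfrak{sl}_t)\otimes\mathbb C[z_1,z_2,\dots]$ of Cor.~4.1.5(c), identify the fibre over a fixed $Y(\mathfrak{sl}_t)$-irreducible with one-dimensional twists $\mathbb C_f$, and impose standardness of the $\mathfrak{gl}_t$-action to force $f_1=0$. One small slip: $\operatorname{qdet}(g(u)\,\mathrm{Id})=\prod_{k=0}^{t_n-1}g(u-k)$, not $g(u)^{t_n}$; your injectivity conclusion still holds since the $u^{-j}$-coefficient of that product is $t_n g_j$ plus terms in $g_1,\dots,g_{j-1}$, and $t_n\not\equiv 0\pmod{p_n}$ for almost all $n$ because $q(0)\neq 0$.
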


\begin{rem}
 If we are interested in irreducible objects of $\Rep_0'(Y(\mathfrak{gl_t}))$, we should drop the requirement of linear term in $f(u)$ to be zero and let $f(u)$ run over $1 + u^{-1}\mathbb C[[u^{-1}]]$.
\end{rem}

\begin{rem}
 In a similar way one can define the Yangians and twisted Yangians of other classical Lie algebras\footnote{For defenitions of Yangians for over classical algebras see \cite{chari1995guide}, for definition if twisted Yangians see \cite{molev2007yangians}} in complex rank(see section 7 in \cite{etingof2016representation}). One can also extend in a similar manner  the classification theorems of irreducible finite dimensional representations of these algebras to complex rank. The new $RTT$-presentation for Yangians $Y(\mathfrak{sp}_n)$ and $Y(\mathfrak{so_n})$ appearing in \cite{guay2017representations} might be very useful for this purpose.
\end{rem}

\bibliographystyle{alpha}
\bibliography{biblio}

\end{document}